\newcommand{\0}{\emptyset}
\newcommand{\om}{\omega}
\newcommand{\mc}{\mathcal}
\newcommand{\al}{\alpha}
\newcommand{\be}{\beta}
\newcommand{\ga}{\gamma}
\newcommand{\ka}{\kappa}
\newcommand{\lam}{\lambda}
\newcommand{\dom}{\mathrm{dom}}
\newcommand{\ran}{\mathrm{ran}}
\newcommand{\mf}{\mathfrak}
\newcommand{\mrm}{\mathrm}
\newcommand{\mbb}{\mathbb}
\newcommand{\acal}{\mathcal{A}}
\newcommand{\ical}{\mathcal{I}}
\newcommand{\eps}{\varepsilon}
\newcommand{\PP}{\mathbb{P}}
\newcommand{\fv}{\to}
\newcommand{\vd}{\Vdash}
\newtheorem*{claim}{Claim}
\newtheorem{thm}{Theorem}[section]
\newtheorem{prop}[thm]{Proposition}
\newtheorem{cor}[thm]{Corollary}
\newtheorem{fact}[thm]{Fact}
\theoremstyle{definition}
\newtheorem{que}[thm]{Question}
\newtheorem{df}[thm]{Definition}
\newtheorem{Def}[thm]{Definition}
\newtheorem{exa}[thm]{Example}
\newtheorem{rem}[thm]{Remark}
\title{Almost disjoint refinements and mixing reals}
\author{Barnab\'as Farkas}
\address{Kurt G\"odel Research Center for Mathematical Logic, Vienna, Austria}
\email{barnabasfarkas@gmail.com}
\author{Yurii Khomskii}
\address{University of Hamburg, Hamburg, Germany}
\email{yurii@deds.nl}
\author{Zolt\'an Vidny\'anszky}
\address{Alfr\'ed R\'enyi Institute of Mathematics, Budapest, Hungary}
\email{vidnyanszky.zoltan@renyi.mta.hu}
\thanks{The first author was supported by the Austrian
Science Fund (FWF) grant no. P25671, and the Hungarian National Foundation for Scientific Research grant nos. 83726 and 77476. The second author was supported by the Austrian
Science Fund (FWF) grant no. P25748. The third author is partially supported by the
Hungarian Scientific Foundation grants no.~104178 and no.~113047.}
\subjclass[2010]{03E05,03E15,03E35}
\keywords{analytic ideal, coanalytic ideal, almost disjoint family, almost disjoint refinement, Mansfield-Solovay Theorem, mixing real, meager ideal}
\def\Ubf#1{{\baselineskip=0pt\vtop{\hbox{$#1$}\hbox{$\sim$}}}{}}
\begin{document}
\begin{abstract} We investigate families of subsets of $\om$ with almost disjoint refinements in the classical case as well as with respect to given ideals on $\om$. More precisely, we study the following topics and questions:

1) Examples of projective ideals.

2) We prove the following generalization of a result due to J. Brendle:
If $V\subseteq W$ are transitive models, $\om_1^W\subseteq V$, $\mc{P}(\om)\cap V\ne\mc{P}(\om)\cap W$, and $\mc{I}$ is an analytic or coanalytic ideal coded in $V$, then there is an $\mc{I}$-almost disjoint refinement ($\mc{I}$-ADR) of $\mc{I}^+\cap V$ in $W$, that is, a family $\{A_X:X\in\mc{I}^+\cap V\}\in W$ such that (i) $A_X\subseteq X$, $A_X\in \mc{I}^+$ for every $X$ and (ii) $A_X\cap A_Y\in\mc{I}$ for every distinct $X$ and $Y$.

3) The existence of perfect $\mc{I}$-almost disjoint ($\mc{I}$-AD) families; and the existence of a ``nice'' ideal $\mc{I}$ on $\om$ with the property: Every $\mc{I}$-AD family is countable but $\mc{I}$ is nowhere maximal.

4) The existence of $(\mc{I},\mrm{Fin})$-almost disjoint refinements of families of $\mc{I}$-positive sets in the case of everywhere meager (e.g. analytic or coanalytic) ideals. We show that under Martin's Axiom if $\mc{I}$ is an everywhere meager ideal and $\mc{H}\subseteq\mc{I}^+$ with $|\mc{H}|<\mf{c}$, then $\mc{H}$ has an $(\mc{I},\mrm{Fin})$-ADR, that is, a family $\{A_H:H\in \mc{H}\}$ such that (i) $A_H\subseteq H$, $A_H\in\mc{I}^+$ for every $H$ and (ii) $A_{H_0}\cap A_{H_1}$ is finite for every distinct $H_0,H_1\in\mc{H}$.

5) Connections between classical properties of forcing notions and adding mixing reals (and mixing injections), that is, a (one-to-one) function $f:\om\to\om$ such that $|f[X]\cap Y|=\om$ for every $X,Y\in [\om]^\om\cap V$. This property is relevant concerning almost disjoint refinements because it is very easy to find an almost disjoint refinement of $[\om]^\om\cap V$ in every extension $V\subseteq W$ containing a mixing injection over $V$.
\end{abstract}

\maketitle

\section{Introduction}
Let us begin with our motivations which led us to work on almost disjoint refinements and their generalizations. First of all, the following easy fact seems to be somewhat surprising (see also Proposition  \ref{refgen}):

\begin{fact}
If $\mc{H}\subseteq [\om]^\om(=\{X\subseteq\om:|X|=\om\})$ is of size $<\mf{c}$, then $\mc{H}$ has an \emph{almost-disjoint refinement} $\{A_H:H\in\mc{H}\}$, that is, (i) $A_H\in [H]^\om$ for every $H\in\mc{H}$ and (ii) $|A_H\cap A_K|<\om$ for every $H\ne K$ from $\mc{H}$.
\end{fact}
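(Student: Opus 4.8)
\medskip

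The plan is to enumerate $\mc H=\{H_\al:\al<\ka\}$ with $\ka=|\mc H|<\mf c$ and to build the sets $A_{H_\al}$ by transfinite recursion on $\al$, keeping the following invariant: $\{A_{H_\be}:\be<\al\}$ is almost disjoint, and for every $\ga$ with $\al\le\ga<\ka$ I have fixed an infinite ``reservoir'' $D_\ga\sbq H_\ga$ which is almost disjoint from every $A_{H_\be}$, $\be<\al$ (initially $D_\ga:=H_\ga$). The only non-routine ingredient is the elementary fact that \emph{if $Y$ is infinite, $\{d_\xi:\xi<\mf c\}$ is an almost disjoint family of infinite subsets of $Y$, and $\{F_i:i\in I\}$ is a family of infinite subsets of $Y$ with $|I|<\mf c$, then some $d_\xi$ satisfies $F_i\not\sbq^* d_\xi$ for all $i$}: if $F\sbq^* d_\xi$ and $F\sbq^* d_{\xi'}$ with $\xi\ne\xi'$ then $d_\xi\cap d_{\xi'}\supseteq^* F$ is infinite, contradicting almost disjointness, so each $F_i$ sits below at most one $d_\xi$ modulo finite and fewer than $\mf c$ of the $d_\xi$ are excluded.

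At a successor stage $\al$ I work inside the current reservoir $D_\al$. Fix an almost disjoint family $\{d_\xi:\xi<\mf c\}$ of infinite subsets of $D_\al$ and apply the fact with the (fewer than $\mf c$) constraints $F_\ga:=D_\ga\cap D_\al$ ranging over those $\ga>\al$ for which $D_\ga\cap D_\al$ is infinite; this gives an infinite $A_{H_\al}\sbq D_\al$ with $D_\ga\cap D_\al\not\sbq^* A_{H_\al}$ for all such $\ga$. Then $A_{H_\al}\sbq D_\al\sbq H_\al$ is infinite; being a subset of $D_\al$ it is almost disjoint from every $A_{H_\be}$ with $\be<\al$; and for each $\ga>\al$ the set $D_\ga\bs A_{H_\al}$ is still infinite (immediate when $D_\ga\cap D_\al$ is finite, and a consequence of the choice of $A_{H_\al}$ otherwise), is contained in $H_\ga$, and is almost disjoint from $\{A_{H_\be}:\be\le\al\}$, so it becomes the updated reservoir for $\ga$. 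The invariant is restored, and once the recursion is finished $\{A_{H_\al}:\al<\ka\}$ is an almost disjoint refinement of $\mc H$.

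The genuinely delicate point, which I expect to be the main obstacle, is the limit stages. At a limit $\delta$ the reservoir of a fixed $\ga\ge\delta$ has been thinned along the successor steps below $\delta$, producing a $\sbq$-decreasing chain of infinite sets, and to continue I need an infinite pseudo-intersection of that chain (it will then again be almost disjoint from all $A_{H_\be}$, $\be<\delta$). When $\on{cf}(\delta)=\om$ a diagonalization supplies one, so the whole recursion goes through with no extra work as soon as $\ka\le\A_1$. For $\on{cf}(\delta)>\om$ this is not automatic, and here the argument needs more care: one must use the slack in the successor step --- and, if necessary, an induction on $\ka$ with a strengthened invariant --- to guarantee that each reservoir $D_\ga$ is thinned only on a closed bounded set of stages, so that it stabilizes before every limit of uncountable cofinality; with that in hand the limit stages become trivial and the construction closes.

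\medskip
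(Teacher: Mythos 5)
Your successor step and your treatment of limits of cofinality $\om$ are both correct, so the construction does go through for $\ka\le\A_1$. But the paragraph on limits of uncountable cofinality is a genuine gap, not just a missing detail, and the remedy you sketch cannot be made to work.

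The problem is structural. At a limit $\delta$ with $\on{cf}(\delta)>\om$ you need an infinite pseudo-intersection of a decreasing chain of infinite subsets of $H_\ga$ of length $\on{cf}(\delta)$. Such a pseudo-intersection exists when $\on{cf}(\delta)<\mf{t}$, but the Fact must be proved for every $\ka<\mf{c}$, and consistently $\mf{t}<\mf{c}$; so this is a real obstruction. Your proposed fix --- arrange that each reservoir $D_\ga$ is thinned only on a bounded (below $\delta$) set of stages, so that the chain stabilizes --- is not something the successor step gives you room to guarantee. For $\al<\ga$, the set $A_{H_\al}$ lives inside $H_\al$, and $H_\al\cap H_\ga$ can be infinite for unboundedly many $\al<\ga$; you control $D_\ga\cap D_\al\not\sbq^* A_{H_\al}$, but you cannot in general also force $A_{H_\al}\cap D_\ga$ to be finite (if $D_\ga\supseteq^* D_\al$, say, then \emph{every} infinite $A_{H_\al}\subseteq D_\al$ meets $D_\ga$ infinitely). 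So the reservoir may genuinely shrink unboundedly often below $\delta$, and no strengthening of the invariant that I can see repairs this within your framework.

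The paper's own argument (the proof of Proposition \ref{refgen} specialized to $\mc{I}=\mrm{Fin}$) avoids the limit problem by a different decomposition. It fixes a single AD family $\mc{A}$ of size $\ka^+$ on one set $H_0$, computes for each index $\be$ the trace $T_\be=\{A\in\mc{A}:|A\cap H_\be|=\om\}$, and splits the indices by whether $|T_\be|>\ka$. For the large-trace indices it assigns \emph{distinct} members $A_{\xi_\al}$ of $\mc{A}$ avoiding $\bigcup\{T_\be:|T_\be|\le\ka\}$ (a set of size $\le\ka<\ka^+$), setting $E_\al=H_\al\cap A_{\xi_\al}$. The point is that then $E_\al\cap H_\be$ is \emph{finite} for every small-trace $\be$, so the remaining indices can be attacked in a fresh round with no interference from the already-chosen sets; the invariant carried from round to round is ``$E_\al\cap H_\be$ finite for each handled $\al$ and each remaining $\be$'', and since this is a pointwise condition it passes through limit rounds automatically. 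Each round disposes of at least the currently minimal index, so the meta-recursion terminates. This is the idea your proof is missing: rather than maintaining shrinking reservoirs (which forces you into tower-number territory at limits), maintain only the finite-intersection condition, which is limit-stable for free.
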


The following theorem due to B. Balcar and P. Vojt\'a\v{s} is probably the most well-know general result on the existence of almost-disjoint refinements.

\begin{thm} {\em (see \cite{BaVo})}
Every ultrafilter on $\om$ has an almost-disjoint refinement.
\end{thm}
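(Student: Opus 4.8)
The plan is first to reduce to a purely combinatorial ``density'' statement, then to invoke a base‑matrix‑type construction tailored to $\mc{U}$. We may assume $\mc{U}$ is free (the principal case is essentially the statement that $[\om]^\om$ has an almost disjoint refinement, which the construction below also covers). The reduction: \emph{it suffices to produce an almost disjoint family $\mc{A}$ on $\om$ such that for every $U\in\mc{U}$ the set $\{A\in\mc{A}:A\sub^* U\}$ has size $\mf{c}$.} Indeed, given such an $\mc{A}$, enumerate $\mc{U}=\{U_\xi:\xi<\mf{c}\}$ and pick by recursion $B_\xi\in\mc{A}$ with $B_\xi\sub^* U_\xi$ and $B_\xi\notin\{B_\eta:\eta<\xi\}$ (possible, as fewer than $\mf{c}$ members are forbidden); then $A_{U_\xi}:=B_\xi\cap U_\xi$ is an infinite subset of $U_\xi$, and for $\xi\ne\ze$ we have $A_{U_\xi}\cap A_{U_\ze}\sub B_\xi\cap B_\ze$ finite, so $\{A_U:U\in\mc{U}\}$ is the required $\mc{U}$-almost disjoint refinement.

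To build such an $\mc{A}$, note that no countable family can be dense in $\mc{P}(\om)/\mrm{fin}$, so $\mc{A}$ must be harvested from a tree with uncountably many nodes, built relative to $\mc{U}$. I would recursively construct a tree $\la A_s:s\in\mc{T}\ra$ of infinite subsets of $\om$ with $A_\0=\om$, where at each node $s$ the sets $\{A_t:t\text{ an immediate successor of }s\}$ form a maximal almost disjoint family on $A_s$. The free ultrafilter enters through a diagonalization step: if $Y\in[\om]^\om$ and $C_n\notin\mc{U}$ for all $n$, then, picking $a_0<a_1<\cdots$ with $a_k\in Y\cap\bigcap_{n\le k}(\om\setminus C_n)\in\mc{U}$, one gets an infinite $A\sub Y$ almost disjoint from every $C_n$; this lets one keep refining while controlling membership in $\mc{U}$. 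As in the base matrix theorem one arranges that the nodes are dense in $\mc{P}(\om)/\mrm{fin}$ --- so below each $U\in\mc{U}$ there sits a node --- and, crucially, that below each $U\in\mc{U}$ the tree ramifies into $\mf{c}$-many branches with infinite pseudo‑intersections. Taking those pseudo‑intersections yields $\mc{A}$: its members are pairwise almost disjoint because distinct branches terminate in incomparable, hence almost disjoint, nodes, and each is infinite by a fusion argument carried out along the branch.

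The step I expect to be the main obstacle is exactly the last requirement: guaranteeing, from a single tree, that each of the $\mf{c}$-many sets $U\in\mc{U}$ has $\mf{c}$-many branches below it whose pseudo‑intersections are infinite. A naive construction --- for instance letting the successors at each node split off the $\mc{U}$-positive part --- forces only one branch (a ``spine'') through each $U$, which is nowhere near enough; and for arbitrary infinite sets in place of $\mc{U}$ one genuinely cannot do better with a single almost disjoint family. So the real content is the base‑matrix machinery (or a careful $\mc{U}$-relative variant of it); everything else --- the reduction, the diagonalization lemma, the final recursion, the fusion --- is routine.
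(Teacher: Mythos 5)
The paper does not prove this theorem (it only cites \cite{BaVo}), so I evaluate your sketch on its own. Your opening reduction is in a reasonable spirit, but it over-constrains: since you form $A_{U_\xi}:=B_\xi\cap U_\xi$ at the end anyway, you only need $|B_\xi\cap U_\xi|=\om$, and so it would suffice to find an AD family $\mc{A}$ with $|\{A\in\mc{A}:|A\cap U|=\om\}|=\mf{c}$ for every $U\in\mc{U}$; demanding $A\sub^* U$ instead is a much stronger coherence requirement for which you give no existence argument. The decisive problem, however, is that the construction of $\mc{A}$---whichever version one aims for---is the entire content of the theorem, and you explicitly leave it undone. You correctly observe that a naive tree construction threads only one spine through each $U\in\mc{U}$, that one needs $\mf{c}$-fold ramification with infinite pseudo-intersections below every member of $\mc{U}$, and that this is where the difficulty lies; you then defer to ``base-matrix machinery, or a careful $\mc{U}$-relative variant of it.'' The Base Matrix Theorem of Balcar--Pelant--Simon produces a tree $\pi$-base for $\mc{P}(\om)/\mathrm{fin}$ with no reference to any prescribed ultrafilter, and the ``$\mc{U}$-relative variant'' you invoke is not a routine modification of it---it is precisely the new combinatorial work the theorem requires. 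Everything you label routine is indeed routine; the one step you label the obstacle is the proof.

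A secondary issue: your aside that the principal case ``is essentially the statement that $[\om]^\om$ has an almost disjoint refinement, which the construction below also covers'' is wrong on both counts. Principal ultrafilters contain finite sets, so they are simply excluded from the statement; and $[\om]^\om$ provably has \emph{no} almost disjoint refinement (given such a refinement $\{A_X:X\in[\om]^\om\}$, the map $X\mapsto A_X$ is injective and not surjective, so pick $Y$ with $A_Y\subsetneq Y$; then $A_{A_Y}$ is an infinite subset of $A_Y$, contradicting $|A_Y\cap A_{A_Y}|<\om$). You in fact concede the latter point yourself two sentences later. The moral is that the density condition on $\mc{A}$ must exploit that $\mc{U}$ is an ultrafilter in an essential way, and that exploitation is exactly the missing step.
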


B. Balcar and T. Paz\'ak, and independently J. Brendle proved the following theorem:
\begin{thm} (see \cite{BaPa}, \cite{Sou}) \label{togeneralize}
Assume that $V\subseteq W$ are transitive models and  $\mc{P}(\om)\cap V\ne\mc{P}(\om)\cap W$. Then $[\om]^\om\cap V$ has an almost-disjoint refinement in $W$ (where by {\em transitive model} we mean a transitive model of a ``large enough'' finite fragment of $\mathrm{ZFC}$).
\end{thm}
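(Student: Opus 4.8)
The plan is to reduce the statement to the existence of a single new real and then to extract an almost-disjoint refinement of $[\om]^\om\cap V$ from it by an explicit combinatorial recipe, whose correctness is guaranteed by an absoluteness argument.

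Work throughout in $W$ and pick $r\in\mc P(\om)^W\setminus V$; such an $r$ exists by hypothesis. As a preliminary observation, if $|[\om]^\om\cap V|<\mf c$ holds in $W$ then the Fact above, applied inside $W$, already produces an almost-disjoint refinement in $W$; so the substantial case is the one in which $V$ and $W$ have essentially the same continuum --- exactly the situation of a ``small'' extension --- and there the single new real must do all the work. The core assertion is:

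\begin{claim}
If $r\in\mc P(\om)^W\setminus V$, then $W$ contains a family $\{B_H:H\in[\om]^\om\cap V\}$ with $B_H\in[H]^\om$ for every $H$ and $|B_H\cap B_K|<\om$ for all distinct $H,K$.
\end{claim}

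This obviously finishes the proof. To prove the Claim I would fix, once and for all, in $V$: a bijection identifying $\om$ with $\om^\om$ (so that $r$ is viewed as a function $\om\to\om$), and, for each $H\in[\om]^\om\cap V$, a partition $H=\bigsqcup_n I^H_n$ into consecutive finite blocks with $|I^H_n|\to\infty$ as fast as convenient, together with the increasing enumeration of each $I^H_n$. The recipe defines $B_H\cap I^H_n$ by reading off, from $r$ restricted to a window that may depend on $H$ and $n$, a nonempty subset of $I^H_n$, and puts $B_H=\bigsqcup_n(B_H\cap I^H_n)$. The parameters must be chosen so that two things hold simultaneously: (i) every $B_H$ is infinite; (ii) $B_H\cap B_K$ is finite whenever $H\ne K$.

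For (i) and (ii) alike the mechanism is the same: arrange the recipe so that, for each choice of parameters from $V$ (a single $H$ for (i), a pair $H\ne K$ for (ii)), the set $F$ of those $s\in\om^\om$ for which the recipe \emph{fails} is a set coded in $V$ all of whose members are old reals, so that $r\notin V$ forces success. In the $\mrm{Fin}$ case one wants $F$ to be a $V$-coded \emph{countable} Borel set (equivalently, by the perfect set property, one with no perfect subset); by absoluteness such an $F$ satisfies $F\subseteq V$, whence $r\notin F$. For the analytic and coanalytic refinements considered later in the paper one instead uses that a new member of a $V$-coded $\Sigma^1_1$ set --- or, granting $\om_1^W\subseteq V$, of a $V$-coded $\Sigma^1_2$ set --- yields a $V$-coded perfect subset, i.e. one invokes the Mansfield--Solovay theorem, and runs an analogous argument.

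The delicate point is the design of the recipe needed for (ii). A rigid block structure does not work: two ground-model sets $H\subsetneq K$ differing in only finitely many places have almost aligned blocks, so ``one element per block'' selections coincide cofinally and $B_H\cap B_K$ is infinite for \emph{every} $s$, i.e. $F=\om^\om$. Hence the selection on $H$ must genuinely depend on the fine placement of the elements of $H$ within each block, and must be thin enough that $F$ remains small. Producing such a recipe is really the only obstacle; once it is in place, (i) and (ii) follow by the routine absoluteness indicated above, and the theorem follows from the Claim together with $\mc P(\om)^W\ne\mc P(\om)^V$.
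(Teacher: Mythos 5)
Your proposal stops exactly where the theorem begins. The entire argument is delegated to a combinatorial ``recipe'' that assigns to each $H\in[\om]^\om\cap V$ and each new real $r$ a set $B_H\subseteq H$, in such a way that every failure set $F_{H,K}=\{s\in\om^\om:|B_H(s)\cap B_K(s)|=\om\}$ (and every $F_H=\{s:|B_H(s)|<\om\}$) is a $V$-coded Borel set all of whose members lie in $V$. You never define this recipe; you explicitly say that ``producing such a recipe is really the only obstacle,'' and that obstacle \emph{is} the theorem. The absoluteness scaffolding you set up around it (a $V$-coded Borel set with no perfect subset has no new members) is correct and routine, but it has nothing to act on without the construction. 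Worse, the one concrete observation you do make about the recipe --- that rigid block selections force $F_{H,K}=\om^\om$ when $H=^*K$ --- shows the difficulty is real and not merely notational: the selection on $H$ must react to fine structure of $H$ while simultaneously keeping \emph{all} $\mf c$-many failure sets countable, and it is not at all clear that any single uniformly-defined map $r\mapsto (B_H(r))_{H}$ can do this. As it stands, the Claim is unproved and the proposal contains no proof.

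For contrast, the paper's own route (it cites \cite{BaPa}, \cite{Sou} for Theorem \ref{togeneralize} and proves the generalization, Theorem \ref{main}, whose proof specializes cleanly to the $\mrm{Fin}$ case without the $\om_1^W\sbq V$ hypothesis, since all sets involved become Borel and Mostowski absoluteness suffices) is not a single-real decoding at all but a transfinite recursion carried out inside $W$. One fixes in $V$ a perfect AD family $\mc{A}_X$ on each $X$, sets $B(X,Y)=\{A\in\mc{A}_X:|A\cap Y|=\om\}$, enumerates $[\om]^\om\cap V$ in $W$ as $\{X_\al:\al<\ka\}$ where $\ka=|\mf{c}^V|^W$, and at stage $\al$ takes the least $\ga_\al$ with $B(X_{\ga_\al},X_\al)$ containing a perfect set, picks a fresh $B_\al\in B(X_{\ga_\al},X_\al)\setminus(V\cup\{B_\xi:\xi<\al\})$ (available because a $V$-coded perfect set acquires $\ka$-many new elements), and puts $A_\al=X_\al\cap B_\al$. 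Almost-disjointness then splits into the case $\ga_\al=\ga_\be$ (both $B$'s lie in one AD family) and the case $\ga_\al<\ga_\be$, where minimality of $\ga_\be$ forces $B(X_{\ga_\al},X_\be)$ to have no perfect subset, hence to consist only of ground-model reals, hence $B_\al\notin B(X_{\ga_\al},X_\be)$ and $A_\al\cap A_\be\sbq B_\al\cap X_\be$ is finite. So the refinement is assembled from $\ka$-many new reals drawn one at a time from $V$-coded perfect sets, not read off uniformly from one $r$. If you want to pursue your approach, the honest formulation of the missing lemma is: there is a Borel map $\Phi:\om^\om\times[\om]^\om\to[\om]^\om$ coded in $V$ with $\Phi(s,H)\sbq H$ such that for all $H\ne K$ in $V$ the set $\{s:|\Phi(s,H)\cap\Phi(s,K)|=\om\ \text{or}\ \Phi(s,H)\ \text{finite}\}$ is countable; until such a $\Phi$ is exhibited, the proposal does not prove the theorem.
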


One of our main results is a generalization of this theorem in the context of ``nice'' ideals on $\om$, that is, we change the notion of {\em smallness} in the setting above by replacing {\em finite} with {\em element of an ideal $\mc{I}$}.

In order to formulate our generalization and to give a setting to our other related results, we have to introduce some notations and the appropriate versions of the classical notions.

Let $\mc{I}$ be an ideal on a countably infinite set $X$. We always assume that $[X]^{<\om}=\{Y\subseteq X:|Y|<\om\}\subseteq\mc{I}$ and $X\notin \mc{I}$. Let us denote by $\mc{I}^+=\mc{P}(X)\setminus \mc{I}$ the family of {\em $\mc{I}$-positive} sets, and by $\mc{I}^*=\{X\setminus A:A\in\mc{I}\}$ the {\em dual filter} of $\mc{I}$. If $Y\in\mc{I}^+$ then let $\mc{I}\upharpoonright Y=\{A\in\mc{I}:A\subseteq Y\}=\{B\cap Y:B\in\mc{I}\}$ be the {\em restriction} of $\mc{I}$ to $Y$ (an ideal on $Y$). If $X$ is clear from the contex, then the ideal of finite subsets of $X$ will be denoted by $\mathrm{Fin}$.

\begin{df}
 We say that a non-empty family $\acal\subseteq\ical^+$ is {\em $\ical$-almost-disjoint}
($\ical$-AD) if $A\cap B\in\ical$ for every distinct $A,B\in
\acal$. A family $\mc{A}\subseteq\mc{I}^+$ is $(\mc{I},\mrm{Fin})$-AD if $|A\cap B|<\om$ for every distinct $A,B\in\mc{A}$.
\end{df}
\begin{df}

Let $\mc{H}\subseteq\mc{I}^+$. We say that  a family $\mc{A}=\{A_H:H\in\mc{H}\}$ is an {\em $\mc{I}$-AD refinement} ($\mc{I}$-ADR) of $\mc{H}$  if (i) $A_H\subseteq H$, $A_H\in\mc{I}^+$ for every $H$, and (ii) $A_{H_0}\cap A_{H_1}\in\mc{I}$ for every distinct  $H_0,H_1 \in\mc{H}$ (in paticular, $\mc{A}$ is an $\mc{I}$-AD family). If $\mc{I}=\mathrm{Fin}$  we simply say AD-refinement (ADR).

We say that a family $\mc{A}=\{A_H:H\in\mc{H}\}$ is an {\em $(\mc{I},\mrm{Fin})$-AD refinement} ($(\mc{I},\mrm{Fin})$-ADR) of $\mc{H}$  if (i) holds and (ii)' $|A_{H_0}\cap A_{H_1}|<\om$ for every distinct $H_0,H_1\in\mc{H}$.
\end{df}

Notice that an ideal on a countably infinite $X$ can be regarded as a subset of the Polish space $2^X\simeq 2^\omega$ using a bijection between $X$ and $\omega$. Thus, it makes sense to talk about Borel, analytic, etc ideals and about certain descriptive properties of ideals, such as the Baire property or meagerness (it is easy to see that these properties do not depend on the choice of the bijection). In the past two decades the study of certain definable (e.g. Borel, analytic, coanalytic, etc.) ideals has become a central topic in set theory. It turned out that they play an important role in combinatorial set theory, and in the theory of cardinal invariants of the continuum as well as the theory of forcing (see e.g. \cite{Mazur}, \cite{So}, \cite{Farah}, \cite{Hrusak} and many other publications).

Now we can formulate our generalization of Theorem \ref{togeneralize}:
\begin{thm}\label{main}
Assume that $V\subseteq W$ are transitive models, $\om_1^W\subseteq V$, $\mc{P}(\om)\cap V\ne\mc{P}(\om)\cap W$, and $\mc{I}$ is an analytic or coanalytic ideal coded in $V$. Then there is an $\mc{I}$-ADR of $\mc{I}^+\cap V$ in $W$.
\end{thm}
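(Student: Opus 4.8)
The plan is to reduce the theorem to the classical case (Theorem \ref{togeneralize}) whenever possible, and to handle the obstructions using the structure of analytic/coanalytic ideals together with the hypothesis $\om_1^W\sbq V$.

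First I would fix an analytic or coanalytic ideal $\mc{I}$ coded in $V$ and a set $Z\in\mc{P}(\om)\cap W\setminus V$. The key dichotomy is whether $\mc{I}$ is somewhere ``tall and well-behaved'' versus somewhere trivial. Concretely, if there is $Y\in\mc{I}^+\cap V$ such that $\mc{I}\!\restriction\! Y=\mrm{Fin}\!\restriction\! Y$, then on $Y$ we can already apply Theorem \ref{togeneralize} to get an AD (hence $\mc{I}$-AD) refinement; the real work is isolating a single ``master'' positive set inside which one builds the refinement and then transporting it to all of $\mc{I}^+\cap V$. So the main case is when $\mc{I}$ is everywhere ``large'': for every $Y\in\mc{I}^+\cap V$ there is an infinite $\mc{I}$-AD family below $Y$ in $V$, equivalently $\mc{I}$ is nowhere maximal on $V$-sets (using $\om_1^W\sbq V$ to make ``perfect $\mc{I}$-AD family'' arguments absolute, via the Mansfield–Solovay theorem — this is exactly why the keyword list mentions it).

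The core construction: using the code for $\mc{I}$ in $V$, build in $V$ a tree-like scheme $\{Y_s:s\in 2^{<\om}\}\sbq\mc{I}^+$ with $Y_{s^\frown i}\sbq Y_s$, $Y_{s^\frown 0}\cap Y_{s^\frown 1}\in\mc{I}$, and with the branches refining a fixed enumeration-independent skeleton, so that for each branch $x\in 2^\om$ the pseudo-intersection-type set $A_x$ determined by $x$ (along $\{Y_{x\restriction n}\}$) is $\mc{I}$-positive. This last positivity is where the analytic/coanalytic hypothesis is essential: for analytic $\mc{I}$ one uses that $\mc{I}$ is a $\bf\Si^1_1$ set and hence (by a Solovay-style argument, or by the fact that analytic ideals are meager and have nice norms) one can choose the $Y_s$ with enough ``fusion room'' that the diagonal set avoids $\mc{I}$; for coanalytic $\mc{I}$ one dualizes, working with $\mc{I}^*$ as a $\bf\Si^1_1$ filter and invoking the Mansfield–Solovay perfect-set theorem to find a perfect $\mc{I}$-AD family in $V$ whenever $\mc{I}$ is nowhere maximal. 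Then, given $Z\in W\setminus V$, the branch $x_Z\in 2^\om\cap W$ coded by $Z$ is not in $V$, so the sets $\{A_{(x_Z)\restriction n \,\frown\, Z(n)}\}$ (or a similar diagonal indexed by $Z$) are genuinely new and pairwise $\mc{I}$-almost disjoint in $W$; ranging over a perfect antichain of branches gives $\mf{c}^W$-many pairwise $\mc{I}$-AD positive sets, and one assigns them injectively to the members of $\mc{I}^+\cap V$, taking care that $A_X\sbq X$. The usual trick for the containment $A_X\sbq X$ is to first partition each $X$ along the tree scheme, or to intersect: fix a single almost disjoint family $\{B_X:X\in\mc{I}^+\cap V\}$ of $\mc{I}$-positive sets in $W$ obtained from $Z$, then for each $X$ let $A_X=B_X\cap X'$ where $X'$ is an $\mc{I}$-positive subset of $X$ coming from the $V$-side bookkeeping, checking that the intersection stays $\mc{I}$-positive (again using nowhere-maximality and $\om_1^W\sbq V$ for absoluteness of the relevant $\bf\Si^1_1$ statements).

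The hard part will be the positivity of the diagonal sets $A_x$ — i.e. guaranteeing $A_x\in\mc{I}^+$ in $W$ for the new branches — and doing this uniformly for analytic and coanalytic $\mc{I}$ without a common structure theory. I expect to split into the two definability classes: for analytic $\mc{I}$, exploit that $\mc{I}$ is meager (Talagrand/Jalali-Naini) to run a Cohen-flavored fusion so the diagonal is ``generic enough'' to be positive; for coanalytic $\mc{I}$, push the problem to the dual $\bf\Si^1_1$ filter and use Mansfield–Solovay to produce, inside $V$, a perfect set of pairwise $\mc{I}$-almost disjoint positive sets refining any prescribed $X$, after which the new real $Z$ merely selects a branch and no further positivity argument in $W$ is needed beyond $\bf\Si^1_2$-absoluteness, which holds because $\om_1^W\sbq V$. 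Secondary technical points — absoluteness of ``$A\in\mc{I}^+$'' (a $\bf\Si^1_1$ or $\bf\Pi^1_1$ statement, hence upward/downward absolute appropriately), and the bookkeeping that simultaneously achieves $A_X\sbq X$ and pairwise $\mc{I}$-almost disjointness — are routine but must be arranged in the right order.
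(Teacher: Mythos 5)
Your proposal assembles the right toolbox --- everywhere-meagerness of analytic/coanalytic ideals giving perfect $\mc{I}$-AD families in $V$, Mansfield--Solovay, and $\Sigma^1_2$-absoluteness from $\om_1^W\sbq V$ --- but it leaves the central coordination problem unsolved, and this is where the paper's argument actually lives.

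The difficulty is not to produce, in $W$, a large perfect $\mc{I}$-AD family (that much follows from Corollary~\ref{c-sAD} applied in $V$ plus the fact that a $V$-coded perfect set acquires $|\mf{c}^V|^W$-many new elements in $W$). The difficulty is to make the assignment $X\mapsto A_X$ simultaneously satisfy $A_X\sbq X$, $A_X\in\mc{I}^+$, and pairwise $\mc{I}$-almost-disjointness across \emph{all} of $\mc{I}^+\cap V$. Both of your proposed mechanisms fail here. Fixing a single perfect $\mc{I}$-AD family $\{B_x\}$ and setting $A_X=B_X\cap X'$ does not obviously keep $A_X$ in $\mc{I}^+$: there is no reason a generic member of a fixed AD family intersects an arbitrary $X\in\mc{I}^+$ in an $\mc{I}$-positive set, and you only say this requires ``checking'' and ``bookkeeping'' without indicating why it can be arranged. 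The tree-scheme variant has the same issue once you ask for the refinement condition $A_X\sbq X$; a single scheme lives below one set $Y_\emptyset$, and a new branch $x_Z$ gives one positive set, not a coherent assignment to all of $\mc{I}^+\cap V$.

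The paper resolves this with a recursion you do not reproduce. Fix, in $V$, perfect $\mc{I}$-AD families $\mc{A}_X$ on \emph{each} $X\in\mc{I}^+$, and for $X,Y\in\mc{I}^+$ let $B(X,Y)=\{A\in\mc{A}_X: A\cap Y\in\mc{I}^+\}$, an analytic or coanalytic set. Enumerate $\mc{I}^+\cap V=\{X_\al:\al<\ka\}$ in $W$; at stage $\al$, take $\ga_\al$ minimal so that $B(X_{\ga_\al},X_\al)$ contains a perfect set (note $\ga_\al\le\al$ since $B(X_\al,X_\al)=\mc{A}_{X_\al}$), pick a \emph{new} element $B_\al\in B(X_{\ga_\al},X_\al)\setminus(V\cup\{B_\xi:\xi<\al\})$ from that perfect set, and set $A_\al=X_\al\cap B_\al\in\mc{I}^+$. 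Almost-disjointness then splits into two cases: if $\ga_\al=\ga_\be$, the $B$'s are distinct members of one $\mc{I}$-AD family; if $\ga_\al<\ga_\be$, minimality of $\ga_\be$ forces $B(X_{\ga_\al},X_\be)$ to have no perfect subset, Mansfield--Solovay then puts it inside $L[r]$, and $\om_1^W\sbq V$ makes $L[r]\cap\mc{P}(\om)$ the same in $V$ and $W$, so the thin set is unchanged and the new real $B_\al$ cannot lie in it --- i.e. $B_\al\cap X_\be\in\mc{I}$, giving $A_\al\cap A_\be\in\mc{I}$. This ``minimal $\ga$ with a perfect $B(\cdot,\cdot)$'' device is the missing idea; without it, your assignment has no reason to be $\mc{I}$-AD or to refine. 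Also, your initial dichotomy (somewhere $\mrm{Fin}$-like vs.\ everywhere large) and the split into separate analytic and coanalytic arguments are both unnecessary once the above recursion is in place: the only place the complexity class matters is in determining whether $B(X,Y)$ is $\Ubf{\Sigma}^1_1$ or $\Ubf{\Pi}^1_1$, and the perfect-set and Mansfield--Solovay absoluteness claims work uniformly in either case.
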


We say that an ideal $\mc{I}$ on $X$ (where $|X|=\om$) is {\em everywhere meager} if $\mc{I}\upharpoonright Y$ is meager in $\mc{P}(Y)$ for every $Y\in\mc{I}^+$. In particular, analytic and coanalytic ideals are everywhere meager because their restrictions are also analytic and coanalytic, respectively, hence have the Baire property, and we can apply the following well-known  characterisation theorem (due to Sierpi\'{n}ski (1)$\leftrightarrow$(2), and Talagrand (2)$\leftrightarrow$(3), for the proofs see e.g.  \cite[Thm 4.1.1-2]{BaJu}).

\begin{thm}\label{t-char}
Let $\mc{I}$ be an ideal on $\om$. Then the following are equivalent: (1) $\mc{I}$ has the Baire property, (2) $\mc{I}$ is meager, and (3) there is a partition $\{P_n:n\in\om\}$ of $\om$ into finite sets such that $\{n\in\om:P_n\subseteq A\}$ is finite for each $A\in\mc{I}$.
\end{thm}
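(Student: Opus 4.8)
The plan is to prove the two chains $(2)\Rightarrow(1)\Rightarrow(2)$ and $(2)\Rightarrow(3)\Rightarrow(2)$, which together give all the equivalences. Three of these four implications are routine. For $(2)\Rightarrow(1)$ there is nothing to do, since every meager set has the Baire property. For $(1)\Rightarrow(2)$ I would use that, identifying $\mc{P}(\om)$ with the Polish group $(2^\om,\tr)$, the ideal $\mc{I}$ is a subgroup --- $\0\in\mc{I}$, and $A\tr B\sbq A\cup B\in\mc{I}$ for $A,B\in\mc{I}$ --- so by the classical fact that a subgroup of a Polish group with the Baire property is either meager or clopen, $\mc{I}$ is meager or clopen; clopen is impossible, for then $\mc{I}$ would contain a basic neighbourhood $\{A:A\cap n=\0\}$ of $\0$, whence $\om\bs n\in\mc{I}$ and, since also $n\in\mc{I}$, $\om=n\cup(\om\bs n)\in\mc{I}$, a contradiction. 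For $(3)\Rightarrow(2)$ I would take the partition $\{P_n:n\in\om\}$ and consider $D=\{A\sbq\om:P_n\sbq A$ for infinitely many $n\}$; each $\{A:P_n\sbq A\}$ is clopen (as $P_n$ is finite) and only finitely many $P_n$ meet any fixed finite set, so $D$ is a dense $G_\delta$, hence comeager, and $(3)$ says $D\cap\mc{I}=\0$, so $\mc{I}$ is meager.

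The real work is $(2)\Rightarrow(3)$. Assume $\mc{I}$ is meager and fix an increasing sequence $(F_k)_{k\in\om}$ of closed nowhere dense subsets of $\mc{P}(\om)$ with $\mc{I}\sbq\bigcup_k F_k$. I would recursively build $0=m_0<m_1<\cdots$ and, writing $I_k=[m_k,m_{k+1})$, finite sets $\tau_k\sbq I_k$, finally setting $z=\bigcup_k\tau_k$. At stage $k$ the point is that the image of $F_k$ under the projection $A\mapsto A\cap[m_k,\om)$ from $\mc{P}(\om)\cong\mc{P}(m_k)\times\mc{P}([m_k,\om))$ onto $\mc{P}([m_k,\om))$ is again closed and nowhere dense: it is the union of the $2^{m_k}$ sections of $F_k$, each a closed nowhere dense subset of $\mc{P}([m_k,\om))$, and a finite union of such is nowhere dense. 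Hence I can choose $m_{k+1}>m_k$ and $\tau_k\sbq I_k$ so that the basic clopen set $\{B\in\mc{P}([m_k,\om)):B\cap I_k=\tau_k\}$ is disjoint from that image; equivalently, no $y\in F_k$ satisfies $y\cap I_k=\tau_k$. Then $\{I_k:k\in\om\}$ is a partition of $\om$ into finite intervals, and for every $A\in\mc{I}$, picking $j$ with $A\in F_j$, one gets $A\cap I_k\ne\tau_k$ for all $k\ge j$; i.e.\ $A$ eventually differs from $z$ on every block $I_k$. (This is precisely the standard proof of Bartoszy\'nski's combinatorial characterisation of meagerness.)

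It then remains only to check that this partition $\{I_k:k\in\om\}$ is as required in $(3)$. Fix $A\in\mc{I}$ and suppose, toward a contradiction, that $S=\{k:I_k\sbq A\}$ is infinite. Set $A'=A\bs\bigcup_{k\in S}(I_k\bs\tau_k)$. Then $A'\sbq A$, so $A'\in\mc{I}$ because $\mc{I}$ is an ideal; but for every $k\in S$ we have $A\cap I_k=I_k$, hence $A'\cap I_k=I_k\bs(I_k\bs\tau_k)=\tau_k$. Thus $A'\cap I_k=\tau_k$ for infinitely many $k$, contradicting the conclusion of the previous paragraph applied to $A'$. Hence $\{k:I_k\sbq A\}$ is finite for every $A\in\mc{I}$, as wanted.

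I expect the whole difficulty to lie in $(2)\Rightarrow(3)$. Its two ingredients are the ``block'' reformulation of meagerness (eventual disagreement with one fixed real along a fixed interval partition), and --- the decisive trick --- the downward modification $A\mapsto A'$, which exploits that $\mc{I}$ is closed downward to upgrade the weak statement ``$I_k\sbq A$'' into the exact block agreement $A'\cap I_k=\tau_k$ that the characterisation refutes for members of $\mc{I}$.
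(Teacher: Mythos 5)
Your proof is correct and is essentially the standard argument: the paper does not prove the theorem but cites \cite[Thm 4.1.1--2]{BaJu}, and your route --- Pettis' theorem for the Baire-measurable subgroup $(\mc{I},\tr)$ to get $(1)\Rightarrow(2)$, and Bartoszy\'nski's interval-block characterisation of meager subsets of $2^\om$ together with the downward-closure modification $A\mapsto A'$ to get $(2)\Rightarrow(3)$ --- is precisely how Talagrand's theorem is proved in that reference. All four implications check out, including the nowhere-density of the projection $\pi(F_k)$ (finite union of nowhere dense sections) and the verification that $\{k:I_k\sbq A\}$ must be finite.
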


From now on, when working with partitions of a set, we always assume that every element of the partition is nonempty. From this theorem we can also deduce the following important corollary:

\begin{cor}\label{c-sAD}
If $\mc{I}$ is a meager ideal, then there is a perfect $(\mc{I},\mrm{Fin})$-AD family. In particular, if $\mc{I}$ is everywhere meager, then  there are perfect $(\mc{I},\mrm{Fin})$-AD families on every $X\in \mc{I}^+$.
\end{cor}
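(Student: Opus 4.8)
The plan is to reduce everything to Theorem~\ref{t-char} and then build a perfect tree by hand. Fix a meager ideal $\mc{I}$ on $\om$ and, using (2)$\Rightarrow$(3) of Theorem~\ref{t-char}, fix a partition $\{P_n:n\in\om\}$ of $\om$ into finite nonempty sets such that $\{n:P_n\sub A\}$ is finite for every $A\in\mc{I}$. The idea is that choosing, for each $n$, a ``side'' of $P_n$ — but here $P_n$ is just finite, so instead I will pick one point $p_n\in P_n$ for each $n$ and work with the infinite set $D=\{p_n:n\in\om\}$; note $D\in\mc{I}^+$ since $P_n\sub D\cup(\om\bs D)^c$ doesn't immediately help, so more carefully: I should not thin $P_n$ to a point. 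Instead, the cleaner route is to split each $P_n$ into two nonempty pieces only when $|P_n|\ge 2$, which need not happen. So the actual plan: refine the partition. Since $\om$ is infinite and each $P_n$ is finite, the family $\{P_n\}$ is infinite; group them into consecutive blocks $Q_k=\bigcup_{n\in I_k}P_n$ where $I_k$ are consecutive finite intervals of $\om$ with $|I_k|\ge 2$, so that each $Q_k$ splits into two nonempty $\mc{I}$-relevant halves $Q_k^0,Q_k^1$ (each a union of at least one full $P_n$). Then for $x\in 2^\om$ set $A_x=\bigcup_{k\in\om}Q_k^{x(k)}$.

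Next I would verify the two required properties of the assignment $x\mapsto A_x$. For positivity: if $x\in 2^\om$ then $A_x\supseteq Q_k^{x(k)}\supseteq P_n$ for some $n\in I_k$, for every $k$; hence $\{n:P_n\sub A_x\}$ is infinite, so by the defining property of the partition $A_x\notin\mc{I}$, i.e. $A_x\in\mc{I}^+$. For almost disjointness: if $x\ne y$ and, say, $x(k)=0\ne 1=y(k)$ for all $k$ in an infinite set $S$ — that's too strong; rather, $x\ne y$ gives at least one such $k$. Here is the fix: I want $A_x\cap A_y\in\mc{I}$ whenever $x\ne y$, which fails for this crude coding because two sequences differing in one coordinate give sets with infinite intersection. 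So the correct standard move is to use an almost disjoint family on $\om$ first: fix a perfect almost disjoint family $\{E_x:x\in 2^\om\}\sub[\om]^\om$ (these exist in ZFC), and set $A_x=\bigcup_{k\in E_x}Q_k$. Then $A_x\in\mc{I}^+$ exactly as above (it contains infinitely many full $P_n$'s), and if $x\ne y$ then $E_x\cap E_y$ is finite, so $A_x\cap A_y=\bigcup_{k\in E_x\cap E_y}Q_k$ is a finite union of finite sets, hence finite. Thus $\{A_x:x\in 2^\om\}$ is the desired perfect $(\mc{I},\mrm{Fin})$-AD family, once I check the map $x\mapsto A_x$ is continuous and injective, which is routine from continuity of $x\mapsto E_x$ and the fact that each $Q_k$ is nonempty.

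For the ``in particular'' clause: if $\mc{I}$ is everywhere meager and $X\in\mc{I}^+$, then $\mc{I}\upharpoonright X$ is a meager ideal on the countably infinite set $X$, so applying the first part to $\mc{I}\upharpoonright X$ yields a perfect family $\{A_x:x\in 2^\om\}\sub(\mc{I}\upharpoonright X)^+\sub\mc{I}^+$ of subsets of $X$ that is $(\mc{I}\upharpoonright X,\mrm{Fin})$-AD, hence $(\mc{I},\mrm{Fin})$-AD, as pairwise intersections are finite and $(\mc{I}\upharpoonright X)$-positivity is the same as $\mc{I}$-positivity for subsets of $X$.

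I expect the only genuinely delicate point to be making sure the construction produces a \emph{perfect} (i.e. continuously and injectively parametrized by $2^\om$) family rather than merely a family of size continuum: this is handled by starting from an explicitly perfect AD family on $\om$ and observing that $x\mapsto\bigcup_{k\in E_x}Q_k$ inherits continuity, while injectivity follows since distinct $x,y$ give $E_x\ne E_y$ and the $Q_k$ are nonempty and pairwise disjoint, so the unions differ. Everything else — $\mc{I}$-positivity via the ``infinitely many blocks $P_n$ swallowed'' criterion, and finiteness of pairwise intersections via almost disjointness of the $E_x$ — is immediate from Theorem~\ref{t-char}.
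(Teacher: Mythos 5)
Your final construction is correct and is essentially the paper's proof: fix the partition $(P_n)$ from Theorem~\ref{t-char}, take a perfect AD family on $\om$, and fatten each member $E$ to $\bigcup\{P_n:n\in E\}$, verifying positivity via ``infinitely many $P_n$ swallowed'' and finite pairwise intersections via disjointness of the $P_n$. The intermediate blocking step $Q_k=\bigcup_{n\in I_k}P_n$ is superfluous — once you use a perfect AD family of index sets there is no need to split any $P_n$, so you can index directly by the $P_n$ as the paper does.
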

\begin{proof}
It is easy to define a perfect AD family $\mc{A}$ on $\om$ (e.g. consider the branches of $2^{<\om}$ in $\mc{P}(2^{<\om})$). Fix a partition $(P_n)_{n\in\om}$ of $\om$ into finite sets such that $\{n\in\om:P_n\subseteq A\}$ is finite for every $A\in\mc{I}$. For each $A\in \mc{A}$ let $A'=\bigcup\{P_n:n\in A\}\in \mc{I}^+$, and let $\mc{A}'=\{A':A\in \mc{A}\}$. Then $|A'\cap B'|<\om$ for every distinct $A,B\in\mc{A}$ hence $\mc{A}'$ is an $(\mc{I},\mrm{Fin})$-AD family. The function $\mc{P}(\om)\to\mc{P}(\om)$, $A\mapsto A'$ is injective and continuous hence $\mc{A}'$ is perfect.
\end{proof}

Concerning the reverse implications in Corollary \ref{c-sAD}, we prove the following.
\begin{thm}\label{t:reverse}$ $
\begin{itemize}
\item[(a)] The existence of a perfect $(\mc{I},\mrm{Fin})$-AD family does not imply that $\mc{I}$ is meager.
\item[(b)] If $\mf{b}=\mf{c}$ then there is an non-meager ideal $\mc{I}$ such that there are perfect $(\mc{I},\mrm{Fin})$-AD families on every $X\in\mc{I}^+$. Here $\mf{c}$ stands for the continuum and $\mf{b}$ for the {\em bounding number}, that is, $\mf{b}=\min\{|F|:F\subseteq\om^\om$ is $\leq^*$-unbounded$\}$ where $f\leq^* g$ iff the set $\{n\in\om:f(n)>g(n)\}$ is finite.
\item[(c)] There is an ideal $\mc{I}$ such that every $\mc{I}$-AD family is countable but $\mc{I}$ is nowhere maximal, that is, $\mc{I}\upharpoonright X$ is not a prime ideal for any $X\in\mc{I}^+$ (in particular, there are infinite $\mc{I}$-AD families).
\item[(d)] It is independent from $\mrm{ZFC}$ whether the example in (c) can be chosen as $\Ubf{\Sigma}^1_2$.
\end{itemize}
\end{thm}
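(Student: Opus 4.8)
I would treat the four items largely separately, deducing (a) from (b).

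\smallskip
\noindent\emph{Item (c).} The key observation is that ``every $\mc{I}$-AD family is countable'' says exactly that the Boolean algebra $\mc{P}(\om)/\mc{I}$ is ccc, while ``$\mc{I}$ is nowhere maximal'' says it is atomless; so it suffices to realise \emph{some} atomless ccc algebra as $\mc{P}(\om)/\mc{I}$, and the null ideal of a diffuse, atomless, finitely additive measure does this. Concretely I would fix a non-principal ultrafilter $\mc{U}$ on $\om$, put $\mu(A)=\lim_{\mc{U}}|A\cap n|/n$ for $A\sub\om$ and $\mc{I}=\{A\sub\om:\mu(A)=0\}$. Then $\mu$ is finitely additive with $\mu(\{n\})=0$ and $\mu(\om)=1$, so $\mc{I}$ is an ideal with $\mrm{Fin}\sub\mc{I}$ and $\om\notin\mc{I}$. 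It is nowhere maximal: if $\mu(X)>0$ and $Y$ consists of every second element of $X$ in increasing order, then $\mu(Y)=\mu(X\setminus Y)=\tfrac12\mu(X)>0$, so $\mc{I}\upharpoonright X$ is not prime (and there are infinite $\mc{I}$-AD families, obtained by splitting $\om$ repeatedly into halves). And it is ccc: if $\{A_\al\}$ is an $\mc{I}$-AD family, then for each $k$ any $m$ distinct members of $\mu$-measure $\ge 1/k$ have, by finite additivity and $\mu(A_\al\cap A_\be)=0$, union of measure $\ge m/k$; since that is $\le 1$, only finitely many such members exist, so the whole family $\bigcup_k\{\al:\mu(A_\al)\ge 1/k\}$ is countable.

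\smallskip
\noindent\emph{Item (d).} Both directions follow from (c) together with descriptive input. Under $V=L$ there is a $\Ubf{\Sigma}^1_2$ ultrafilter $\mc{U}$, and then ``$\mu(A)=0$'' unfolds as ``$\forall k\ \{n:k\cdot|A\cap n|<n\}\in\mc{U}$'', a countable intersection of $\Ubf{\Sigma}^1_2$ conditions on $A$, so the ideal $\mc{I}$ of (c) is $\Ubf{\Sigma}^1_2$ and still works. Conversely, if every $\Ubf{\Sigma}^1_2$ set has the Baire property --- which is consistent with $\mrm{ZFC}$ --- then any $\Ubf{\Sigma}^1_2$ ideal, and each of its restrictions $\mc{I}\upharpoonright Y$ (again $\Ubf{\Sigma}^1_2$), has the Baire property, hence is meager by Theorem~\ref{t-char}; so the ideal is everywhere meager, and Corollary~\ref{c-sAD} gives a perfect $(\mc{I},\mrm{Fin})$-AD family, i.e.\ an uncountable $\mc{I}$-AD family --- so no $\Ubf{\Sigma}^1_2$ ideal can witness (c). Hence the statement is independent of $\mrm{ZFC}$.

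\smallskip
\noindent\emph{Items (b) and (a).} For (b) I would build $\mc{I}$ by a recursion of length $\mf{c}$: enumerate all partitions of $\om$ into finite sets and all infinite subsets of $\om$, and maintain an increasing chain of ideals $\mc{I}_\al$ of size $<\mf{c}$ together with, for each positive set handled so far, a reserved continuous perfect family $\{A_t:t\in 2^\om\}$ of $\mc{I}_\al$-positive sets with finite pairwise intersections, which are to remain positive forever. At stage $\al$ one defeats the $\al$-th partition $(P^\al_n)$ by adjoining to the ideal a set $Y_\al$ which is a union of infinitely many and of co-infinitely many blocks $P^\al_n$ (it is this, through Theorem~\ref{t-char}, that makes $\mc{I}$ non-meager in the limit), and, if the $\al$-th infinite set is still positive and unhandled, one reserves a perfect family inside it. The delicate point, where $\mf{b}=\mf{c}$ is essential and which I expect to be the main obstacle, is to choose each $Y_\al$ (and each reserved family) so that the two tasks do not conflict: a reserved family, though of size $\mf{c}$, is coded by its countably many finite ``windows'', so one can attach to each of the (fewer than $\mf{c}$) reservations a function in $\om^\om$ recording how it meets the partitions, take a single $\le^*$-bound for all of these, and use that bound to keep the blocks chosen into $Y_\al$ sparse enough that every previously reserved member stays $\mc{I}_\al\cup\{Y_\al\}$-positive. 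Finally, (a) is immediate from (b): since $\mf{b}=\mf{c}$ is consistent with $\mrm{ZFC}$, in such a model (b) produces a non-meager ideal carrying a perfect $(\mc{I},\mrm{Fin})$-AD family, so ``a perfect $(\mc{I},\mrm{Fin})$-AD family exists'' does not imply ``$\mc{I}$ is meager''.
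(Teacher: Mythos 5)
Your treatments of (c) and (d) coincide with the paper's: the same ultrafilter-limit measure $\mu(A)=\lim_{\mc U}|A\cap n|/n$ with $\mc I=\{A:\mu(A)=0\}$, the same finite-additivity argument that each $\{\al:\mu(A_\al)\geq 1/k\}$ is finite, the same use of non-atomicity for nowhere-maximality, and in (d) the same two directions --- a $\Ubf{\Delta}^1_2$ ultrafilter in $L$ on one side, and the Baire property of all $\Ubf{\Sigma}^1_2$ sets together with Corollary~\ref{c-sAD} on the other. Your sketch of (b) also follows the paper's recursion of length $\mf c$: maintain an increasing chain of ideals generated by $<\mf c$ sets (hence everywhere meager when $\mf b=\mf c$), reserve via Corollary~\ref{c-sAD} a perfect $(\mc I_\al,\mrm{Fin})$-AD family inside each positive $X_\al$, and defeat the $\al$-th finite-block partition by adjoining a set cut out using a $\le^*$-bound. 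You do gloss over the crucial step, though: for each reserved perfect family $\mc A_\be$ and each $B$ in (a cofinal subfamily of) the current ideal, one uses compactness of $\mc A_\be$ as a closed subset of $\mc P(\om)$ to produce a partition $Q_{\be,B}$ of $\om$ into intervals each block of which meets every $A\in\mc A_\be$ outside $B$; it is this collection of $<\mf c$ partitions, together with the $\al$-th one and a witness of the meagerness of $\mc I'_\al$, that gets dominated, and the even blocks of the dominating partition give the new generator. Your ``countably many finite windows'' phrasing gestures at this, but does not say why one partition per pair (reservation, generator) suffices, nor why such a partition exists --- that is exactly the compactness claim. Where you genuinely diverge is item (a). The paper gives a self-contained ZFC counterexample: fix a prime ideal $\mc J$ on $\om$; for each finite-block partition $P$ of $\om$ choose $A_P\in\mc J$ containing infinitely many $P$-blocks (possible since $\mc J$ is non-meager, by Theorem~\ref{t-char}); copy $A_P$ up to $A'_P=\bigcup\{2^k:k\in A_P\}\subseteq 2^{<\om}$; and let $\mc I$ on $2^{<\om}$ be generated by the $A'_P$. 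Then the branches of $2^{<\om}$ form a perfect $(\mc I,\mrm{Fin})$-AD family, while Theorem~\ref{t-char} shows $\mc I$ is non-meager. You instead deduce (a) from the consistency of (b). Both suffice to establish the stated non-implication, but the paper's route is stronger --- the counterexample exists outright in ZFC, not merely consistently --- and it does not depend on the considerably harder construction in (b).
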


Corollary \ref{c-sAD} has an easy but important application. Clearly, if $\mc{I}$ is an ideal on $\om$ then there is a family (e.g. $\mc{I}^+$) of size $\mf{c}$ which does not have any $\mc{I}$-ADR's. Conversely, we have the following very special case of results from  \cite{BHM} and \cite{BaSiVo}:
\begin{prop}\label{refgen}
If $\mc{I}$ is an everywhere meager ideal and $\mc{H}\in [\mc{I}^+]^{<\mf{c}}$, then $\mc{H}$ has an $\mc{I}$-ADR.
\end{prop}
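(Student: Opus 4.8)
The plan is to build the refinement by a transfinite recursion of length $<\mf{c}$ over an enumeration of $\mc{H}$, using at each step a perfect $(\mc{I},\mrm{Fin})$-AD family (Corollary \ref{c-sAD}) whose $\mf{c}$ members serve as candidates to be tested against fewer than $\mf{c}$ ``forbidden'' ones. First recall how such families arise. For $Y\in\mc{I}^+$, Theorem \ref{t-char} applied to the meager ideal $\mc{I}\upharpoonright Y$ yields a partition $\{P^Y_t:t\in 2^{<\om}\}$ of $Y$ into finite sets (indexing by $2^{<\om}$ after fixing a bijection with $\om$) such that $\{t:P^Y_t\sub A\}$ is finite for every $A\in\mc{I}\upharpoonright Y$, equivalently for every $A\in\mc{I}$ (as $P^Y_t\sub Y$). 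Put $B^Y_s=\bigcup\{P^Y_t:s\sub t\}$ for $s\in 2^{<\om}$ and $B^Y_r=\bigcup\{P^Y_{r\upharpoonright k}:k\in\om\}$ for $r\in 2^\om$; then each $B^Y_r$ is $\mc{I}$-positive and $\{B^Y_r:r\in 2^\om\}$ is a perfect $(\mc{I},\mrm{Fin})$-AD family on $Y$, exactly as in the proof of Corollary \ref{c-sAD}. The gadget I would use is: for $C\in\mc{I}^+$ let $D^Y_C:=\{r\in 2^\om:B^Y_{r\upharpoonright k}\cap C\in\mc{I}^+\text{ for every }k\in\om\}$; then $D^Y_C$ is closed, is empty if $C\cap Y\in\mc{I}$, and whenever $r\notin D^Y_C$ one has $B^Y_r\cap C\in\mc{I}$ (for then $B^Y_{r\upharpoonright k}\cap C\in\mc{I}$ for some $k$, and $B^Y_r\setminus B^Y_{r\upharpoonright k}$ is finite). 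Hence, for a family $\{C_i:i\in J\}\sub\mc{I}^+$, any $r\in 2^\om\setminus\bigcup_{i\in J}D^Y_{C_i}$ gives an $\mc{I}$-positive set $B^Y_r\sub Y$ with $B^Y_r\cap C_i\in\mc{I}$ for every $i$.

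Now enumerate $\mc{H}=\{H_\al:\al<\ka\}$, $\ka=|\mc{H}|<\mf{c}$, and recurse on $\al$. At stage $\al$, having constructed $A_{H_\be}\in\mc{I}^+$ with $A_{H_\be}\sub H_\be$ for $\be<\al$, I would choose the partition $\{P^{H_\al}_t\}$ of $H_\al$ \emph{adaptively}, i.e.\ \emph{after} inspecting the sets $A_{H_\be}$ $(\be<\al)$, so that besides the condition of Theorem \ref{t-char} it ``spreads out'' each $A_{H_\be}$ along the resulting tree — precisely, so that $D^{H_\al}_{A_{H_\be}}$ is countable for every $\be<\al$ (those $\be$ with $A_{H_\be}\cap H_\al\in\mc{I}$ cost nothing, as then $D^{H_\al}_{A_{H_\be}}=\0$). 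Granting this, $\bigcup_{\be<\al}D^{H_\al}_{A_{H_\be}}$ has size $\le|\al|\cdot\aleph_0<\mf{c}$, so I may pick $r_\al\in 2^\om$ outside it and set $A_{H_\al}:=B^{H_\al}_{r_\al}$. By the first paragraph $A_{H_\al}$ is $\mc{I}$-positive, $A_{H_\al}\sub H_\al$, and $A_{H_\al}\cap A_{H_\be}\in\mc{I}$ for every $\be<\al$; at stage $\be>\al$ the pair $\{A_{H_\al},A_{H_\be}\}$ is handled symmetrically since $\al<\be$. Thus $\{A_{H_\al}:\al<\ka\}$ is an $\mc{I}$-ADR of $\mc{H}$. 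Note that no separate ``foresight'' bookkeeping on the $A_{H_\be}$ is needed: the adaptive choice of partition absorbs however much the earlier sets may already cover $H_\al$.

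The entire weight of the argument therefore rests on the step ``such an adaptive partition of $H_\al$ exists'', which is where one genuinely needs the full force of ``everywhere meager'' (perfect $(\mc{I},\mrm{Fin})$-AD families \emph{below every} $\mc{I}$-positive set, not only on $\om$), and which I expect to be the main obstacle. Fixing the refining partitions once and for all in advance does \emph{not} suffice: a single $\mc{I}$-positive subset of $H_\al$ can stay $\mc{I}$-positive along $\mf{c}$ many branches of a prescribed tree, so $D^{H_\al}_{A_{H_\be}}$ can be all of $2^\om$; and even choosing adaptively to which few branches to route each of the $<\mf{c}$ sets $A_{H_\be}\cap H_\al$ is delicate, because these sets overlap (albeit only mod $\mc{I}$) and because any infinite set of branches accumulates in the compact space $2^\om$. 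Carrying out this routing — keeping the Talagrand condition of Theorem \ref{t-char} while forcing each of $<\mf{c}$ prescribed sets to survive along only countably many branches (more than enough: one only needs $\bigcup_{\be<\al}D^{H_\al}_{A_{H_\be}}\ne 2^\om$) — is the technical heart of the refinement theorems of \cite{BHM} and \cite{BaSiVo}, of which Proposition \ref{refgen} is a special case; in the write-up I would either execute this routing or simply invoke \cite{BHM,BaSiVo}. For $\mc{I}=\mrm{Fin}$ the whole argument specializes to a proof of the Fact from the introduction.
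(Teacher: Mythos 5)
Your framework — partition $H_\al$ by $2^{<\om}$, read off branches, diagonalize past the closed sets $D^{H_\al}_{A_{H_\be}}$ — is internally consistent as far as it goes, and the small bookkeeping facts you state (e.g.\ $r\notin D^Y_C\Ra B^Y_r\cap C\in\mc{I}$, $D^Y_C=\0$ when $C\cap Y\in\mc{I}$) are correct. But the proposal has a genuine gap, and it sits exactly where you yourself put it: the existence, at each stage $\al$, of a partition of $H_\al$ into finite sets that \emph{simultaneously} satisfies the Talagrand condition of Theorem \ref{t-char} and forces $D^{H_\al}_{A_{H_\be}}$ to be countable (or even merely forces $\bigcup_{\be<\al}D^{H_\al}_{A_{H_\be}}\ne 2^\om$) for all $\be<\al$. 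You give no construction of such a partition, and invoking \cite{BHM,BaSiVo} for it is circular: the proposition is presented precisely as a self-contained proof of a special case of those results. It is also not obviously true. Routing a single $\mc{I}$-positive set along one branch is easy, but the Talagrand condition constrains the tree against \emph{every} $A\in\mc{I}$, and you must control the traces of $|\al|$ overlapping sets at once; nothing in the proposal shows these demands are jointly satisfiable. So the argument reduces the proposition to an unproved (and nontrivial) lemma.

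The paper sidesteps this difficulty with a different decomposition that requires no adaptive tree. It fixes a single $(\mc{I},\mrm{Fin})$-AD family $\mc{A}=\{A_\xi:\xi<\ka^+\}$ on $H_0$ (Corollary \ref{c-sAD} gives a perfect one, hence of size $\mf{c}\ge\ka^+$), and for each $\be<\ka$ considers the trace $T_\be=\{\xi<\ka^+:H_\be\cap A_\xi\in\mc{I}^+\}$. Splitting into $R=\{\be:|T_\be|\le\ka\}$ and its complement, for $\al\in\ka\setminus R$ one picks $\xi_\al\in T_\al$ by a plain cardinality diagonalization avoiding $\bigcup_{\be\in R}T_\be$ (size $\le\ka$) and the previously chosen $\xi_{\al'}$; then $E_\al=H_\al\cap A_{\xi_\al}$ gives an $\mc{I}$-ADR of $\{H_\al:\al\notin R\}$ whose members in addition meet every $H_\be$ with $\be\in R$ in a set in $\mc{I}$, so the procedure can be recursed on $\{H_\be:\be\in R\}$; it terminates because $0\notin R$ at every level, so the remaining index set strictly shrinks. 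Thus ``everywhere meager'' enters only through one application of Corollary \ref{c-sAD} per level of the recursion, and the per-stage adaptive-partition construction that your proof would have to supply is replaced by elementary cardinal arithmetic on the traces $T_\be$ of a single fixed family. If you want to pursue your route you would have to actually carry out the routing construction; the paper's is substantially lighter.
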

\begin{proof}
Let $\mc{H}=\{H_\al:\al<\ka\}$. Applying Corollary \ref{c-sAD}, we can fix an $\mc{I}$-AD family $\mc{A}=\{A_\xi:\xi<\ka^+\}$ on $H_0$ and  for every $\be<\ka$ let $T_\be=\{\xi<\ka^+:H_\be\cap A_\xi\in\mc{I}^+\}$, furthermore let $R=\{\be<\ka:|T_\be|\leq \ka\}$ (we know that $0\notin R$). By induction on $\al\in\ka\setminus R$ we can pick a
\[ \xi_\al\in T_\al\setminus\Big(\bigcup_{\be\in R}T_\be\cup\big\{\xi_{\al'}:\al'\in\al\setminus R\big\}\Big)\] because $|T_\al|=\ka^+$ and $|\bigcup\{T_\be:\be\in R\}|\leq\ka$, and let $E_\al=H_\al\cap A_{\xi_\al}\in\mc{I}^+$. Then the family $\{E_\al:\al\in\ka\setminus R\}$ is an $\mc{I}$-ADR of $\{H_\al:\al\in\ka\setminus R\}$. We can continue the procedure on $\{H_\be:\be\in R\}$ because $E_\al\cap H_\be\in\mc{I}$ for every $\al\in\ka\setminus R$ and $\be\in R$.
\end{proof}

This proposition motivates the following:

\begin{que}\label{str}
Let $\mc{I}$ be an everywhere meager ideal and $\mc{H}\in [\mc{I}^+]^{<\mf{c}}$. Does $\mc{H}$ have an $(\mc{I},\mrm{Fin})$-ADR?
\end{que}

We answer this question, at least consistently:

 \begin{thm} \label{t:martin}
Assume $\mrm{MA}_\ka$ and let $\mc{I}$ be an everywhere meager ideal, then every $\mc{H}\in [\mc{I}^+]^{\leq\ka}$ has an $(\mc{I},\mrm{Fin})$-ADR.
\end{thm}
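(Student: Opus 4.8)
The plan is to build the family $\{A_H : H \in \mc{H}\}$ by a transfinite recursion of length $\le \ka$ using a standard Martin's Axiom forcing argument, enumerating $\mc{H} = \{H_\al : \al < \ka\}$ and adding one set $A_{H_\al}$ at each stage. Since $\mrm{MA}_\ka$ lets us handle $\ka$-many dense sets at once, the real work is to find, \emph{at a single stage $\al$}, a ccc forcing $\PP_\al$ and $\ka$-many dense sets whose generic produces an infinite set $A_{H_\al} \sub H_\al$ with $A_{H_\al} \in \mc{I}^+$ and $|A_{H_\al} \cap A_{H_\be}| < \om$ for all $\be < \al$. Here is where everywhere meagerness enters: by Theorem~\ref{t-char} applied to $\mc{I}\restriction H_\al$ (which is meager in $\mc{P}(H_\al)$), fix a partition $(P^\al_n)_{n\in\om}$ of $H_\al$ into finite sets such that $\{n : P^\al_n \sub B\}$ is finite for every $B \in \mc{I}$; equivalently, a set $C \sub H_\al$ meeting infinitely many $P^\al_n$ lies in $\mc{I}^+$. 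So it suffices to produce $A_{H_\al} \sub H_\al$ meeting infinitely many blocks $P^\al_n$ while being almost disjoint from the earlier $A_{H_\be}$.

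The forcing $\PP_\al$ is the natural one: conditions are pairs $(s, F)$ where $s$ is a finite subset of $H_\al$ and $F \in [\al]^{<\om}$, ordered by $(s', F') \le (s, F)$ iff $s' \supseteq s$, $F' \supseteq F$, and $s' \bs s$ is disjoint from $\bigcup_{\be \in F} A_{H_\be}$. This is $\sigma$-centered (conditions with the same first coordinate are compatible), hence ccc. The relevant dense sets are, for each $\be < \al$, the set $D_\be = \{(s,F) : \be \in F\}$ (which forces $A_{H_\al} \bs A_{H_\be}$ finite), and for each $n \in \om$ the set $E_n = \{(s, F) : |\{k : P^\al_k \cap s \ne \0 \text{ and } k > n\}| \ge 1\}$ forcing $A_{H_\al}$ to meet some block beyond $n$, so that altogether $A_{H_\al}$ meets infinitely many blocks. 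Density of $E_n$ is the crucial point: given $(s, F)$, I need to find a block $P^\al_k$ with $k > n$ and $P^\al_k \bs \bigcup_{\be \in F} A_{H_\be} \ne \0$, then I can throw one such point into $s$. If every block $P^\al_k$ with $k > n$ were contained in $\bigcup_{\be \in F} A_{H_\be}$, then $H_\al$ minus finitely many blocks would be covered by finitely many of the $A_{H_\be}$ — but I must rule this out.

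The main obstacle is exactly this last point: ensuring that at stage $\al$ the set $H_\al$ is not already almost covered (modulo the partition) by finitely many previously chosen sets $A_{H_\be}$. This is an invariant that must be maintained \emph{throughout} the recursion, and it is not automatic — it is the analogue of the classical "$\mc{H}$ has no ADR only if it is badly degenerate" phenomenon, but here one cannot simply assume it. The fix is to strengthen the induction hypothesis: I will maintain that for every $\al < \ka$ and every finite $G \sub \al$, the set $H_\al \bs \bigcup_{\be \in G} A_{H_\be}$ is still $\mc{I}$-positive, in fact meets infinitely many blocks of \emph{every} $H_\ga$'s partition in a suitable sense; this should follow because each $A_{H_\be}$ is itself $\mc{I}$-positive and was built to meet infinitely many of its own blocks, but one must check it does not over-cover others. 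Concretely, I expect one actually wants to choose, at stage $\al$, the partition $(P^\al_n)$ and simultaneously guarantee by the genericity over the \emph{combined} system of dense sets $\{D_\be\} \cup \{E_n\}$ (only $|\al| + \om \le \ka$ many) that $A_{H_\al}$ stays inside $H_\al$, is $\mc{I}$-positive, and is $\sub^*$-almost disjoint from all earlier $A_{H_\be}$ — and then verify the preserved invariant for stage $\al+1$, namely that removing $A_{H_\al}$ (together with finitely many earlier sets) from any later $H_\ga$ leaves something $\mc{I}$-positive. Since each $A_{H_\be}$, being almost disjoint from the partition blocks it does not meet, is "thin" relative to other $H_\ga$, a counting/pigeonhole argument on blocks should close this; that bookkeeping is the part requiring care, but it is routine once set up correctly, and $\sigma$-centeredness of $\PP_\al$ makes the $\mrm{MA}_\ka$ application immediate.
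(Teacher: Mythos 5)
There is a genuine gap, and it sits exactly at the spot you flagged as ``the part requiring care.'' The crucial error is in the sentence ``a set $C\sub H_\al$ meeting infinitely many $P^\al_n$ lies in $\mc{I}^+$.'' Theorem~\ref{t-char}(3) gives that any set \emph{containing} (i.e., $\supseteq$) infinitely many of the blocks $P^\al_n$ is $\mc{I}$-positive; merely \emph{meeting} infinitely many blocks is equivalent to being infinite (since the $P^\al_n$ partition $H_\al$ into finite sets), and infinite sets need not be $\mc{I}$-positive for any $\mc{I}\ne\mrm{Fin}$. Your dense sets $E_n$ only force the generic to meet a block beyond $n$, so the resulting $A_{H_\al}$ is merely infinite; nothing forces $A_{H_\al}\in\mc{I}^+$, and the proof collapses. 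If you repair $E_n$ to demand that a whole block $P^\al_k$ with $k>n$ be added to $s$, then density requires a block entirely disjoint from $\bigcup_{\be\in F}A_{H_\be}$, which is a much stronger condition — and this is precisely where the recursion-with-invariant approach becomes delicate. The invariant you propose (``$H_\ga\setminus\bigcup_{\be\in G}A_{H_\be}\in\mc{I}^+$ for all later $\ga$ and finite $G$'') is not obviously expressible by dense sets at stage $\al$, because whether the generic $A_{H_\al}$ preserves the $\mc{I}$-positivity of the $\ka$-many sets $H_\ga\setminus\bigcup_{\be\in G\setminus\{\al\}}A_{H_\be}$ is a global property that your forcing $\PP_\al$ was not designed to control; the ``counting/pigeonhole'' remark does not close this.

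The paper avoids the recursion entirely. It uses a \emph{single} ccc forcing $\PP(\mc{H})$ whose conditions are finite partial functions $p$ with $p(\al)\in[H_\al]^{<\om}$, ordered so that once $\al,\be\in\dom(q)$ the intersection $p(\al)\cap p(\be)$ is frozen (forcing almost-disjointness automatically), and then shows via a projection $e_\al:\PP\to\mbb{C}(H_\al)$ that each coordinate $F_G(\al)$ is a Cohen real in $\mc{P}(H_\al)$ over $V$. Since $\mc{I}\restriction H_\al$ is meager, $F_G(\al)\in\mc{I}^+$ follows from countably many dense sets per coordinate (translated through $e_\al$), giving $\ka$-many dense sets total — a clean single $\mrm{MA}_\ka$ application with no invariant to maintain. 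If you want to salvage your one-at-a-time approach, the missing idea is precisely this: make $A_{H_\al}$ Cohen-generic over everything constructed so far, not just almost disjoint from it; without Cohen-genericity (or an equivalent genericity strong enough to evade $\mc{I}$) you have no handle on $\mc{I}$-positivity.
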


We also define new notions of mixing and injective mixing reals, and investigate connections between adding (injective) mixing reals  and classical properties of forcing notions (such as adding Cohen/random/splitting/dominating reals and the Laver/Sacks-properties).

\begin{Def}
Let $\PP$ be a forcing notion. We say that an $f\in \om^\om\cap V^\PP$ is a {\em mixing real} over $V$ if  $|f[X]\cap Y|=\om$ for every $X,Y\in [\om]^\om\cap V$.
If $f$ is one-to-one, then we call it an {\em injective mixing real} or  {\em mixing injection}.
\end{Def}

Our results are summarized in the following proposition.

\begin{prop} Let $\PP$ be a forcing notion.
\begin{itemize}
\item[(i)] If $\PP$ adds random  reals, then it adds mixing reals.
\item[(ii)] If $\PP$ adds dominating reals, then it adds mixing reals.
\item[(iii)] If $\PP$ adds Cohen reals, then it adds mixing injections.
\item[(iv)] If $\PP$ adds mixing injections, then it adds unbounded reals.
\item[(v)] If $\PP$ has the Laver-property, then it does not add injective mixing reals.
\end{itemize}
\end{prop}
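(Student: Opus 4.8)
The plan is to handle the five items essentially independently, since each links a single classical forcing property to the (non-)existence of mixing reals, and most reduce to a one-line density or genericity argument once the right ``target'' is set up. Throughout, fix in $V$ an enumeration or bookkeeping of $[\om]^\om\cap V$; the point in each case is that ``$|f[X]\cap Y|=\om$ for all $X,Y\in[\om]^\om\cap V$'' only needs to be checked against the ground-model sets, so we always have enough room.

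For (i), given a random real one reads off a new subset $r\subseteq\om\times\om$ that is random for the product measure; define $f$ by letting $f(n)$ be (say) the least $m$ with $(n,m)\in r$, or more robustly use $r$ to build $f$ so that for each pair $X,Y\in[\om]^\om\cap V$ the event ``$f[X]\cap Y$ is infinite'' has measure one (a Borel-Cantelli / independence computation: the conditions forcing $|f[X]\cap Y|\le k$ form a null set, coded in $V$). For (ii), if $d$ dominates all ground-model reals, then for any $X=\{x_0<x_1<\dots\}$ and $Y=\{y_0<y_1<\dots\}$ in $V$ we can arrange $f(x_i)\in Y$ by making $f$ on $X$ climb through $Y$ fast enough — concretely, define $f$ recursively on $\om$ so that on the $i$-th ground-model interval determined by a fixed partition, $f$ takes a value in the $i$-th block of every relevant ground-model set; domination of the ``speed'' function $g_{X,Y}$ (which lives in $V$) guarantees this succeeds cofinally, hence $f[X]\cap Y$ is infinite. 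For (iii), a Cohen real over $V$ is, after recoding, a generic element of $\bigcup_n\prod_{i<n}(\text{distinct values})$; the set of finite injective partial functions $p$ such that $p$ ``hits $Y$ from $X$ one more time'' is dense for every $X,Y\in[\om]^\om\cap V$ (extend $p$ by choosing a fresh element of $X$ not in $\dom p$ and a fresh element of $Y$ not in $\ran p$), so the Cohen-generic union is a mixing injection.

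For (iv), suppose $f$ is a mixing injection in $V[G]$ and assume toward a contradiction that $f$ is dominated by some $g\in\om^\om\cap V$; then on every infinite $X\in V$ we would have $f[X]\subseteq\bigcup_{n\in X}[0,g(n)]$, and choosing $Y\in[\om]^\om\cap V$ with $Y\cap\bigcup_{n\le k}[0,g(n)]$ controlled (a diagonal construction in $V$) forces $f[X]\cap Y$ finite, contradicting mixing — so $f$ (which witnesses unboundedness once we also note injectivity forces $f$ itself, or a simple modification, to be unbounded) gives an unbounded real. For (v), use the Laver property directly: if $\PP$ has it and $\dot f$ is a name for an injection $\om\to\om$, pick a ground-model $h$ with $f(n)\le h(n)$ (Laver property implies $\om^\om$-bounding suffices here, or argue with the slalom) and a slalom $S$ with $|S(n)|\le n$ and $f(n)\in S(n)$; then in $V$ build $X,Y\in[\om]^\om$ with $X$ thin enough that the injectivity of $f$ together with $f\restriction X\subseteq S$ pins $f[X]$ inside a ground-model set of density zero, and choose $Y\in V$ avoiding it, so $f[X]\cap Y$ is finite. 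The main obstacle is (v): one must exploit \emph{injectivity} to turn the slalom (which a priori allows many preimages per value) into a genuine restriction on the range, so the construction of $X$ must grow fast enough that the $\le n$-sized slalom windows, being injectively filled, cover only a sparse ground-model set — this pigeonhole-with-injectivity step is where the argument for general mixing reals (item (v) is stated only for \emph{injective} ones) genuinely breaks down, and getting the counting right is the crux.
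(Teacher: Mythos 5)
Your items (i)--(iii) are in the right territory: (iii) is exactly the paper's argument (the set of finite injective partial functions $p$ hitting $Y$ from $X$ one more time is dense, so an injective Cohen real is mixing); (i) is the same null-set computation the paper carries out explicitly for $A_{X,Y}=\{f:|f[X]\cap Y|<\om\}$; and (ii) is a workable direct construction, though the paper instead reformulates mixing reals as infinite splitting partitions (Proposition \ref{mixchar}(iii)) and then simply coarsens a dominating partition, which is cleaner and worth internalizing.

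Item (iv), however, has a genuine gap. You assume $f\le g$ for some $g\in V$ and argue that $f[X]\subseteq\bigcup_{n\in X}[0,g(n)]$ gives a restriction. It does not: for any infinite $X$ and unbounded $g$ that union is all of $\om$, so no choice of $Y$ follows from it. The indispensable idea here is to bound \emph{both} $f$ and $f^{-1}$. If $\PP$ added no unbounded real it would be $\om^\om$-bounding, so there is a single strictly increasing $g\in V$ with $f<g$ and $f^{-1}<g$ (the latter on $\ran f$). The paper then builds, \emph{in $V$}, interleaved sequences $x_0=0$, $y_0=g(0)$, $x_n=\max\{g(y_k):k<n\}$, $y_n=g(x_n)$; the two-sided bound is exactly what makes the case split ($k\le l$ uses $f<g$, $k>l$ uses $f^{-1}<g$) yield $f[X]\cap Y=\emptyset$, a contradiction. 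Without controlling $f^{-1}$ there is no way to rule out, say, $f^{-1}(y_0)$ being astronomically large.

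Item (v) is more a confession than a proof, and the difficulty is real: your pointwise slalom $f(n)\in S(n)$, $|S(n)|\le n$, does not by itself yield the required $X,Y$. Nothing stops $\bigcup_{n\in X}S(n)$ from being $\om$ (e.g.\ $S(n)\supseteq\{0,\dots,\lfloor\log n\rfloor\}$), and injectivity is a \emph{global} constraint linking all coordinates, which a pointwise slalom doesn't see. The paper's move is to apply the Laver property not to $\dot f$ itself but to the name $\dot g(n)=\dot f\cap(a_n\times a_n)$ for a fast-growing $(a_n)\in V$ with $a_{n+1}-a_n>(n+2)2^{n+1}$. Then each member of $S(n)$ may be taken to be an injective partial function $a_n\to a_n$ and $|S(n)|\le 2^n$, and now the pigeonhole is available: if every $m\in a_{n+1}\setminus a_n$ were sent into $Y_n$ (size $n+1$) by some $s\in S(n+1)$, then $n+2$ of them would use the \emph{same} injective $s$, an impossibility. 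This is the ``injectively filled'' counting you identify as the crux, but it needs the chunked slalom --- capturing the partial function, not just the value --- to even be stated. Until that reformulation is in place your (v) does not go through.
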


Our paper is organized as follows. In Section \ref{prelim} we recall some notations and classical results of descriptive set theory we will need later.

The next two sections are focused on descriptive aspects of nice ideals and almost disjoint refinements. In Section \ref{projid} we present a plethora of examples of Borel and projective ideals on $\om$. In Section \ref{mainsec} we prove Theorem \ref{main} by modifying Brendle's proof of Theorem \ref{togeneralize}.

The next two sections contain rather combinatorial results. In Section \ref{everywheremeager} we prove Theorem \ref{t:reverse}, as well as study some problems concerning the possible generalizations of Corollary \ref{c-sAD} on the second level of the projective hierarchy. In Section \ref{stradrs} we prove Theorem \ref{t:martin}.

In Section \ref{mixrealssec} we study the notions of mixing and injective mixing reals. In this section we will heavily use standard facts about forcing notions, for the details see \cite{BaJu}.

Finally, in Section \ref{relques}, we list some open questions concerning our results.

\section{Descriptive set theory and ideals}\label{prelim}

As usual, $\Ubf{\Sigma}^0_\al, \Ubf{\Pi}^0_\al$ will stand for the $\alpha$th level of the Borel hierarchy while we denote by $\Ubf{\Sigma}^1_n, \Ubf{\Pi}^1_n$ the levels of the projective hierarchy. If $r$ is a real, the appropriate relativised versions are denoted by $\Sigma^0_\al(r), \Pi^0_\al(r)$, etc. For the ambiguous classes we write $\Ubf{\Delta}^{i}_\al$ and $\Delta^{i}_\al(r)$.

Suppose that $\mc{I}$ is an ideal on the set $X$. As mentioned before, if $X$ is countable then we can talk about complexity of ideals: $\mc{I}$ is $F_\sigma$, $\Ubf{\Sigma}^0_\al$, $\Ubf{\Pi}^1_n$, etc if $\mc{I}\subseteq  \mc{P}(X)\simeq 2^X$ is an $F_\sigma$, $\Ubf{\Sigma}^0_\al$, $\Ubf{\Pi}^1_n$, etc set in the usual compact Polish topology on $2^X$. If we fix a bijection between $\omega$ and $X$ we can define the collection of $\Sigma^0_\al(r), \Pi^0_\al(r)$, etc subsets  of $2^X$ as well. If $X=\omega^n,\Delta=\{(n,m)\in\om^2:m\leq n\},[\om]^n,2^{<\om},\om^{<\om},\mbb{Q}(=\{$rational numbers$\})$ then the we will always assume that the bijection is the usual, recursive one.

For example, $\mathrm{Fin}=[\om]^{<\om}$ is an $F_\sigma$ ideal, $\mc{Z}=\{A\subseteq\om:|A\cap n|/n\to 0\}$ is $F_{\sigma\delta}$, and $\mathrm{Conv}=\{A\subseteq\mbb{Q}\cap [0,1]:A$ has only finitely many accumulation points$\}$ is $F_{\sigma\delta\sigma}$, etc (see more examples in Section \ref{projid}). Similarly, we can associate descriptive complexity to any $\mc{X}\subseteq\mc{P}(\om)$, and we can also talk about the Baire property and measurability of subsets of $\mc{P}(\om)$. Clearly, if $Y\in\mc{I}^+$ then $\mc{I}\upharpoonright Y$ belongs to the same Borel or projective class in $\mc{P}(Y)$ as $\mc{I}$ in $\mc{P}(\om)$ (simply because $\mc{I}\upharpoonright Y$ is a continuous preimage of $\mc{I}$).

\smallskip

For a family $\mc{H} \subset 2^X$ we will denote by $\mrm{id}(\mc{H})$ the ideal generated by the sets in $\mc{H}$. We say that an ideal $\mc{I}$ on a countably infinite set $X$ is
\begin{itemize}
\item {\em tall} if every infinite subset of $X$ contains an infinite element of $\mc{I}$;
\item a {\em P-ideal} if for every sequence $A_n\in\mc{I}$ ($n\in\om$), there is an $A\in\mc{I}$ such that $A_n\subseteq^* A$, that is, $|A_n\setminus A|<\om$ for every $n$.

\end{itemize}

We will need the following two fundamental results of descriptive set theory (see e.g. in \cite{Jech}):

\begin{thm} {\em (Shoenfield Absoluteness Theorem)}
If $V\subseteq W$ are transitive models, $\om_1^W\subseteq V$, and $r\in \om^\om\cap V$, then $\Sigma^1_2(r)$ formulas are absolute between $V$ and $W$.
\end{thm}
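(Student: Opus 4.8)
The plan is to run the classical argument: encode the $\Sigma^1_2$ statement as ``a certain tree has an infinite branch'', where the tree lies in $V$ and is computed identically in $V$ and $W$, and then invoke the absoluteness of well-foundedness. Fix a $\Sigma^1_2(r)$ sentence with $r\in\om^\om\cap V$ and put it in the form $\varphi\equiv\exists x\in\om^\om\,\psi(x)$ with $\psi$ a $\Pi^1_1(r)$ formula. I would begin by recalling the standard tree representation of $\Pi^1_1$ sets: there is an assignment $x\mapsto S_x$, where $S_x\sub\om^{<\om}$ is a tree computed recursively (hence absolutely) from $x$ and $r$, such that $\psi(x)$ holds iff $S_x$ is well-founded. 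Letting $<_{\mathrm{KB}}$ denote the Kleene--Brouwer ordering on $\om^{<\om}$, well-foundedness of $S_x$ is equivalent to $(S_x,<_{\mathrm{KB}})$ being a well-order, and, $S_x$ being countable, to the existence, for some (any) ordinal $\lam$ strictly above the order type of $(S_x,<_{\mathrm{KB}})$, of a map $h\colon\om^{<\om}\to\lam$ which is order-preserving on $S_x$, i.e.\ $h(u)<h(u')$ whenever $u,u'\in S_x$ and $u<_{\mathrm{KB}}u'$.

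Next I would set $\lam:=\om_1^W$. Observe that $\om_1^V\le\om_1^W$ always holds (a bijection in $V$ witnessing that an ordinal is countable still lies in $W$), while by hypothesis $\om_1^W\sub V$, i.e.\ every ordinal $<\lam$ belongs to $V$. Form the \emph{Shoenfield tree} $T$ on $\om\times\lam$: a node of length $n$ is a pair $(s,t)$ with $s\in\om^n$ and $t\in\lam^n$, subject to the requirement that, writing $u_0,u_1,\dots$ for a fixed recursive enumeration of $\om^{<\om}$, the partial map $u_i\mapsto t(i)$ ($i<n$) respects $<_{\mathrm{KB}}$ on those $u_i$ which the finite information $s$ already certifies to lie in $S_x$. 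Two things then hold. (i) $\varphi$ is equivalent to ``$T$ has an infinite branch'', and this equivalence holds in each of $V$ and $W$: a branch is a pair $(x,g)\in\om^\om\times\lam^\om$ from which one reads off $x$ together with a map $h$ that is order-preserving on $(S_x,<_{\mathrm{KB}})$, so $S_x$ is well-founded and $\psi(x)$ holds; conversely, if $\psi(x)$ holds then $(S_x,<_{\mathrm{KB}})$ is a well-order whose order type is below the ambient model's $\om_1$, hence below $\lam$ (using $\lam\ge\om_1^V$ when working in $V$ and $\lam=\om_1^W$ when working in $W$), so it embeds into $\lam$ and this embedding yields a branch. (ii) $T$ is the same object whether computed in $V$ or in $W$, and $T\in V$: its nodes are finite sequences of natural numbers and of ordinals below $\lam$, all of which are in $V$, and ``$\si\in T$'' is an arithmetic condition on $\si$ and $r$, hence absolute.

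It remains to note that ``$T$ has an infinite branch'' is absolute between $V$ and $W$. Any branch in $V$ is still a branch in $W$. Conversely, if $T$ has no infinite branch in $V$, then by $\mathrm{DC}$ in $V$ the strict-extension relation on $T$ is well-founded in $V$, so $T$ carries a rank function $\rho\in V\sub W$; an infinite branch of $T$ in $W$ would produce an infinite $\rho$-decreasing sequence of ordinals, which is impossible, so $T$ has no infinite branch in $W$ either. Chaining the equivalences: $\varphi^V$ holds iff $T$ has a branch in $V$ iff $T$ has a branch in $W$ iff $\varphi^W$ holds, which is the assertion of the theorem.

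The step I expect to need the most care is (ii) together with the choice of height $\lam=\om_1^W$: one must verify that the Shoenfield tree really is literally the same set when formed in $V$ and in $W$ — which is exactly what the hypothesis $\om_1^W\sub V$ delivers — and that $\lam$ is large enough for the equivalence in (i) to go through in both models. The remaining ingredients (the tree representation of $\Pi^1_1$ sets, the Kleene--Brouwer order, and the absoluteness of well-foundedness) are entirely standard.
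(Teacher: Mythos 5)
The paper states Shoenfield's theorem as a known background fact and simply refers to Jech for a proof, so there is no in-paper argument to compare against. Your proposal correctly reproduces the classical argument — tree representation of the $\Pi^1_1$ matrix via well-foundedness, the Kleene--Brouwer order, the Shoenfield tree on $\omega\times\omega_1^W$, and absoluteness of well-foundedness via rank functions — with the hypothesis $\omega_1^W\subseteq V$ used exactly where it must be, namely to guarantee that the Shoenfield tree is literally the same object when computed in $V$ and in $W$, and that its height is at least $\omega_1$ of each model.
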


\begin{cor} If $X\subseteq\mc{P}(\om)$ is an analytic or coanalytic set in the parameter $r\in\om^\om$, then the statement ``$X$ is an ideal'' is absolute for transitive models $V\subseteq W$ with $\om_1^W\subseteq V$ and $r\in V$.
\end{cor}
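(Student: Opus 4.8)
The plan is to observe that ``$X$ is an ideal'' can be expressed by a single $\Pi^1_2(r)$ sentence (in fact a finite conjunction of $\Pi^1_2(r)$ sentences), and then to invoke the Shoenfield Absoluteness Theorem directly. First I would spell out, under the running conventions of the paper, what ``$X$ is an ideal'' asserts about a set $X\sub\mc{P}(\om)$: (a) $[\om]^{<\om}\sub X$, i.e. $\forall A\,(A \text{ finite}\to A\in X)$; (b) $X$ is closed under subsets, i.e. $\forall A\,\forall B\,(B\sub A\wedge A\in X\to B\in X)$; (c) $X$ is closed under finite unions, i.e. $\forall A\,\forall B\,(A\in X\wedge B\in X\to A\cup B\in X)$; and (d) $\om\notin X$. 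Here $A,B$ range over $\mc{P}(\om)\cong 2^\om$, so these are genuine second-order sentences with the single real parameter $r$.

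Next I would carry out the complexity bookkeeping. If $X$ is analytic in $r$ then the relation ``$A\in X$'' is $\Sigma^1_1(r)$ and ``$A\notin X$'' is $\Pi^1_1(r)$; if $X$ is coanalytic in $r$ the roles are interchanged. In either case \emph{both} relations lie in $\Delta^1_2(r)$. The formulas ``$A$ finite'', ``$B\sub A$'' are arithmetical, hence $\Delta^1_2(r)$, and ``$A\cup B\in X$'' has the same complexity as $X$ since $(A,B)\mapsto A\cup B$ is continuous. Since $\Delta^1_2(r)$ is closed under finite Boolean combinations and under number quantifiers, the matrices occurring in (a), (b), (c) — which are finite disjunctions of the relations just listed — are $\Delta^1_2(r)$. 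Prefixing the (single) block of universal real quantifiers therefore puts (a), (b), (c) into $\Pi^1_2(r)$, while (d) is already $\Pi^1_1(r)$ or $\Sigma^1_1(r)$, hence $\Pi^1_2(r)$ a fortiori. As $\Pi^1_2(r)$ is closed under finite conjunction, the conjunction of (a)--(d) is a $\Pi^1_2(r)$ sentence.

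Finally, since $r\in V$ and $\om_1^W\sub V$, the Shoenfield Absoluteness Theorem applied to this $\Pi^1_2(r)$ sentence (equivalently, to its $\Sigma^1_2(r)$ negation) yields that ``$X$ is an ideal'' holds in $V$ iff it holds in $W$, which is the claim. The only point demanding care — and the nearest thing to an obstacle — is the quantifier count: passing from ``$A\in X$'' to its negation raises the analytic/coanalytic level, and one must make sure this does not push the whole sentence past $\Pi^1_2$. This is precisely why one absorbs both ``$A\in X$'' and ``$A\notin X$'' into $\Delta^1_2(r)$ \emph{before} attaching the outermost real quantifiers; beyond this routine verification there is nothing further to do.
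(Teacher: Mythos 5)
Your proof is correct and follows essentially the same route as the paper: express ``$X$ is an ideal'' as a finite conjunction of formulas, observe that since both ``$A\in X$'' and ``$A\notin X$'' are $\Delta^1_2(r)$ each conjunct is $\Pi^1_2(r)$, and then invoke Shoenfield. The only cosmetic difference is that you explicitly include the clause $\om\notin X$, which the paper omits; this changes nothing about the complexity count.
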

\begin{proof}
Let $\varphi(x,r)$ be a $\Sigma^1_1(r)$ or $\Pi^1_1(r)$ definition of $X$ ($r\in\om^\om$). Then the statement ``$X$ is an ideal'' is the conjunction of the following formulas (i) $\forall$ $a\in\mathrm{Fin}$ $\varphi(a,r)$, (ii) $\forall$ $x,y$ $ (x\nsubseteq y$ or $\neg\varphi(y,r)$ or $\varphi(x,r))$, and (iii) $\forall$ $x,y$ $(\neg\varphi(x,r)$ or $\neg\varphi(y,r)$ or $\varphi(x\cup y,r))$.
In particular, ``$X$ is an ideal'' is $\Pi^1_2(r)$ and hence we can apply the Shoenfield Absoluteness Theorem.
\end{proof}

\begin{thm} {\em (Mansfield-Solovay Theorem)}
If $A\nsubseteq L[r]$ is a $\Sigma^1_2(r)$ set, then $A$ contains a perfect subset.
\end{thm}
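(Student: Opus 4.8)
The natural route is to reduce the assertion to the corresponding dichotomy for trees on $\om\times\ka$ via the Shoenfield tree representation, and then to run the Cantor--Bendixson style derivative on the tree that goes back to Mansfield. Concretely, I would first fix, by the effective form of the Shoenfield Absoluteness Theorem, a tree $T$ on $\om\times\ka$ for a suitable ordinal $\ka\in L[r]$ (one may take $\ka=\om_1^{L[r]}$) such that $T\in L[r]$ and, computing in $V$, $A=p[T]:=\{x\in\om^\om:\exists g\in\ka^\om\ (x,g)\in[T]\}$. The crucial feature is that the witnessing tree is itself constructible from $r$.

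Next I would introduce the derivative: for a subtree $S\sbq T$, let $D(S)$ consist of those $\si\in S$ that admit two \emph{proper} extensions $\tau_0,\tau_1\in S$ with $S_{\tau_0}$ and $S_{\tau_1}$ (the subtrees of $S$ above $\tau_0,\tau_1$) ill-founded and with incompatible first coordinates. One checks that $D(S)$ is again a subtree, and that, since well-foundedness of trees on $\om\times\ka$ is absolute, $D$ is computed the same way in $L[r]$ and in $V$. Iterating, $T_0=T$, $T_{\xi+1}=D(T_\xi)$, $T_\lam=\bigcap_{\xi<\lam}T_\xi$ at limits, produces a $\sbq$-decreasing sequence of subtrees, so it stabilizes at a fixed point $T_\infty=D(T_\infty)$, and $T_\infty\in L[r]$.

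Everything now turns on whether $T_\infty$ is empty. If $T_\infty\ne\0$, then its root lies in $T_\infty=D(T_\infty)$, so $T_\infty$ is ill-founded and, more strongly, every node of $T_\infty$ has two proper extensions in $T_\infty$ with ill-founded subtrees and incompatible first coordinates; a routine recursion yields an order embedding $s\mapsto\si_s$ of $2^{<\om}$ into $T_\infty$ with $\si_{s^\frown 0}$ and $\si_{s^\frown 1}$ having incompatible first coordinates (in particular properly extending $\si_s$, so lengths strictly increase along branches). Then for each $b\in 2^\om$ the union $\bigcup_n\si_{b\upharpoonright n}$ is a branch of $T_\infty$ with first coordinate $x_b\in p[T_\infty]\sbq A$, and $b\mapsto x_b$ is a continuous injection, so its range is a perfect subset of $A$. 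If instead $T_\infty=\0$, I would show $A\sbq L[r]$, which contradicts the hypothesis and so cannot occur: given $x\in A$, pick $g$ with $(x,g)\in[T]$; since $(x,g)$ leaves $[T_\xi]$ at a least, necessarily successor, stage $\eta+1$, there is a finite initial segment $(s,t)$ of $(x,g)$ with $(s,t)\in T_\eta\bs D(T_\eta)$. Now $(T_\eta)_{(s,t)}$ is ill-founded (it contains the branch $(x,g)$), so the failure of $(s,t)\in D(T_\eta)$ forces any two ill-founded-subtree nodes of $T_\eta$ properly above $(s,t)$ to have compatible first coordinates; hence all branches of $(T_\eta)_{(s,t)}$ project to a single real, which is therefore the first coordinate of the leftmost branch of $(T_\eta)_{(s,t)}$ and equals $x$. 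Since $T_\eta,(s,t),\ka\in L[r]$ and leftmost branches of ill-founded trees on $\om\times\ka$ are computed correctly there, $x\in L[r]$, as claimed; so $T_\infty\ne\0$ and $A$ has a perfect subset.

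The step demanding the most care is the last one: one has to make sure $L[r]$ runs exactly the same derivative as $V$ — which is where absoluteness of well-foundedness for trees on $\om\times\ka$ is essential — and that at the stage where a node $(s,t)$ is discarded, $L[r]$ genuinely pins down the unique real below $(s,t)$ as a leftmost branch. Arranging a tree $T\in L[r]$ with these absoluteness properties in the first step is the other standard ingredient; granting all of this, the combinatorics of the derivative and of the perfect-set construction are routine.
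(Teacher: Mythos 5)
The paper cites this as a standard result (referring to Jech) without giving a proof, so there is no proof in the paper to compare against. Evaluated on its own terms, your argument is the standard Mansfield proof: write the $\Sigma^1_2(r)$ set as $p[T]$ for a Shoenfield tree $T\in L[r]$, run a Cantor--Bendixson style derivative that is absolute between $L[r]$ and $V$, and split into the cases $T_\infty\ne\emptyset$ (build a perfect set by the usual tree of splittings with incompatible first coordinates) and $T_\infty=\emptyset$ (show $A\subseteq L[r]$ by pinning down each $x\in A$ as the projection of a leftmost branch of a subtree in $L[r]$). The derivative, the perfect-set construction, and the leftmost-branch absoluteness argument are all correct.

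The one point you should fix is the parenthetical choice of $\ka$. The Shoenfield tree representing $A$ in $V$ must be on $\om\times\ka$ for $\ka\geq\om_1^V$, not $\ka=\om_1^{L[r]}$: for $x\in A$ one only knows that the rank of the witnessing well-founded tree is below $\om_1^V$ (indeed below $\om_1^{L[r,x,y]}$), not below $\om_1^{L[r]}$, and if $\om_1^{L[r]}$ is countable in $V$ then $p[T]$ computed with $\ka=\om_1^{L[r]}$ can be a proper subset of $A$. This matters exactly in your case $T_\infty=\emptyset$, where you need every $x\in A$ to supply a branch of $T$; with too small a $\ka$ you would only conclude $p[T]\subseteq L[r]$, which is compatible with $A\not\subseteq L[r]$. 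The fix is trivial and preserves the one property you actually use: take $\ka=\om_1^V$; since $\om_1^V$ is an ordinal of $L[r]$ and the Shoenfield tree is uniformly definable from $r$ together with that ordinal, we still have $T\in L[r]$, and the rest of the argument goes through verbatim.
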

Other than these notions and results above, we will use descriptive set theoretic tools such as $\Gamma$-completeness, $\Gamma$-hardness, etc which can all be found in \cite{Kechris}.

Let $\mathrm{Tree}=\{T\subseteq \om^{<\om}:T$ is a tree$\}$ be the usual Polish space of all trees on $\om$ (a closed subset on $\mc{P}(\om^{<\om})$) and as usual, we denote by $[T]=\{x\in \om^\om:\forall$ $n$ $x\upharpoonright n\in T\}$ the {\em body} of $T$, i.e. the set of all branches of $T$.

\section{Examples of Borel and projective ideals}\label{projid}

There are many classical examples of Borel ideals. Here we present some of those that have easily understandable definitions, and the reader can see that these examples are motivated by a wide variety of backgrounds. For the important role of these ideals, especially in characterisation results, see \cite{Hrusak}.

\smallskip
\textbf{Some $F_\sigma$ ideals:}

\smallskip
{\em Summable ideals.} Let $h:\om\fv [0,\infty)$ be a function such that $\sum_{n\in\om}
 h(n)=\infty$. The {\em summable ideal associated to $h$} is
\[ \ical_h=\bigg\{A\subseteq\om:\sum_{n\in A} h(n)<\infty\bigg\}.\]

It is easy to see that a summable ideal $\ical_h$ is tall iff
$\lim_{n\fv\infty}h(n)=0$, and that summable ideals are $F_\sigma$ P-ideals.
The {\em classical summable ideal} is $\mc{I}_{1/n}=\mc{I}_h$ where $h(n)=1/(n+1)$, or $h(0)=1$ and $h(n)=1/n$ if $n>0$.
We know that there are tall $F_\sigma$ P-ideals which are not summable ideals: Farah's example (see \cite[Example 1.11.1]{Farah}) is the following ideal:
\[ \mc{I}_F=\bigg\{A\subseteq\om:\sum_{n<\om}\frac{\min\big\{n,|A\cap [2^n,2^{n+1})|\big\}}{n^2}<\infty\bigg\}.\]

The {\em eventually different ideals.} \[\mc{ED}=\Big\{A\subseteq\om\times\om:\limsup_{n\fv\infty}|(A)_n|<\infty\Big\}\]
where $(A)_n=\{k\in\om:(n,k)\in A\}$, and $\mc{ED}_\mathrm{fin}=\mc{ED}\upharpoonright\Delta$ where $\Delta=\{(n,m)\in\om\times\om:m\le n\}$. $\mc{ED}$ and $\mc{ED}_\mathrm{fin}$ are not P-ideals.

\smallskip
The {\em van der Waerden ideal:}
\[ \mc{W}=\big\{A\subseteq\om:A\;\text{does not contain arbitrary long arithmetic progressions}\big\}.\]
Van der Waerden's well-known theorem says that $\mc{W}$ is a proper ideal. $\mc{W}$ is not a P-ideal. For some set-theoretic results about this ideal see e.g. \cite{jana1} and \cite{jana2}.

\smallskip
The {\em random graph ideal:}
\[ \mathrm{Ran}=\mathrm{id}\big(\big\{\text{homogeneous subsets of the random graph}\big\}\big)\]
where the {\em random graph} $(\om,E)$, $E\subseteq [\om]^2$ is up to isomorphism uniquely determined by the following property: If $A,B\in[\om]^{<\om}$ are nonempty and disjoint, then there is an $n\in\om\setminus(A\cup B)$ such that $\{\{n,a\}:a\in A\}\subseteq E$ and $\{\{n,b\}:b\in B\}\cap E=\0$. A set $H\subseteq\om$ is ($E$-)homogeneous iff $[H]^2\subseteq E$ or $[H]^2\cap E=\0$. $\mathrm{Ran}$ is not a P-ideal.

\smallskip
The {\em ideal of graphs with finite chromatic number}:
\[\mc{G}_\mathrm{fc}=\big\{E\subseteq [\om]^2:\chi(\om,E)<\om\big\}.\]
It is not a P-ideal.

\smallskip
{\em Solecki's ideal:} Let $\mathrm{CO}(2^\om)$ be the family of clopen subsets of $2^\om$ (it is easy to see that $|\mathrm{CO}(2^\om)|=\om$), and let $\Omega=\{A\in\mathrm{CO}(2^\om):\lam(A)=1/2\}$ where $\lam$ is the usual product measure on $2^\om$. The ideal $\mc{S}$ on $\Omega$ is generated by $\{I_x:x\in 2^\om\}$ where $I_x=\{A\in\Omega:x\in A\}$.
$\mc{S}$ is not a P-ideal.

\smallskip
\textbf{Some $F_{\sigma\delta}$ ideals:}

\smallskip
{\em Density ideals.}
Let $(P_n)_{n\in\omega}$ be a sequence of pairwise disjoint finite subsets of $\om$ and let $\vec{\mu}=(\mu_n)_{n\in\om}$ be a
sequences of measures, $\mu_n$ is concentrated on $P_n$ such that $\limsup_{n\fv\infty}\mu_n(\om)>0$. The {\em density ideal generated by $\vec{\mu}$}
is
\[ \mc{Z}_{\vec{\mu}}=\Big\{A\subseteq\om:\lim_{n\fv\infty}\mu_n(A)=0\Big\}.\]
A density ideal $\mc{Z}_{\vec{\mu}}$ is tall iff $\max\{\mu_n(\{i\}):i\in P_n\}\xrightarrow{n\fv\infty}0$, and density ideals are $F_{\sigma\delta}$ P-ideals.
The {\em density zero ideal} $\mc{Z}=\big\{A\subseteq\om:\lim_{n\fv\infty}|A\cap n|/n=0\big\}$ is a tall density ideal because let $P_n=[2^n,2^{n+1})$ and $\mu_n(A)=|A\cap P_n|/2^n$. It is easy to see that $\mc{I}_{1/n}\subsetneq\mc{Z}$, and Szemer\'edi's famous theorem implies that $\mc{W}\subseteq\mc{Z}$ (see \cite{szemeredi}). The stronger statement $\mc{W}\subseteq\mc{I}_{1/n}$ is a still open Erd\H{o}s prize problem.

\smallskip
The {\em ideal of nowhere dense subsets of the rationals:} \[ \mathrm{Nwd}=\big\{A\subseteq\mbb{Q}:\mathrm{int}(\overline{A})=\0\big\}\]
where $\mathrm{int}(\cdot)$ stands for the interior operation on subsets of the reals, and $\overline{A}$ is the closure of $A$ in $\mbb{R}$. $\mathrm{Nwd}$ is not a P-ideal.

\smallskip
The {\em trace ideal of the null ideal}: Let $\mc{N}$ be the $\sigma$-ideal of subsets of $2^\om$ with measure zero (with respect to the usual product measure). The {\em $G_\delta$-closure} of a set $A\subseteq 2^{<\om}$ is $[A]_\delta=\big\{x\in 2^\om:\exists^\infty$ $n$ $x\upharpoonright n\in A\big\}$, a $G_\delta$ subset of $2^\om$. The trace of $\mc{N}$ is defined by
\[ \mathrm{tr}(\mc{N})=\big\{A\subseteq 2^{<\om}:[A]_\delta\in \mc{N}\big\}.\]
It is a tall $F_{\sigma\delta}$ P-ideal.

\smallskip
\textbf{Some tall $F_{\sigma\delta\sigma}$ (non P-)ideals:}

\smallskip
The ideal $\mathrm{Conv}$ is generated by those infinite subsets of $\mbb{Q}\cap [0,1]$ which are convergent in $[0,1]$, in other words
\[ \mathrm{Conv}=\big\{A\subseteq \mbb{Q}\cap [0,1]:|\text{accumulation points of $A$ (in}\;\mbb{R})|<\om\big\}.\]

The Fubini product of $\mathrm{Fin}$ by itself:
\[ \mathrm{Fin}\otimes\mathrm{Fin}=\big\{A\subseteq\om\times\om:\forall^\infty\;n\in\om\;|(A)_n|<\om\big\}.\]

\textbf{Some non-tall ideals:}

\smallskip
An important $F_\sigma$ ideal:
\[ \mathrm{Fin}\otimes\{\0\}=\big\{A\subseteq\om\times\om:\forall^\infty\;n\in\om\;(A)_n=\0\big\},\]
\indent and its $F_{\sigma\delta}$ brother (a density ideal):
\[ \{\0\}\otimes\mathrm{Fin}=\big\{A\subseteq\om\times\om:\forall\;n\in\om\;|(A)_n|<\om\big\}.\]

Applying the Baire Category Theorem, it is easy to see that there are no $G_\delta$ (i.e. $\Ubf{\Pi}^0_2$) ideals and we already presented many $F_\sigma$ (i.e. $\Ubf{\Sigma}^0_2$) ideals. In general, we have Borel ideals at arbitrary high levels of the Borel hierarchy:

\begin{thm} {\em (see \cite{Cal85} and \cite{Cal88})} There are $\Ubf{\Sigma}^0_\al$- and $\Ubf{\Pi}^0_\al$-complete ideals for every $\al\geq 3$.
\end{thm}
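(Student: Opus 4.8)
The plan is to reduce the whole hierarchy to two base cases at level $3$ and climb by Fubini products. For an ideal $\mc{J}$ on $\om$ put $\mrm{Fin}\otimes\mc{J}=\{A\sub\om\times\om:\{n:(A)_n\notin\mc{J}\}\text{ is finite}\}$ and $\{\0\}\otimes\mc{J}=\{A\sub\om\times\om:\forall n\ (A)_n\in\mc{J}\}$, both again ideals. The first step is a \emph{lifting lemma}: if $\mc{J}$ is $\Ubf{\Pi}^0_\al$-complete ($\al\geq 2$) then $\mrm{Fin}\otimes\mc{J}$ is $\Ubf{\Sigma}^0_{\al+1}$-complete, and if $\mc{J}$ is $\Ubf{\Sigma}^0_\al$-complete ($\al\geq 2$) then $\{\0\}\otimes\mc{J}$ is $\Ubf{\Pi}^0_{\al+1}$-complete. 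The upper bounds are routine quantifier counting. For hardness I would use that $\{(y_n)_n:\forall^\infty n\ y_n\in Q\}$ is $\Ubf{\Sigma}^0_{\al+1}$-complete whenever $Q$ is $\Ubf{\Pi}^0_\al$-complete (one may present a $\Ubf{\Sigma}^0_{\al+1}$ set as an increasing union of $\Ubf{\Pi}^0_\al$ sets, since $\Ubf{\Pi}^0_\al$ is closed under finite unions), and that $\{(y_n)_n:\forall n\ y_n\in Q\}$ is $\Ubf{\Pi}^0_{\al+1}$-complete whenever $Q$ is $\Ubf{\Sigma}^0_\al$-complete; these two complete sets reduce to $\mrm{Fin}\otimes\mc{J}$ and $\{\0\}\otimes\mc{J}$ respectively via $(y_n)_n\mapsto A$ where $(A)_n=f(y_n)$ and $f$ is a continuous reduction of $Q$ to $\mc{J}$.

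Now the base cases. Since $\mrm{Fin}$ is $\Ubf{\Sigma}^0_2$-complete, the lemma gives that $\{\0\}\otimes\mrm{Fin}=\{A:\forall n\ (A)_n\text{ finite}\}$ is $\Ubf{\Pi}^0_3$-complete. The remaining base case, a $\Ubf{\Sigma}^0_3$-complete ideal, is the real obstacle, and I do not expect the Fubini machinery to produce it: $\mrm{Fin}\otimes\mc{J}$ is $\Ubf{\Sigma}^0_3$-complete only for a $\Ubf{\Pi}^0_2$ ideal $\mc{J}$, and by the Baire Category Theorem there are no $\Ubf{\Pi}^0_2$ ideals; moreover any coordinatewise construction on $\om\times\om$ has column-conditions that must themselves be ideals, hence of complexity at least $F_\sigma$ per column, which only ever yields $\Ubf{\Pi}^0_3$ (via $\forall n$) or $\Ubf{\Sigma}^0_4$ (via $\forall^\infty n$). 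A genuinely different, hands-on construction is therefore needed — this is what Calbrix supplies in \cite{Cal85} and \cite{Cal88}: one builds the ideal as a carefully controlled union of $G_\delta$ sets (for instance as an ideal generated by a suitable $G_\delta$ family whose ``finitely many generators'' operation provably stays within $\Ubf{\Sigma}^0_3$), and establishes $\Ubf{\Sigma}^0_3$-hardness by coding a universal $\Ubf{\Sigma}^0_3$ set into that generating structure. I expect this to be the technically demanding step; the rest is bookkeeping.

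Given a $\Ubf{\Pi}^0_3$-complete and a $\Ubf{\Sigma}^0_3$-complete ideal, a transfinite induction finishes the argument. At a successor step the lifting lemma sends a $\Ubf{\Pi}^0_\al$-complete ideal to a $\Ubf{\Sigma}^0_{\al+1}$-complete one and a $\Ubf{\Sigma}^0_\al$-complete ideal to a $\Ubf{\Pi}^0_{\al+1}$-complete one, so alternating these from both level-$3$ ideals (which also covers the ``opposite parity'' levels) yields $\Ubf{\Sigma}^0_\al$- and $\Ubf{\Pi}^0_\al$-complete ideals at every successor $\al\geq 3$. At a limit $\la$ I would fix an increasing chain of ideals $\mc{J}_0\sub\mc{J}_1\sub\cdots$ on $\om$ with $\mc{J}_k$ $\Ubf{\Pi}^0_{\al_k}$-complete for some $\al_k\nearrow\la$, $\al_k\geq 3$ (obtained from the induction hypothesis by finite direct sums with a $\mrm{Fin}$-tail). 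Then $\{A\sub\om\times\om:\forall n\ (A)_n\in\mc{J}_n\}$ is $\Ubf{\Pi}^0_\la$-complete, since its defining conditions have complexities cofinal in $\la$, while $\{A\sub\om\times\om:\exists N\ \forall n\ (A)_n\in\mc{J}_{\min(n,N)}\}$ is a $\Ubf{\Sigma}^0_\la$-complete ideal, being for each fixed $N$ a $\Ubf{\Pi}^0_{\al_N}$ ideal and these increasing with $N$. This closes the induction for all $\al\geq 3$; beyond the $\Ubf{\Sigma}^0_3$ base case, the only points requiring care are preservation of hardness in the limit constructions and the exact quantifier computations in the lifting lemma.
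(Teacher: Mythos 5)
The paper offers no proof of this theorem: it is cited from Calbrix, and the only internal hint is the remark after the $\Ubf{\Sigma}^1_n$ construction that ``the idea of the above proof'' (generating an ideal from a tall Borel ideal together with a complete subset of a perfect AD family) can be adapted to give $\Ubf{\Sigma}^0_\al$-complete ideals for $\al\geq 3$ --- a route quite different from your Fubini ladder. Within your framework, the lifting lemma is correct, $\{\0\}\otimes\mrm{Fin}$ is indeed $\Ubf{\Pi}^0_3$-complete, the successor step by alternating $\mrm{Fin}\otimes(\cdot)$ and $\{\0\}\otimes(\cdot)$ is sound, the limit $\Ubf{\Pi}^0_\la$ ideal $\{A:\forall n\ (A)_n\in\mc{J}_n\}$ is complete via the sectionwise reduction $x\mapsto A$, $(A)_n=f_n(x)$, and you rightly flag the $\Ubf{\Sigma}^0_3$ base case as an obstruction that needs a genuinely new construction.

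The unacknowledged gap is $\Ubf{\Sigma}^0_\la$-\emph{hardness} at limit levels. You set $\mc{I}_\la=\{A:\exists N\,\forall n\ (A)_n\in\mc{J}_{\min(n,N)}\}=\bigcup_N C_N$ with $C_N=\{A:\forall n\ (A)_n\in\mc{J}_{\min(n,N)}\}$ and justify completeness only by noting that each $C_N$ is a $\Ubf{\Pi}^0_{\al_N}$ ideal increasing with $N$. That gives the upper bound, but an increasing union of $\Ubf{\Pi}^0_{\al_N}$-complete sets need not be $\Ubf{\Sigma}^0_\la$-complete, and the obvious reduction fails: given $B=\bigcup_N B_N$ with $B_N\in\Ubf{\Pi}^0_{\al_N}$ increasing and $f_n$ a continuous reduction of $B_n$ to $\mc{J}_n$, send $x\mapsto A(x)$ with $(A(x))_n=f_n(x)$. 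For $A(x)$ to land in $C_N$ one needs $f_n(x)\in\mc{J}_N$ for \emph{all} $n\geq N$, whereas $f_n$ only controls membership in the larger ideal $\mc{J}_n\supsetneq\mc{J}_N$; so $x\in B_N$ does not give $A(x)\in C_N$, and $x\in B$ does not give $A(x)\in\mc{I}_\la$. The difficulty is structural: the $\forall n$ in $C_N$ pins an entire tail of sections to the single small ideal $\mc{J}_N$, which is strictly stronger than what any coordinatewise reduction produces, and I do not see how to repair it within the Fubini framework (replicating one coordinate collapses the target to $\mc{J}_0$; diagonal or eventually-in variants push the complexity up to $\Ubf{\Sigma}^0_{\la+1}$). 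This is the same obstruction you correctly diagnose at level $3$ (no $\Ubf{\Pi}^0_2$ ideals to feed into $\mrm{Fin}\otimes(\cdot)$), recurring at every limit: Calbrix's hands-on constructions appear to be needed on the $\Ubf{\Sigma}$ side at each limit $\la$, not only at the base.
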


About ideals on the ambiguous levels of the Borel hierarchy see \cite{Eng}.

\smallskip
We also present some (co)analytic examples.

\begin{thm} {\em (see \cite[page 321]{zaf})}
For every $x\in\om^\om$ let $I_x=\{s\in \om^{<\om}:x\upharpoonright |s|\nleq s\}$ where $\leq$ is the coordinatewise ordering on every $\om^n$. Then the ideal on $\om^{<\om}$ generated by $\{I_x:x\in\om^\om\}$ is $\Ubf{\Sigma}^1_1$-complete.
\end{thm}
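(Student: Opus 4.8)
The plan is to show that $J := \mathrm{id}(\{I_x : x \in \om^\om\})$ is $\Ubf{\Sigma}^1_1$, and then to reduce a known $\Ubf{\Sigma}^1_1$-complete set to $J$. For the upper bound, note that $s \in I_x$ is a closed condition in $(x,s)$, so $I_x$ depends continuously on $x$; a set $A \subseteq \om^{<\om}$ lies in $J$ iff there exist finitely many $x_0,\dots,x_{k-1} \in \om^\om$ with $A \subseteq I_{x_0} \cup \dots \cup I_{x_{k-1}}$, i.e. iff $\exists k\, \exists (x_i)_{i<k}\, \forall s\, (s \in A \Rightarrow \exists i<k\; x_i \upharpoonright |s| \nleq s)$. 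The matrix is Borel (indeed arithmetical) in the real parameters $(x_i)_{i<k}$ and in $A$, so after the existential real quantifier this is $\Ubf{\Sigma}^1_1$. (One should check that $J$ really is a proper ideal: $[\om^{<\om}]^{<\om} \subseteq J$ because any finite set of sequences is covered by a single $I_x$ for $x$ chosen to grow fast enough on the relevant initial segments, and $\om^{<\om} \notin J$ since finitely many $x_i$ cannot dominate all of $\om^{<\om}$ — for any $x_0,\dots,x_{k-1}$ the sequence $s$ with $s(j) = \max_i x_i(j)$ for $j < n$ avoids all $I_{x_i}$, so arbitrarily long such $s$ exist.)

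For $\Ubf{\Sigma}^1_1$-hardness I would exhibit a continuous reduction from a canonical $\Ubf{\Sigma}^1_1$-complete set. The natural candidate, given the shape of $J$, is the ill-foundedness / "has an infinite branch" set: $\mathrm{IF} = \{T \in \mathrm{Tree} : [T] \neq \0\}$, which is $\Ubf{\Sigma}^1_1$-complete. The idea is that $x \in [T]$ should translate into "$T$ (coded as a subset of $\om^{<\om}$) is covered by $I_x$". First I would fix a recursive bijection identifying $\om^{<\om}$ with $\om$ so that subsets of $\om^{<\om}$ live in $\mc{P}(\om) \simeq 2^\om$, and think of a tree $T \subseteq \om^{<\om}$ as an element of that space. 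The key observation is: for $x \in \om^\om$, the complement $\om^{<\om} \setminus I_x = \{s : x\upharpoonright|s| \le s\}$ is exactly the tree of sequences that coordinatewise dominate the corresponding initial segment of $x$; call it $T_x$. So $T \subseteq I_x$ iff $T \cap T_x = \0$ iff $T$ contains no sequence dominating an initial segment of $x$. I would build a continuous map $T \mapsto \widehat{T}$ so that $\widehat{T} \in J$ exactly when $[T] \neq \0$: given a branch $b \in [T]$, the "spreading out" map $s \mapsto$ (a sequence built from $b\upharpoonright|s|$ and $s$) lands $\widehat{T}$ inside a single $I_x$ with $x$ coded from $b$; conversely if $\widehat T \in J$, a covering by finitely many $I_{x_i}$ yields, by a König-type argument on the finite branching induced by "which $I_{x_i}$ covers this node", an infinite branch of $T$.

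The main obstacle I anticipate is making the reduction genuinely continuous and the "conversely" direction genuinely work: covering by $I_{x_0} \cup \dots \cup I_{x_{k-1}}$ is a $k$-fold (not $1$-fold) cover, so a node of $\widehat{T}$ may be caught by different $x_i$'s along different branches, and I must arrange the encoding so that a finite cover still forces some single $x$ to dominate along a whole branch — this is where a pigeonhole/finitely-branching argument on $T$ (assuming, harmlessly, that $T$ is finitely branching, or first reducing to finitely-branching trees, or to trees on $2$) is needed, and where the precise combinatorics of $\le$ coordinatewise interacts with the cover. A clean way to sidestep part of this is to restrict the source of the reduction to trees $T \subseteq n_0^{<\om} \cdot (n_1)^{\cdots}$ with prescribed finite branching (still $\Ubf{\Sigma}^1_1$-complete), so that a finite subcover can be refined, level by level, to a single $I_x$ along a branch extracted by König's lemma. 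I would check that with this restriction the forward and backward implications both go through, and that the map $T \mapsto \widehat T$ is visibly continuous because membership of a fixed node of $\widehat T$ depends on only finitely much of $T$.
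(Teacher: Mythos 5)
The paper does not supply a proof of this theorem; it is cited directly from Zafrany's article. So there is nothing in the paper to compare against, and I can only assess your argument on its own terms.

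Your upper bound computation is correct, but it can be streamlined by an observation you missed: $I_{x_0}\cup\cdots\cup I_{x_{k-1}}=I_y$ where $y(j)=\max_{i<k}x_i(j)$, equivalently $T_{x_0}\cap\cdots\cap T_{x_{k-1}}=T_y$. Thus the generating family is already closed under finite unions, so $J=\{A\subseteq\om^{<\om}:\exists\,y\in\om^\om\;|A\cap T_y|<\om\}$. Besides making the $\Ubf{\Sigma}^1_1$ bound transparent, this dissolves the ``$k$-fold cover'' complication you flag in the hardness direction: any finite cover by $I_{x_i}$'s is a single $I_y$, so no pigeonhole over ``which $x_i$ caught which node'' is needed at all.

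The hardness direction is where the genuine gap lies. You never actually construct the ``spreading out'' map $T\mapsto\widehat T$ or verify either implication --- what you present is a plan, not a proof --- and the shortcut you propose to make the plan work is incorrect. You assert that restricting the source of the reduction to finitely branching trees (or trees on a prescribed product $\prod_k n_k$) leaves a ``still $\Ubf{\Sigma}^1_1$-complete'' set. It does not: by K\"onig's lemma a finitely branching tree has an infinite branch if and only if it is infinite, so ill-foundedness for such trees is a $\Ubf{\Pi}^0_2$ condition, and reducing from it could at best establish that $J$ is $\Ubf{\Pi}^0_2$-hard. The source has to be the set of ill-founded trees on $\om$ with unrestricted branching --- precisely the regime where K\"onig fails --- and that is exactly where the encoding must do real work. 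Until a reduction for arbitrary trees on $\om$ is written out and both implications are checked, the hardness half remains unproved.
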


\begin{thm}
\label{t:complete}
The ideal of graphs without infinite complete subgraphs,
\[ \mc{G}_\mathrm{c}=\big\{E\subseteq [\om]^2:\forall\;X\in [\om]^\om\;[X]^2\nsubseteq E\big\}\]
is a $\Ubf{\Pi}^1_1$-complete (in $\mc{P}([\om]^2)$), tall,  non P-ideal.
\end{thm}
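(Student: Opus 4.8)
The statement comprises four assertions---that $\mc{G}_\mathrm{c}$ is a proper ideal, that it is tall, that it is not a P-ideal, and that it is $\Ubf{\Pi}^1_1$-complete---and the plan is to dispatch them in that order. For the ideal axioms, $[[\om]^2]^{<\om}\subseteq\mc{G}_\mathrm{c}$ is immediate since $[X]^2$ is infinite for infinite $X$, downward closure under $\subseteq$ is trivial, and properness follows from $X=\om$ witnessing $[\om]^2\notin\mc{G}_\mathrm{c}$. The one clause with real content is closure under finite unions: given $E_0,E_1\in\mc{G}_\mathrm{c}$ and (toward a contradiction) an $X\in[\om]^\om$ with $[X]^2\subseteq E_0\cup E_1$, I would $2$-colour $[X]^2$ by membership in $E_0$ versus $E_1\setminus E_0$ and invoke the infinite Ramsey theorem $\om\to(\om)^2_2$ to extract an infinite $Y\subseteq X$ with $[Y]^2$ monochromatic, hence $[Y]^2\subseteq E_0$ or $[Y]^2\subseteq E_1$, contradicting $E_0,E_1\in\mc{G}_\mathrm{c}$. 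This Ramsey application is the one place an external theorem is needed; everything else is elementary.

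For tallness, given an infinite $A\subseteq[\om]^2$ viewed as a graph: if $A$ has no infinite complete subgraph then $A\in\mc{G}_\mathrm{c}$ and there is nothing to do; otherwise fix $X\in[\om]^\om$ with $[X]^2\subseteq A$, write $X=\{x_0<x_1<\dots\}$, and let $B=\{\{x_{2i},x_{2i+1}\}:i\in\om\}\subseteq A$, an infinite matching. Since every vertex has degree at most $1$ in $B$, the graph $B$ is triangle-free, a fortiori has no infinite clique, so $B\in\mc{G}_\mathrm{c}$ is the required infinite subset. For ``not a P-ideal'', put $A_n=\{\{n,k\}:k\in\om\setminus\{n\}\}$ (the star at $n$); each $A_n$ is triangle-free, hence in $\mc{G}_\mathrm{c}$. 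If some $A\in\mc{G}_\mathrm{c}$ satisfied $A_n\subseteq^* A$ for all $n$, choose for each $n$ a bound $g(n)$ with $\{n,k\}\in A$ whenever $k\ge g(n)$, $k\ne n$, and recursively set $n_0=0$ and $n_{j+1}=\max(\{n_j+1\}\cup\{g(n_i):i\le j\})$; then $Y=\{n_j:j\in\om\}$ is infinite and $[Y]^2\subseteq A$ because $n_j\ge g(n_i)$ for $i<j$---contradicting $A\in\mc{G}_\mathrm{c}$. So $\{A_n:n\in\om\}$ has no pseudo-union inside $\mc{G}_\mathrm{c}$.

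For the descriptive complexity, membership is easy: $E\in\mc{G}_\mathrm{c}$ iff $\forall X\,(X\in[\om]^\om\to[X]^2\nsubseteq E)$, and $\{(X,E):X\in[\om]^\om,\ [X]^2\subseteq E\}$ is closed in $[\om]^\om\times\mc{P}([\om]^2)$, so $\mc{G}_\mathrm{c}$ is $\Ubf{\Pi}^1_1$. For hardness I would continuously reduce the $\Ubf{\Pi}^1_1$-complete set $\mrm{WF}=\{T\in\mathrm{Tree}:[T]=\emptyset\}$ of well-founded trees (see \cite{Kechris}). Working over $\om^{<\om}$ and transferring to $[\om]^2$ via the fixed recursive bijection $\om^{<\om}\leftrightarrow\om$, associate to a tree $T$ the graph
\[ E_T=\big\{\{s,t\}: s,t\in T,\ s\ne t,\ s\subsetneq t\ \text{or}\ t\subsetneq s\big\}.\]
Whether $\{s,t\}\in E_T$ depends only on membership of $s$ and $t$ in $T$, so $T\mapsto E_T$ is continuous. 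The point to verify carefully is that an infinite clique in $E_T$ is precisely an infinite $\subsetneq$-chain contained in $T$; on such a chain the length map is strictly monotone, so the chain has order type $\om$ with lengths tending to infinity, and hence its union is an infinite branch through $T$ (using that $T$ is a tree). Conversely $x\in[T]$ yields the infinite clique $\{x\upharpoonright n:n\in\om\}$. Thus $E_T$ has an infinite clique iff $T$ is ill-founded, i.e. $E_T\in\mc{G}_\mathrm{c}$ iff $T\in\mrm{WF}$, establishing $\Ubf{\Pi}^1_1$-hardness and, with the upper bound, $\Ubf{\Pi}^1_1$-completeness.

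I do not expect a serious obstacle in any of this: the only genuinely non-elementary ingredient is the Ramsey-theorem step used for closure under finite unions, and the only spot requiring a moment's care is the verification in the reduction that an infinite chain of tree-nodes has unbounded lengths, so that its union is an honest infinite branch.
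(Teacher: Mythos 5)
Your proof is correct, and it differs from the paper's in a substantive way. For the ``not a P-ideal'' part, the paper uses the sets $E_n=\{\{k,m\}:k\le n,\,m\ne k\}$ and cites \cite{Meza}; you use the stars $A_n=\{\{n,k\}:k\ne n\}$ with the explicit recursive extraction of a clique from any pseudo-union --- essentially the same idea, since $E_n=A_0\cup\dots\cup A_n$, but you write out the argument the paper only cites. The genuine divergence is in the $\Ubf{\Pi}^1_1$-hardness reduction. The paper goes through an intermediate set $\mathrm{WF}'$ consisting of well-founded trees of strictly increasing sequences satisfying an extra minimality condition, constructs a reduction $\mathrm{WF}\le_W\mathrm{WF}'$ via a prime-power embedding $j$, and then reduces $\mathrm{WF}'$ to $\mc{G}_\mathrm{c}$ by $T\mapsto\bigcup\{[\ran(t)]^2:t\in T\}$, a graph on $\om$ whose cliques must be untangled from the range sets. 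You instead reduce $\mathrm{WF}$ to $\mc{G}_\mathrm{c}$ in a single step by taking the \emph{comparability graph} of $(T,\subsetneq)$ on the vertex set $\om^{<\om}$; the key observation that a pairwise-comparable subset of a tree is a chain with unbounded lengths (and hence, by downward closure of $T$, yields a branch) makes the correspondence between infinite cliques and infinite branches immediate, with no need for the intermediate set or the range-set bookkeeping. Your route is shorter and conceptually cleaner; what the paper's two-step reduction buys is a graph whose vertex set is literally $\om$ rather than $\om^{<\om}$, at the cost of the auxiliary $\mathrm{WF}'$ machinery. You also explicitly verify that $\mc{G}_\mathrm{c}$ is closed under finite unions via Ramsey's theorem --- a point the paper passes over silently but which is a real (if standard) ingredient in the statement that $\mc{G}_\mathrm{c}$ is an ideal at all.
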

\begin{proof}
Tallness is trivial. If for every $n\in\om$, we define $E_n=\{\{k,m\}:k\leq n,m\not=k\}\in\mc{G}_\mrm{c}$ and $E_n\subseteq^* E\subseteq [\om]^2$ for every $n$, then $E$ contains a complete subgraph (see also in \cite{Meza}), hence $\mc{G}_\mrm{c}$ is not a P-ideal.

Let $\mathrm{WF}=\{T\in\mathrm{Tree}:[T]=\0\}$ be the $\Ubf{\Pi}^1_1$-complete set of well-founded trees. Furthermore, let $\mathrm{Tree}'$ be the family of those trees $T$ such that (i) every $t\in T$ is strictly increasing and  (ii) if $\{t\in T:n\in\mathrm{ran}(t)\}\ne\0$ then it has a $\subseteq$-minimal element ($n\in\om$). Then it is not hard to see that $\mathrm{Tree}'$ is also closed in $\mc{P}(\om^{<\om})$ hence Polish. Finally, let $\mathrm{WF}'=\{T\in\mathrm{Tree}':[T]=\0\}$, clearly, it is also $\Pi^1_1$.

We will construct Wadge-reductions $\mathrm{WF}\leq_W\mathrm{WF}'\leq_W \mc{G}_\mathrm{c}$.

\smallskip
$\mathrm{WF}\leq_W\mathrm{WF}'$: Fix an order preserving isomorphism $j$ between $\om^{<\om}$ and a $T_0\in\mathrm{Tree}'$. More precisely, for a $t=(k_0,k_1,\dots,k_{m-1})\in\om^{<\om}$ let $j(t)=(p^{1}_{k_0},p^{1}_{k_0}p^{2}_{k_1},\dots,p^{1}_{k_0}p^{2}_{k_1}\dots p^{m}_{k_{m-1}})$ where $p_i$ denotes the $i$th prime number. Then $j$ is one-to-one, order preserving, and $T_0=j[\om^{<\om}]$ is a tree containing strictly increasing sequences. To show that $T_0$ satisfies (ii), assume that $n\in\ran(j(t))$ for some $n\in\om$ and $t\in\om^{<\om}$. Then, by the definition of $j$, $n=p^{1}_{k_0}p^{2}_{k_1}\dots p^{m}_{k_{m-1}}$ where $s=(k_0,k_1,\dots,k_{m-1})\leq t$, and if $n\in\ran(j(t'))$ for some $t'\in\om^{<\om}$ then $s\leq t'$, hence $j(s)$ is $\subseteq$-minimal in $\{h\in T_0:n\in\ran(h)\}$.

The map $\mathrm{Tree}\to\mathrm{Tree}'$, $T\mapsto j[T]$ is a continuous reduction of $\mrm{WF}$ to $\mrm{WF}'$. Continuity is trivial, and also that $[T]=\0$ iff $[j[T]]\ne\0$, in other words, $T\in\mathrm{WF}$ iff $j[T]\in\mathrm{WF}'$.

\smallskip
$\mathrm{WF}'\leq_W\mc{G}_\mathrm{c}$: For every $T\in\mathrm{Tree}'$ let $E_T=\bigcup\{[\mathrm{ran}(t)]^2:t\in T\}$. We show that the function $T\mapsto E_T$ is continuous. If $u,v\in \big[[\om]^2\big]^{<\om}$ are disjoint then it is easy to see that the preimage of the basic clopen set $[u,v]=\{E\subseteq [\om]^2:u\subseteq E,v\cap E=\0\}\subseteq\mc{P}([\om]^2)$ is
\[ \big\{T\in\mathrm{Tree}':\big(\forall\;\{x,y\}\in u\;\exists\;t\in T\;x,y\in \mathrm{ran}(t)\big)\;\text{and}\;\big(\forall\;t\in T\;v\cap [\mathrm{ran}(t)]^2=\0\big)\big\}.\]
Although, as the collection of the sets satisfying the second part of the condition is a countable intersection of clopen sets, this set seems to be closed (and it is enough to prove that $\mc{G}_\mrm{c}$ is $\Ubf{\Pi}^1_1$-complete), actually, it is open in $\mathrm{Tree}'$: Let $m=\max(\cup v)+1$. Then
the set $\{T\in\mathrm{Tree}':\forall$ $t\in T$ $v\cap [\mathrm{ran}(t)]^2=\0\}$ is the intersection of $\mathrm{Tree}'$ and the clopen set (in $\mc{P}(\om^{<\om})$)
\[ \big[\0,\big\{t\in m^{\leq m}:t\;\text{is strictly increasing and}\;v\cap [\mathrm{ran}(t)]^2\ne\0\big\}\big].\]
The function $T\mapsto E_T$ is a reduction of $\mathrm{WF}'$ to $\mc{G}_\mathrm{c}$: Clearly, if $T\in\mathrm{Tree}'$ and $x\in [T]$ then $X=\mathrm{ran}(x)\in [\om]^\om$ shows that $E_T\notin\mc{G}_\mathrm{c}$ (i.e. $[X]^2\subseteq E$). Conversely, if $[X]^2\subseteq E_T$ and  $X=\{k_0<k_1<\dots\}$, then for every $n$ there is a $t_n\in T$ such that $k_n,k_{n+1}\in\mathrm{ran}(t_n)$, we can assume that $t_n$ is minimal in $\{s\in T:k_{n+1}\in\mathrm{ran}(s)\}$. It yields that $t_0\subseteq t_1\subseteq t_2\subseteq\dots$ is an infinite chain in $T$.
\end{proof}

In the following example, we show that a seemingly ``very'' $\Pi^1_2$ definition can also give us a $\Ubf{\Pi}^1_1$-complete ideal.

\begin{thm}
The ideal
\[ \mc{I}_0=\big\{A\subseteq\om\times\om:\forall\;X,Y\in [\om]^\om\;\exists\;X'\in [X]^\om\;\exists\;Y'\in [Y]^\om\;A\cap (X'\times Y')=\0\big\}\]
is a $\Ubf{\Pi}^1_1$-complete (in $\mc{P}(\om\times\om)$), tall, non P-ideal.
\end{thm}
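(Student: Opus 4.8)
The plan is to establish the four assertions about $\mc{I}_0$ — that it is an ideal, is tall, is not a P-ideal, and is $\Ubf{\Pi}^1_1$-complete — largely independently, with the $\Ubf{\Pi}^1_1$ upper bound carrying the real weight.

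For the first three I would argue purely combinatorially. That $\mc{I}_0$ is an ideal is immediate: downward closure is trivial, and for $A,B\in\mc{I}_0$ and $X,Y\in[\om]^\om$ one applies the defining clause first to $A,X,Y$ and then to $B$ and the resulting pair to obtain $X',Y'$ witnessing $A\cup B\in\mc{I}_0$ (finite sets are in $\mc{I}_0$; $\om\times\om$ is not). Tallness: I would check that every subset of a single row $\om\times\{k\}$ or column $\{k\}\times\om$ lies in $\mc{I}_0$ (delete $k$ from $X$ or $Y$), and that every \emph{locally finite} $B\sub\om\times\om$ — one all of whose rows and columns are finite — lies in $\mc{I}_0$ by a leapfrog construction: given $X,Y$, recursively choose $y_k\in Y$ avoiding the finite set $\bigcup_{i<k}\{m:(x_i,m)\in B\}$ and then $x_k\in X$ avoiding the finite set $\bigcup_{j\le k}\{n:(n,y_j)\in B\}$, so $(\{x_i:i\in\om\}\times\{y_j:j\in\om\})\cap B=\0$. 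Since every infinite $Z\sub\om\times\om$ contains an infinite subset that is either locally finite or a subset of a row or column, $\mc{I}_0$ is tall. For ``not a P-ideal'' I would use the \emph{crosses} $A_n=(\{n\}\times\om)\cup(\om\times\{n\})$: each is in $\mc{I}_0$ (delete $n$ from $X$ and $Y$), while if $A\in\mc{I}_0$ satisfied $A_n\subseteq^* A$ for all $n$ then every row and column of $(\om\times\om)\setminus A$ would be finite, so the leapfrog construction would yield infinite $X,Y$ with $X\times Y\sub A$, contradicting $A\in\mc{I}_0$; hence $\{A_n:n\in\om\}$ has no pseudo-union in $\mc{I}_0$.

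For $\mc{I}_0\in\Ubf{\Pi}^1_1$ — which is the point of the whole statement, since the given definition only exhibits $\mc{I}_0$ as $\Ubf{\Pi}^1_2$ — the plan is to invoke a canonical form of Ramsey's theorem for the Cartesian product $\om\times\om$: for infinite $X,Y$ and any $2$-colouring of $X\times Y$ there are infinite $X^*\sub X$, $Y^*\sub Y$ on which the colouring is canonical, so that after shrinking $A\cap(X^*\times Y^*)$ is $\0$, or all of $X^*\times Y^*$, or one of the finitely many order-determined patterns $\{(a,b)\in X^*\times Y^*:a<b\}$, $\{a>b\}$, $\{a\le b\}$, $\{a\ge b\}$, $\{a=b\}$, $\{a\ne b\}$. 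Each of these patterns either contains an infinite empty combinatorial sub-rectangle (namely $\0$ and the diagonal $\{a=b\}$) or provably does not (the rest). It follows that $A\notin\mc{I}_0$ — i.e. there are infinite $X,Y$ with no infinite empty sub-rectangle inside $X\times Y$ — if and only if $A\cap(X\times Y)$ equals one of the ``bad'' canonical patterns for some infinite $X,Y$: a witness refines to such a rectangle by the canonical theorem, and conversely a bad pattern has no empty sub-rectangle. That last condition is ``$\exists X,Y\in[\om]^\om$'' followed by an arithmetic matrix, hence $\Ubf{\Sigma}^1_1$, so $\mc{I}_0\in\Ubf{\Pi}^1_1$. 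I expect the delicate point — and the main obstacle of the whole proof — to be pinning down exactly which canonical patterns can occur for a product colouring and checking that each is unambiguously good or bad for the empty-rectangle property after the appropriate shrinking.

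For $\Ubf{\Pi}^1_1$-hardness I would build a continuous reduction of $\mrm{WF}'$ (the $\Ubf{\Pi}^1_1$-complete set from the proof of Theorem \ref{t:complete}) to $\mc{I}_0$, paralleling the reduction $\mrm{WF}'\le_W\mc{G}_{\mathrm c}$: send $T\in\mrm{Tree}'$ to $A_T=\bigcup\{\ran(t)\times\ran(t):t\in T\}$ (continuity as in Theorem \ref{t:complete}). If $x\in[T]$ then $X=Y=\ran(x)$ satisfies $X\times Y\sub A_T$, so $A_T\notin\mc{I}_0$. Conversely, assume $[T]=\0$ and fix $X,Y\in[\om]^\om$; by the minimal-element property of $\mrm{Tree}'$ let $\tau(k)$ be the $\sub$-least node of $T$ whose range contains $k$, whenever $k$ occurs in $T$, so that $(k,l)\in A_T$ iff $\tau(k),\tau(l)$ are both defined and $\sub$-comparable. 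If infinitely many elements of $X$ (or of $Y$) occur in no node we are done; otherwise thin so that $\tau\restriction X$, $\tau\restriction Y$ are injective, and then — since $[T]=\0$, so no infinite subset of $T$ is a chain — thin by Ramsey's theorem so that $\{\tau(n):n\in X\}$ and $\{\tau(m):m\in Y\}$ become infinite antichains. It then suffices to find infinite $X'\sub X$, $Y'\sub Y$ with every $\tau(n)$ ($n\in X'$) $\sub$-incomparable to every $\tau(m)$ ($m\in Y'$), and this follows from the elementary fact that any two infinite antichains $P,Q$ in a tree admit infinite pointwise-incomparable $P'\sub P$, $Q'\sub Q$ (split $P\cap Q$ if it is infinite; otherwise discard $P\cap Q$ and the root of the downward closure of $P\cup Q$, partition $P$ and $Q$ according to the child of that root below which each node sits, note that a node from the $i$-th block of $P$ and a node from the $j$-th block of $Q$ can be comparable only when $i=j$, and in each case select one representative per block over two disjoint infinite sets of block-indices). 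Hence $A_T\in\mc{I}_0$, so $\mrm{WF}'\le_W\mc{I}_0$; combined with the previous paragraph, $\mc{I}_0$ is $\Ubf{\Pi}^1_1$-complete.
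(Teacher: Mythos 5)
Your proposal is broadly correct and the overall strategy matches the paper on the $\Ubf{\Pi}^1_1$ upper bound, but you take a genuinely different route for the hardness half, so let me compare. For membership, the paper makes your Ramsey idea completely precise and it is worth recording: it fixes $X,Y$, thins so that $x_0<y_0<x_1<y_1<\cdots$ (in particular $X\cap Y=\emptyset$), and then colours pairs $\{m,n\}$ with $m<n$ of $\omega$ by the \emph{pair} $\big(\chi_A(x_m,y_n),\chi_A(x_n,y_m)\big)\in 2\times 2$. Ordinary four-colour Ramsey, plus an alternating split of the homogeneous set into $X',Y'$, gives that $A\cap(X'\times Y')$ is empty, all of $X'\times Y'$, or one of the two triangles $T^\uparrow(X',Y')$, $T^\downarrow(X',Y')$; since ``full'' contains each triangle, this collapses to the clean $\Ubf{\Sigma}^1_1$ characterisation $A\notin\mc{I}_0\iff\exists X,Y\,(T^\uparrow(X,Y)\sub A\ \text{or}\ T^\downarrow(X,Y)\sub A)$. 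So the ``delicate point'' you anticipated is not canonical Ramsey at all --- it is just the choice of this $4$-colouring; your list of six order patterns is over-inclusive (after interleaving, $a=b$ never occurs, so $\le/<$ and $\ge/>$ coincide), and aiming for ``$\supseteq$ a triangle'' rather than ``$=$ a bad pattern'' is slightly tidier.

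For $\Ubf{\Pi}^1_1$-hardness the paper reduces from the set $S=\{C\in\mc{K}(2^\om):\forall x\in C\ \forall^\infty n\ x(n)=0\}$ of thin compact sets, via $A_C=\{(m,n):|s_m|>n,\ s_m(n)=1,\ [s_m]\cap C\ne\0\}$. Your reduction from $\mathrm{WF}'$ via $A_T=\bigcup\{\ran(t)\times\ran(t):t\in T\}$ is a legitimately different route, and it is arguably the more natural one here because it directly parallels the $\mathrm{WF}'\le_W\mc{G}_\mathrm{c}$ reduction already set up in the proof of the preceding theorem (which you re-use for both continuity and the minimal-node function $\tau$). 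Both approaches are fine; yours buys uniformity with the $\mc{G}_\mathrm{c}$ argument and avoids the compactness/convergence discussion, while the paper's keeps the two hardness proofs independent and perhaps exposes a different flavour of $\Ubf{\Pi}^1_1$-completeness.

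Two small points to tighten. First, the antichain fact you invoke (any two infinite antichains $P,Q$ in a tree on $\omega$ admit infinite pointwise-incomparable $P'\sub P$, $Q'\sub Q$) is true, but the ``blocks above the root'' argument as written breaks down when only finitely many children of the root receive elements of $P\cup Q$; you then cannot pick representatives over two disjoint infinite index sets and must either recurse into a block (using well-foundedness to terminate) or argue differently. A cleaner proof: if some $p\in P$ has infinitely many $q\in Q$ strictly above it, then those $q$'s together with $P\setminus\{p\}$ are already pointwise incomparable (a $q\supsetneq p$ comparable to $p'\ne p$ would force $p,p'$ comparable); symmetrically for $Q$; in the remaining case every $p$ is comparable to only finitely many $q$'s and vice versa (using that an antichain contributes at most one element below any fixed node), so a greedy back-and-forth works. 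Second, for the ideal/tall/non-P parts your witnesses differ from the paper's (locally finite sets and crosses $(\{n\}\times\om)\cup(\om\times\{n\})$ versus injective partial functions and the columns $n\times\om$), but both choices lead to the same leapfrog-style verification and are correct.
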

\begin{proof}
Tallness is trivial because injective partial functions from $\om$ to $\om$ belong to $\mc{I}_0$.  The failure of the P property is also easy: Consider the sets $n\times\om \in \mc{I_0}$. If for some $A$ we have $n\times\om \subseteq^* A$ for every $n$ then every vertical section of $A$ is co-finite, and such a set is clearly $\mc{I}_0$-positive.

First we show that this ideal is $\Ubf{\Pi}^1_1$, for which the next claim is clearly enough. For $X,Y\in[\om]^\om$ define $T^\uparrow(X,Y)=\{(n,k)\in X\times Y: n<k\}$ and $T^\downarrow(X,Y)=\{(n,k)\in X\times Y: n>k\}$.

\begin{claim}
$A\in\mc{I}_0$ iff for every infinite $X$ and $Y$ the set $A$ does not contain $T^\uparrow(X,Y)$ or $T^\downarrow(X,Y)$.
\end{claim}
\begin{proof}[Proof of the Claim.]
The ``only if'' part is trivial. Conversely, assume that $A\notin\mc{I}_0$, i.e. there exist $X,Y \in [\omega]^\omega$ such that $A \cap (X' \times Y') \ne \emptyset$ for every $X' \in [X]^\omega$ and $Y' \in [Y]^\omega$.  Fix increasing enumerations $X=\{x_0<x_1<x_2<\dots\}$ and $Y=\{y_0<y_1<y_2<\dots\}$. By shrinking the sets $X$ and $Y$, we can assume that $x_0<y_0<x_1<y_1< \dots$, in particular $X \cap Y=\emptyset$. Consider the following coloring $c: [\omega]^2\to 2\times 2$: for $m<n$ let $c(m,n)=(\chi_A(x_m,y_n),\chi_A(x_n,y_m))$ where $\chi_A(x,y)=1$ iff $(x,y)\in A$.

Applying Ramsey's theorem, there exists an infinite homogeneous subset $S \subseteq \omega$. Let $S=Z\cup W$ be a partition into infinite subsets such that the elements of $Z$ and $W$ follow alternatingly in $S$. Then the elements of the sets $X'=\{x_m:m \in Z\}$ and $Y'=\{y_n: n \in W\}$ follow alternatingly in $\omega$ as well.

$S$ cannot be homogeneous in color $(0,0)$, otherwise $A \cap (X' \times Y')=\emptyset$ would hold. Similarly, if $S$ is homogeneous in color $(1,1)$ then $X' \times Y' \subset A$ and we are done. Now suppose that $S$ is homogeneous in color $(1,0)$ (for $(0,1)$ the same argument works). If $x_m \in X',y_n \in Y'$ and $x_m<y_n$ then $m<n$ because $Z\cap W=\0$. Hence by the homogeneity of $S$ we can conclude $(x_m,y_n)\in A$, so $T^\uparrow(X',Y')\subseteq A$.
\end{proof}

Now we show that $\mathcal{I}_0$ is $\Ubf{\Pi}^1_1$-complete. We will use (see \cite[27.B]{Kechris}) that the set
\[S=\big\{C \in \mathcal{K}(2^\omega): \forall\;x\in C\;\forall^\infty\;n\in\om\;x(n)=0\big\}\]
is $\Ubf{\Pi}^1_1$-complete where $\mc{K}(2^\om)$ stands for the family of compact subsets of $2^\om$ equipped with the Hausdorff metric, i.e. with the Vietoris topology, we know that $\mc{K}(2^\om)$ is a compact Polish space.

To finish the proof, we will define a Borel map $\mc{K}(2^\om)\to \mc{P}(\om\times\om)$, $C \mapsto A_C$ such that  $C \in S$ iff $A_C \in \mathcal{I}_0$. Fix an enumeration $\{s_m:m \in \omega\}$ of $2^{<\omega}$, for every $s\in 2^{<\om}$ define $[s]=\{x\in 2^\om:s\subseteq x\}$ (a basic clopen subset of $2^\om$), and let \[ A_C=\big\{(m,n):|s_m|>n,\;s_m(n)=1,\;\text{and}\;[s_m] \cap C \ne \emptyset\big\}.\]

For $C \in S$ we show that $A_C \in \mc{I}_0$. Let $X,Y \in [\omega]^\omega$ be arbitrary. If the set $\{m \in X:[s_m] \cap C =  \emptyset\}$ is infinite then we are done, since
\[A_C \cap \big(\big\{m \in X:[s_m] \cap C =  \emptyset\big\} \times Y\big)=\emptyset.\]
Otherwise, using the compactness of $C$ we can choose an $\{m_0<m_1<\dots\}=X' \in [X]^\omega$ and a convergent sequence $(x_{i})_{i\in\om}$ such that $x_i \in [s_{m_i}] \cap C$ for every $i$. If $x_i \to x $ then $x\in C\in S$ so $x(n)=0$ for every $n\geq n_0$ for some $n_0$. If $n \in Y\setminus n_0$ then for every large enough $i$ we have $n<|s_{m_i}|$ and $s_{m_i}(n)=x(n)=0$, hence the section $\{m:(m,n) \in (A_C \cap (X'\times Y))\}$ is finite. On the other hand, for a fixed $m$ if $|s_m|\leq n$ then $(m,n)\notin A_C$, therefore the section $\{n:(m,n) \in (A_C \cap (X'\times Y))\}$ is also finite. By an easy induction, one can define an $X''\in [X']^\om$ and a $Y''\in [Y]^\om$ such that $A_C\cap (X''\times Y'')=\0$.

Now we show that if $C  \not \in S$ then $A_C \not \in \mathcal{I}_0$. Let $x \in C$ be so that $Y=\{n:x(n)=1\}$ is infinite and let $X=\{m:x \in [s_m]\}$. Now clearly, if $(m,n) \in X \times Y$ then $(m,n) \in A_C$ if and only if $n<|s_m|$. In particular, for every $n \in Y$ the set $\{m \in X: (m,n) \not \in A_C\}$ is finite, and it clearly implies that the rectangle $X\times Y$ witnesses that $A_C\notin\mc{I}_0$.
\end{proof}

\begin{rem}
One can give an alternate proof of Theorem \ref{t:complete} constructing a Borel reduction of the set $C$ to $\mc{G}_\mathrm{c}$.
\end{rem}

\begin{thm}
There exist $\Ubf{\Sigma}^1_n$ and $\Ubf{\Pi}^1_n$-complete tall ideals for every $n \geq 1$.
\end{thm}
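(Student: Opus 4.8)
The plan is to construct, for every $n\ge 0$, a $\Ubf{\Pi}^1_{n+1}$-complete ideal and a $\Ubf{\Sigma}^1_{n+1}$-complete ideal — one generalizing the ideal of well-founded subtrees of $\om^{<\om}$, the other generalizing Zafrany's ideal — and then to make them tall by a uniform operation.

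First I would isolate the following tallification lemma. For any ideal $\mc{I}$ on $\om$, the Fubini product $\mc{I}\otimes\mathrm{Fin}=\{A\sub\om\times\om:\{k\in\om:|(A)_k|=\om\}\in\mc{I}\}$ is a proper \emph{tall} ideal on $\om\times\om$, the map $B\mapsto B\times\om$ is a continuous reduction of $\mc{I}$ to $\mc{I}\otimes\mathrm{Fin}$, and $\mc{I}\otimes\mathrm{Fin}$ lies in the same projective class as $\mc{I}$ (because $A\mapsto\{k:|(A)_k|=\om\}$ is Borel and $\Ubf{\Sigma}^1_m,\Ubf{\Pi}^1_m$ are closed under Borel preimages). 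Tallness is immediate: if all columns of an infinite $A$ are finite, a selector choosing one point from each of infinitely many nonempty columns is an infinite subset of $A$ in $\mc{I}\otimes\mathrm{Fin}$; if some column $(A)_k$ is infinite, then $\{k\}\times(A)_k$ is an infinite subset of $A$ in $\mc{I}\otimes\mathrm{Fin}$. Hence it suffices to exhibit, for each $n\ge 1$, a (not necessarily tall) $\Ubf{\Pi}^1_n$-complete ideal and a $\Ubf{\Sigma}^1_n$-complete ideal.

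Next, the $\Ubf{\Pi}^1_{n+1}$-complete ideal ($n\ge 0$). Let $R\sub\om^\om\times\om^\om$ be $\Ubf{\Sigma}^1_n$ and universal for $\Ubf{\Sigma}^1_n$ subsets of $\om^\om$ (for $n=0$, replace this by a universal Borel coding $e\mapsto T_e$ of trees and set $R=\{(e,w):w\notin[T_e]\}$), write $x=(x_{\mathrm e},x_{\mathrm o})$ for the splitting of $x\in\om^\om$ into its even- and odd-indexed subsequences, and put $G=\{x\in\om^\om:(x_{\mathrm e},x_{\mathrm o})\in R\}$, a $\Ubf{\Sigma}^1_n$-complete set. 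Letting $T_A=\{s\in\om^{<\om}:\exists\,t\in A\ s\sub t\}$ denote the downward closure of $A\sub\om^{<\om}$, I would define
\[\mc{J}_G=\bigl\{A\sub\om^{<\om}:[T_A]\sub G\bigr\}.\]
This is a proper ideal: it contains the finite sets (then $T_A$ is well-founded and $[T_A]=\0$), it is downward closed, it is closed under finite unions because $T_{A\cup B}=T_A\cup T_B$ and, since $T_A,T_B$ are downward closed, $[T_A\cup T_B]=[T_A]\cup[T_B]$, and it is proper because $[T_{\om^{<\om}}]=\om^\om\not\sub G$. Its complexity is $\Ubf{\Pi}^1_{n+1}$, since ``$x\in[T_A]$'' is $\Ubf{\Pi}^0_2$ in $(A,x)$ and so ``$[T_A]\sub G$'' reads $\forall x\,(x\notin[T_A]\vee x\in G)$, a universal real quantifier over a $\Ubf{\Sigma}^1_n$ matrix. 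For hardness, the continuous map sending $e\in\om^\om$ to the subtree $\mc{T}_e=\{s\in\om^{<\om}:s_{\mathrm e}$ is an initial segment of $e\}$ satisfies $[\mc{T}_e]=\{x:x_{\mathrm e}=e\}$ and $T_{\mc{T}_e}=\mc{T}_e$, whence $\mc{T}_e\in\mc{J}_G$ iff $\forall w\,(e,w)\in R$, i.e.\ iff $e$ belongs to the standard $\Ubf{\Pi}^1_{n+1}$-complete set $\{e':R_{e'}=\om^\om\}$. (For $n=0$, $\mc{J}_\0$ is exactly the ideal generated by the well-founded subtrees of $\om^{<\om}$, and $S\mapsto S$ reduces $\mathrm{WF}$ to it — an alternative to $\mc{G}_\mathrm{c}$.)

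Finally, the $\Ubf{\Sigma}^1_{n+1}$-complete ideal ($n\ge 0$), generalizing Zafrany's ideal. Fix a $\Ubf{\Pi}^1_n$-complete set $P\sub\om^\om$ (for $n=0$, $P=\om^\om$ gives Zafrany's ideal), put $I_x=\{s\in\om^{<\om}:x\upharpoonright|s|\nleq s\}$ for $x\in\om^\om$ (with $\le$ the coordinatewise order), and let $\mc{K}_P=\mathrm{id}(\{I_x:x\in P\})$. This is a proper ideal — as in Zafrany's case, no finitely many $I_{x_1},\dots,I_{x_k}$ cover $\om^{<\om}$, since the length-one sequence $(\max_i x_i(0)+1)$ lies outside all of them — and its complexity is $\Ubf{\Sigma}^1_{n+1}$, because $A\in\mc{K}_P$ iff $\exists k\,\exists x_1,\dots,x_k\in P\ A\sub\bigcup_{i\le k}I_{x_i}$, an existential real quantifier over a $\Ubf{\Pi}^1_n$ matrix. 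For $\Ubf{\Sigma}^1_{n+1}$-hardness one would reduce a complete set of the form $\{y:\exists z\,(y,z)\in R'\}$, $R'$ a universal $\Ubf{\Pi}^1_n$ set, to $\mc{K}_P$ by adapting Zafrany's reduction: code a witness $z$, together with the $\Ubf{\Pi}^1_n$ predicate ``$z$ lies in the relevant set'', into the choice of the generators $x_i\in P$. This last step is where I expect the main obstacle: one must control the ``finitely many generators'' implicit in $\mathrm{id}(\cdot)$ and verify that the family $\{I_x:x\in P\}$ admits no minimal covers, so that the outermost existential real quantifier genuinely survives (this is exactly what makes Zafrany's $n=0$ argument work). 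Granting it, $\mc{K}_P$ is $\Ubf{\Sigma}^1_{n+1}$-complete. Applying the tallification lemma to $\mc{J}_G$ and to $\mc{K}_P$ for all $n\ge 0$ (and recalling that $\mc{G}_\mathrm{c}$ is already a tall $\Ubf{\Pi}^1_1$-complete ideal) then yields $\Ubf{\Sigma}^1_n$- and $\Ubf{\Pi}^1_n$-complete tall ideals for every $n\ge 1$; the complexity computations and the tallification lemma are routine, so the only genuinely delicate point is the relativized Zafrany reduction.
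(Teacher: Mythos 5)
Your route is genuinely different from the paper's, and it has one real gap, which you yourself flag: the $\Ubf{\Sigma}^1_{n+1}$-hardness of $\mc{K}_P=\mrm{id}(\{I_x:x\in P\})$ is not established. The quantifier ``there exist finitely many generators from $P$'' does not obviously collapse to a single existential real quantifier tied to a witness; to run the reduction you would have to show that, for the sets $A_y$ you build, membership in $\mc{K}_P$ is decided by a single generator from $P$ whose membership in $P$ tracks the $\Ubf{\Pi}^1_n$ predicate, and you would also have to rule out spurious covers by finitely many other members of $\{I_x:x\in P\}$. This is not a routine relativization of Zafrany's $n=0$ argument, and until it is supplied the $\Ubf{\Sigma}^1_n$ half of the theorem is unproved. (The rest of your plan does check out: the Fubini tallification $\mc{I}\mapsto\mc{I}\otimes\mrm{Fin}$ is correct, and so is $\mc{J}_G=\{A\sub\om^{<\om}:[T_A]\sub G\}$, including the identities $T_{A\cup B}=T_A\cup T_B$ and $[T_A\cup T_B]=[T_A]\cup[T_B]$ and the reduction $e\mapsto\mc{T}_e$.)

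The paper's proof sidesteps exactly the ``finitely many generators'' difficulty and handles both projective classes uniformly. Fix a tall Borel ideal $\mc{J}$ and a perfect $\mc{J}$-AD family $\mc{A}$; given a $\Ubf{\Sigma}^1_n$- or $\Ubf{\Pi}^1_n$-complete subset $\mc{C}$ of the Polish space $\mc{A}$, set $\mc{I}=\mrm{id}(\mc{J}\cup\mc{C})$. Since $\mc{A}$ is $\mc{J}$-almost disjoint, an element of $\mc{A}$ that is covered mod $\mc{J}$ by finitely many generators must itself be one of them, so $\mc{I}\cap\mc{A}=\mc{C}$; any continuous reduction into $\mc{C}\sub\mc{A}$ is therefore already a reduction into $\mc{I}$, tallness comes for free from $\mc{J}\sub\mc{I}$, and the complexity upper bound is a short computation, directly in the $\Ubf{\Sigma}^1_n$ case and by writing $\mc{I}=\mrm{id}(\mc{J}\cup\mc{A})\setminus(\text{a }\Ubf{\Sigma}^1_n\text{ set})$ in the $\Ubf{\Pi}^1_n$ case. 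Your tallification lemma and your $\mc{J}_G$ construction are nice in their own right, but the paper's choice of generators from a perfect $\mc{J}$-AD family makes hardness a one-liner for both classes simultaneously, which is precisely what the Zafrany-style $\Ubf{\Sigma}^1_n$ construction is missing.
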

\begin{proof}
First we will construct $\Ubf{\Sigma}^1_n$-complete ideals. Let $\mc{J}$ be a tall Borel ideal, $\mc{A}$ be a perfect $\mc{J}$-AD family, and let $\mc{A}_n$ be a $\Ubf{\Sigma}^1_n$-complete subset of the Polish space $\mc{A}$. Define $\mc{I}_n=\mrm{id}(\mc{J}\cup\mc{A}_n)$, i.e. $\mc{I}_n$ is the ideal generated by $\mc{J}\cup\mc{A}_n$. Then $\mc{I}_n$ is a tall proper (because $\mc{A}_n$ is infinite) ideal. $\mc{I}_n$ is $\Ubf{\Sigma}^1_n$ because
\[ \mc{I}_n=\big\{X\subseteq\om:\exists\;k\in\om\;\exists\;(A_i)_{i<k}\in\mc{A}_n^k\;\;X\setminus \big(A_0\cup A_1\cup\dots\cup A_{k-1}\big)\in\mc{J}\big\}\]
In order to see that $\mc{I}_n$ is $\Ubf{\Sigma}^1_n$-complete, we know that if $B$ is a $\Ubf{\Sigma}^1_n$ set in a Polish space $\mc{X}$, then it can be reduced to $\mc{A}_n$ with a continuous map $f:\mc{X}\to \mc{A}\subseteq\mc{P}(\om)$, furthermore applying the trivial observation that $\mc{A}_n=\mc{I}_n \cap \mc{A}$, we obtain that this map is in fact  a reduction of $B$ to $\mc{I}_n$ as well.

Now we proceed with $\Ubf{\Pi}^1_n$ ideals. Again, there exists a $\Ubf{\Pi}^1_n$-complete set $\mc{B}_n \subseteq \mc{A}$. The previous argument gives that the ideal $\mc{I}'_n=\mrm{id}(\mc{J}\cup\mc{B}_n)$ is $\Ubf{\Pi}^1_n$-hard, so it is enough to prove that $\mc{I}'_n$ is $\Ubf{\Pi}^1_n$. In order to see this just notice that since $\mc{A}$ is an $\mc{J}$-AD-family, if $\mc{I}_0=\mrm{id}(\mc{J}\cup\mc{A})$ then we have
\[ X \in \mc{I}_0\setminus \mc{I}'_n\;\;\;\text{iff}\;\;\; X \in \mc{I}_0\;\text{and}\; \exists\;A\in \mc{A}\setminus \mc{B}_n\;A\cap X\in\mc{J}^+.\]
This implies, as $\mc{I}_0$ is clearly $\Ubf{\Sigma}^1_1$, that $\mc{I}_0 \setminus \mc{I}'_n$ is a $\Ubf{\Sigma}^1_n$ set, and hence $\mc{I}'_n$ is $\Ubf{\Pi}^1_n$ (here we used that $\mc{I}'_n \subseteq \mc{I}_0$).
\end{proof}
The idea of the above proof can be used to construct $\Ubf{\Sigma}^0_\alpha$-complete ideals for $\alpha \geq 3$ as well.

\section{Proof of Theorem \ref{main}}\label{mainsec}

\begin{proof}
Applying Corollary \ref{c-sAD}, we can fix perfect $\mc{I}$-AD families $\mc{A}_X$ on every $X\in\mc{I}^+$. The statement ``$\mc{A}_X$ is an $\mc{I}$-AD family'' is (at most) $\Ubf{\Pi}^1_2$ hence absolute because if $\mc{A}_X=[T]$ is coded by the perfect tree $T\in\mathrm{Tree}_2=\{T\subseteq 2^{<\om}:T$ is a tree$\}$ then ``$\mc{A}_X$ is an $\mc{I}$-AD family''$\equiv$
\[\forall\;x,y\in [T]\;\big(x\in\mc{I}^+\;\text{and}\;(x=y\;\text{or}\; x\cap y\in\mc{I})\big)\] where of course we are working on $2^\om$ and $(x\cap y)(n)=x(n)\cdot y(n)$ for every $n$.

For every $X,Y\in \mc{I}^+$ let $B(X,Y)=\{A\in \mc{A}_X:A\cap Y\in\mc{I}^+\}$. Then it is a continuous preimage of $\mc{I}^+$ (under $\mc{A}_X\to \mc{P}(\om)$, $A\mapsto A\cap Y$), hence if $\mc{I}$ is analytic then $B(X,Y)$ is coanalytic, and similarly, if $\mc{I}$ is coanalytic then $B(X,Y)$ is analytic.

Let $\ka=|\mf{c}^V|^W$ and fix an enumeration $\{X_\al:\al<\ka\}$ of the set $\mc{I}^+\cap V$ in $W$. Working in $W$, we will construct the desired $\mc{I}$-AD refinement $\{A_\al:\al<\ka\}$, $A_\al\subseteq X_\al$ by recursion on $\ka$. During this process, we will also define a sequence $(B_\al)_{\al<\ka}$ in $\mc{I}^+$.

Assume that $\{A_\xi:\xi<\al\}$ and $(B_\xi)_{\xi<\al}$ are done. Let $\ga_\al$ be minimal such that $B(X_{\ga_\al},X_\al)$ contains a perfect set. This property, namely, that an analytic or coanalytic set $H\subseteq\mc{P}(\om)$ contains a perfect set, is absolute because if it is analytic then ``$H$ contains a perfect subset'' iff ``$H$ is uncountable'' is of the form ``$\forall$ $f\in\mc{P}(\om)^\om$ $\exists$ $x$ $(x\in H$ and $x\notin\mathrm{ran}(f))$'' hence it is $\Ubf{\Pi}^1_2$; and if $H$ is coanalytic then ``$H$ contains a perfect set'' is of the form ``$\exists$ $T\in\mathrm{Tree}_2$ $(T$ is perfect and $\forall$ $x\in [T]$ $x\in H)$'' hence it is $\Ubf{\Sigma}^1_2$. In particular, $\ga_\al\leq\al$.
We also know that if $C$ is a perfect set coded in $V$, then in $W$ it contains $\ka$ many new elements: We know it holds for $2^\om$ e.g. because of the group structure on it, and we can compute new elements of $C$ along a homeomorphism between $C$ and $2^\om$ fixed in $V$.
Let
\[ B_\al\in B(X_{\ga_\al},X_\al)\setminus \big(V\cup \{B_\xi:\xi<\al\}\big)\;\;\text{be arbitrary,}\] and finally, let $A_\al=X_\al\cap B_\al\in\mc{I}^+$. We claim that $\{A_\al:\al<\ka\}$ is an $\mc{I}$-AD family (it is clearly a refinement of $\mc{I}^+\cap V$). Let $\al,\be<\ka$, $\al\ne\be$.

If $\ga_\al=\ga_\be=\ga$ then $B_\al,B_\be\in\mc{A}_{X_\ga}$ are distinct, and hence $A_\al\cap A_\be\subseteq B_\al\cap B_\be\in\mc{I}$ (actually, we can assume that it is finite).

If $\ga_\al<\ga_\be$, then because of the minimality of $\ga_\be$, we know that $B(X_{\ga_\al},X_\be)$ does not contain perfect subsets. It is enough to see that $B(X_{\ga_\al},X_\be)$ is the same set in $V$ and $W$, i.e. if $\psi(x,r)$ is a $\Sigma^1_1(r)$ or $\Pi^1_1(r)$ definition of this set then $\forall$ $x\in W$ $(\psi(x,r)\rightarrow x\in V$). Why? Because then $B_\al\notin B(X_{\ga_\al},X_\be)$ but $B_\al\in\mc{A}_{X_{\ga_\al}}$, hence it yields that $A_\al\cap A_\be\subseteq B_\al\cap X_\be\in\mc{I}$.

The set $K:=B(X_{\ga_\al},X_\be)$ is analytic or coanalytic and does not contain perfect subsets (neither in $V$ nor in $W$). Applying the Mansfield-Solovay theorem, we know that $K\subseteq L[r]$ ($r\in V$). We also know that $(L[r])^V\cap\mc{P}(\om)=(L[r])^W\cap \mc{P}(\om)$ holds because $\om_1^W\subseteq V$, hence $K^V=K^W$.
\end{proof}

\begin{rem}
It is natural to ask the following: Assume that $V\subseteq W$ are transitive models, $W$ contains new reals, and let $C$ be a perfect set coded in $W$. Does $C$ contain at least $|\mf{c}^V|^W$ many new elements in $W$? In other words: Does $|C^W\setminus V|^W\geq |\mf{c}^V|^W$ hold? Surprisingly, the answer is no! Moreover, it is possible that there is a perfect set of groundmodel reals in the extension, see \cite{VeWo}.
\end{rem}

\begin{rem}
What can we say about possible generalizations of Theorem \ref{main}, for example, can we weaken the condition on the complexity of the ideal? In general, this statement is false. Let $\varphi(x)$ be a $\Sigma^1_2$ definition of a $\Sigma^1_2$ (i.e. $\Delta^1_2$) prime P-ideal $\mc{I}$ in $L$. (How to construct such an ideal? Using a $\Delta^1_2$-good well-order $\leq$ on $\mc{P}(\om)$, by the most natural recursion, at every stage extending our family with a $\leq$-minimal element which can be added without generating $\mc{P}(\om)$ and also with a $\leq$-minimal pseudounion of the previous elements, avoiding universal quantification by applying goodness, we obtain such an ideal.) We cannot expect that $\varphi(x)$ defines an ideal in general but we can talk about the {\em generated} ideal: $x\in \mc{J}$ iff ``$\exists$ $y\in\mc{I}$ $x\subseteq y$'' which is $\Sigma^1_2$ too. If $r$ is a Sacks real over $L$, then $\mc{J}$ is still a prime P-ideal in $L[r]$ (see \cite[Lemma 7.3.48]{BaJu}) hence $\mc{J}^+\cap L$ does not have any $\mc{J}$-ADR's in $L[r]$.
\end{rem}

\section{On the existence of perfect $(\mc{I},\mrm{Fin})$-AD families}\label{everywheremeager}

First of all, we show that the reverse implication in the first part of Corollary \ref{c-sAD} does not hold.

\begin{exa}
The assumption that there is a perfect $(\mc{I},\mrm{Fin})$-AD family does not imply that $\mc{I}$ is meager: Fix a prime ideal $\mc{J}$ on $\om$. For every partition $P=(P_n)_{n\in\om}$ of $\om$ into finite sets, fix an $X_P\in [\om]^\om$ such that $A_P=\bigcup\{P_n:n\in X_P\}\in \mc{J}$ (notice that $\mc{J}$ cannot be meager); and let the ideal $\mc{I}$ on $2^{<\om}$ be generated by the sets of the form $A'_P=\bigcup\{2^k:k\in A_P\}$.

Clearly, the family $\{\{f\upharpoonright n:n\in\om\}:f\in 2^\om\}$ of branches of $2^{<\om}$ is a perfect AD family. We show that $\{f\upharpoonright n:n\in\om\}\in\mc{I}^+$. Notice that $\{\dom(s):s \in A'_P\}=A_P \in \mc{J}$ for every $P$. Thus, a set of the form $B_f=\{f\upharpoonright n:n\in\om\}$ cannot be an element of the ideal because $\{\dom(s):s\in B_f\}=\om$.

$\mc{I}$ is not meager: Assume the contrary, then by Theorem \ref{t-char} there exists a partition $Q=(Q_n)_{n\in\om}$ of $2^{<\om}$ into finite sets such that $\{n\in\om:Q_n\subseteq A\}$ is finite for every $A\in\mc{I}$. Then there is a partition $P=(P_n)_{n\in\om}$ of $\om$ into finite sets such that for every $n$ there is an $m$ with $Q_m\subseteq \bigcup\{2^k:k\in P_n\}$. We know that $A'_P\in\mc{I}$, a contradiction because $A'_P$ contains infinitely many $Q_m$'s.
\end{exa}

What can we say if there are perfect $(\mc{I},\mrm{Fin})$-AD families on every $X\in\mc{I}^+$? In this case we have only consistent counterexamples.

\begin{thm}
Assume that $\mf{b}=\mf{c}$. Then there is a non-meager ideal $\mc{I}$ on $\om$ such that there are perfect $(\mc{I},\mrm{Fin})$-AD families on every $X\in\mc{I}^+$.
\end{thm}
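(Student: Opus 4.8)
The plan is to build the ideal $\mc{I}$ as a union of a continuous increasing chain of ``small'' ideals $\mc{I}_\al$, $\al<\mf{c}$, where at each step I make one more commitment ensuring, on the one hand, that $\mc{I}$ will not be meager, and on the other hand, that every $\mc{I}$-positive set carries a perfect $(\mc{I},\mrm{Fin})$-AD family. Enumerate in a list of length $\mf{c}$ all partitions $(P_n)_{n\in\om}$ of $\om$ into finite sets (there are $\mf{c}$ of them), interleaved with all pairs $(X,\mc{A})$ where $X\in[\om]^\om$ and $\mc{A}$ is a perfect AD family on $X$ -- or rather, all perfect trees $T\subseteq 2^{<\om}$ coding such families together with a target set $X$. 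To kill meagerness, at the step handling a partition $(P_n)_n$ I want to throw a set of the form $\bigcup\{P_n:n\in W\}$ (for a suitable infinite $W$) into the ideal, which by Theorem~\ref{t-char} is exactly what prevents $(P_n)_n$ from being a witness to meagerness; since by the characterisation theorem an ideal is meager iff \emph{some} such partition works, diagonalising against all partitions makes $\mc{I}$ non-meager.

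The central difficulty is the tension between these two demands: adding sets $\bigcup\{P_n:n\in W\}$ to make $\mc{I}$ non-meager shrinks $\mc{I}^+$, and I must be careful that the sets I have promised to be $\mc{I}$-positive (the unions of branches of the perfect trees I am handling) genuinely stay positive, i.e.\ are not covered by finitely many of the partition-sets accumulated so far, together with the other generators. This is exactly where $\mf{b}=\mf{c}$ enters. At stage $\al$ I have an ideal $\mc{I}_\al$ generated by $<\mf{c}=\mf{b}$ many sets; when I must process a partition $(P_n)_n$, I need to choose the infinite set $W$ so that $A_W:=\bigcup\{P_n:n\in W\}$ is \emph{almost disjoint} (mod finite) from enough already-promised positive sets, or more precisely so that adding $A_W$ keeps all the relevant branch-unions positive. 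The point is that each previously promised positive set $B$, when viewed through the partition $(P_n)_n$, gives a function (roughly: $n\mapsto$ how far into $P_n$ the set $B$ reaches, or the trace set $\{n:P_n\cap B\neq\0\}$); having $<\mf{b}$ many such constraints I can find, by a bounding/unbounded-domain argument, a single infinite $W$ whose complement still meets each of these traces infinitely often, so that removing $A_W$ from each $B$ leaves an infinite -- indeed still $\mc{I}_\al$-positive -- remainder. I would phrase this as: the collection of ``bad'' $W$'s for each constraint is contained in a $\le^*$-cone (or a set avoidable by an unbounded real), and $<\mf{b}$ many cones do not cover, so a good $W$ exists.

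Concretely the recursion would maintain the invariant that $\mc{I}_\al$ is generated by $|\al|+\om$ many sets, that $\mc{I}_\al\subseteq\mc{I}_\be$ for $\al<\be$, and -- the key bookkeeping invariant -- that for every perfect-AD-family-coding tree $T$ with target $X$ handled at a stage $\ga\le\al$, the perfect family $\{X\cap A'_b: b\in[T]\}$ (where $A'_b$ is the union over $b$ of the relevant finite blocks, as in Corollary~\ref{c-sAD}) consists of $\mc{I}_\al$-positive sets and is $(\mc{I}_\al,\mrm{Fin})$-AD. When at stage $\al$ I am given a set $X$ that has turned out to be $\mc{I}_\al$-positive and is required to carry a perfect $(\mc{I},\mrm{Fin})$-AD family, I first fix (in $V$, so that the tree is a genuine perfect tree) a perfect almost disjoint family on a copy of $\om$ inside $X$, transport it to $X$, and then -- using $\mf{b}=\mf{c}$ again -- verify that only $<\mf{b}$ constraints need to be met to keep all its members positive against the generators added so far, and promise to keep them positive forever by restricting future partition-choices. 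At limits take unions. Finally, after the recursion, $\mc{I}=\bigcup_{\al<\mf{c}}\mc{I}_\al$; it is proper because every generator added was chosen so as not to generate all of $\mc{P}(\om)$ (each $A_W$ has infinite complement meeting all relevant traces), it is non-meager because every partition was defeated, and every $X\in\mc{I}^+$ was enumerated at some stage $\al$ with $X\in\mc{I}_\al^+$, so a perfect $(\mc{I},\mrm{Fin})$-AD family was installed on it. The main obstacle, as indicated, is making the partition-defeating generators coexist with all the promised perfect families -- i.e.\ the $\mf{b}$-sized counting of constraints and the choice of a simultaneously good $W$ -- and that is precisely what I expect to require the most care.
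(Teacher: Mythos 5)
Your high-level strategy is the same as the paper's: build $\mc{I}=\bigcup_{\al<\mf{c}}\mc{I}_\al$ as an increasing union of ideals each generated by $<\mf{c}=\mf{b}$ sets; at each stage defeat one interval-partition to kill meagerness; install perfect $(\mc{I}_\al,\mrm{Fin})$-AD families on $\mc{I}_\al$-positive sets (using that an ideal generated by $<\mf{b}$ sets is everywhere meager, so Corollary \ref{c-sAD} applies); and carry along the invariant that all previously installed families remain $(\mrm{Fin})$-AD and positive. That much is correct and matches the paper.

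The gap is in your constraint count. You write that ``only $<\mf{b}$ constraints need to be met to keep all its members positive against the generators added so far,'' and you propose to view each promised positive set $B$ through the current partition as one $\le^*$-constraint. But the sets you must keep positive are the individual members $A$ of the perfect families $\mc{A}_\be$, and each such family has $\mf{c}$ members. Naively you therefore have $\mf{c}\cdot|\al|$ constraints (one per pair $(A,B)$ with $A\in\mc{A}_\be$, $B$ a generator), which is too many for a bounding argument, even under $\mf{b}=\mf{c}$. The missing idea is a compactness step: for a fixed $\be\in D'_\al$ and a fixed generator $B$, use the fact that $\mc{A}_\be$ is a compact subset of $\mc{P}(\om)$ to produce a \emph{single} interval partition $Q_{\be,B}=(Q^{\be,B}_n)_n$ such that $A\cap Q^{\be,B}_n\setminus B\neq\0$ for \emph{every} $A\in\mc{A}_\be$ and every $n$. (This is the Claim in the paper's proof: cover $\mc{A}_\be$ by basic open sets of the form $\{A:n\in A\}$ with $n\notin B$, extract a finite subcover, and iterate to build the blocks $Q^{\be,B}_n$.) This collapses the $\mf{c}$-many $A$-constraints coming from a single family into one partition per pair $(\be,B)$, of which there are $\le|\al|\cdot|\al|<\mf{b}$. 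Only then does the $\mf{b}$-argument go through: dominate $\{P_\al,Q\}\cup\{Q_{\be,B}\}$ by a single partition $R$, put $Y=\bigcup_n R_{2n}$ into the ideal (this defeats $P_\al$), and observe that each $A\in\mc{A}_\be$ still meets infinitely many odd $R$-blocks off $B$ because each such block absorbs some $Q^{\be,B}_{n}$. Without the compactness reduction the proposal does not close.

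A smaller omission: you should also handle the dichotomy at stage $\al$ of whether $X_\al$ can be thrown into the ideal without ruining a previously installed family, versus whether it must itself receive a perfect family; the paper makes this case split explicit, and it matters because $X_\al$ may already be $\mc{I}_{<\al}$-positive and yet consistent with being put into the ideal.
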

\begin{proof}
Let $[\om]^\om=\{X_\al:\al<\mf{c}\}$ and $\{$partitions of $\om$ into finite sets$\}=\{P_\al=(P^\al_n)_{n\in\om}:\al<\mf{c}\}$ be enumerations. We will construct the desired ideal $\mc{I}$ as an increasing union $\bigcup\{\mc{I}_\al:\al<\mf{c}\}$ of ideals by recursion on $\al<\mf{c}$. At the $\al$th stage we will make sure that
\begin{itemize}
\item[(i)] $\mc{I}_\al$ is generated by $|\al|$ many elements;
\item[(ii)] $P_\al$ cannot witness that $\mc{I}_\al$ is meager;
\item[(iii)] either $X_\al$ belongs to $\mc{I}_\al$ or there is a perfect $(\mc{I}_\al,\mrm{Fin})$-AD family on $X_\al$;
\item[(iv)] we do not destroy the $(\mc{I}_\be,\mrm{Fin})$-AD families we may have constructed in previous stages.
\end{itemize}

Let $\mc{I}_0=\mrm{Fin}$ and fix a perfect AD family $\mc{A}_0$ on $X_0$. At stage $\al>0$ we already have the ideals $\mc{I}_\be$ for every $\be<\al$, let $\mc{I}_{<\al}=\bigcup\{\mc{I}_\be:\be<\al\}$. We also have perfect $(\mc{I}_{<\al},\mrm{Fin})$-AD families $\mc{A}_\be$ on $X_\be\in\mc{I}_{<\al}^+$ for certain $\be\in D_\al\subseteq\al$.

If we can add $X_\al$ to $\mc{I}_{<\al}$, that is, $\mc{A}_\be\cap\mrm{id}(\mc{I}_{<\al}\cup\{X_\al\})=\0$ for every $\be\in D_\al$, then let $\mc{I}'_\al=\mrm{id}(\mc{I}_{<\al}\cup\{X_\al\})$ and $D'_\al=D_\al$.

Suppose that we cannot add $X_\al$ to $\mc{I}_{<\al}$, that is, $\mc{A}_\be\cap \mrm{id}(\mc{I}_{<\al}\cup\{X_\al\})\ne\0$ for some $\be\in D_\al$. Since
$\mc{I}_{<\al}$ is generated by $<\mf{b}=\mf{c}$ many sets, it is an everywhere meager ideal (see \cite{Solomon} or \cite[Thm. 9.10]{Blass}). We can apply Corollary \ref{c-sAD} to obtain a perfect $(\mc{I}_{<\al},\mrm{Fin})$-AD family $\mc{A}_\al$ on $X_\al$, let $\mc{I}'_\al=\mc{I}_{<\al}$, and let $D'_\al=D_\al\cup\{\al\}$.


Fix a partition $Q=(Q_n)_{n\in\om}$ of $\om$ into finite sets such that $\{n\in\om:Q_n\subseteq A\}$ is finite for every $A\in\mc{I}'_\al$ (we know that $\mc{I}'_\al$ is meager).

\begin{claim} There exist partitions $Q_{\be,B}=(Q^{\be,B}_n)_{n\in\om}$ for every $\be\in D'_\al$ and $B\in \mc{I}'_\al$ such that $A\cap Q^{\be,B}_n\setminus B\ne\0$ for every $\be\in D'_\al$, $A\in\mc{A}_\be$, $B\in\mc{I}'_\al$, and $n\in\om$.
\end{claim}
\begin{proof}[Proof of the Claim.] Let $\be\in D'_\al$ and $B\in\mc{I}'_\al$. We know that $\mc{A}_\be$ is compact as a subset of $\mc{P}(\om)$. Basic open sets in $\mc{P}(\om)$ are of the form $[s,t]=\{A\subseteq \om:s\cap A=\0$ and $t\subseteq A\}$ for disjoint, finite $s,t\subseteq\om$. Then $\mc{A}_\be\subseteq\bigcup\{[\0,\{n\}]:n\in\om\setminus B\}$ because $A\setminus B$ is infinite for every $A\in\mc{A}_\be$. Therefore $\mc{A}_\be\subseteq \bigcup\{[\0,\{n\}]:n\in N_0\setminus B\}$ for an $N_0\in\om$, in particular, $A\cap N_0\setminus B\ne\0$ for every $A\in\mc{A}_\be$. Let $Q^{\be,B}_0=[0,N_0)$. We can proceed by the same argument: $\mc{A}_\be\subseteq\bigcup\{[\0,\{n\}]:n\in [N_0,\om)\setminus B\}$ hence there is an $N_1>N_0$ such that $\mc{A}_\be\subseteq\bigcup\{[\0,\{n\}]:n\in [N_0,N_1)\setminus B\}$, in other words, $A\cap [N_0,N_1)\setminus B\ne\0$ for every $A\in\mc{A}_\be$. Let $Q^{\be,B}_1=[N_0,N_1)$ etc.
\end{proof}

Now we have the family $\mc{Q}=\{P_\al\}\cup\{Q\}\cup\{Q_{\be,B}:\be\in D'_\al,B \in \mc{C}_{\alpha}\}$ of partitions where $\mc{C}_\al\subseteq\mc{I}'_\al$ is a cofinal family, $|\mc{C}_\al|\leq\max\{|\al|,\om\}$. $|\mc{Q}|<\mf{c}=\mf{b}$ hence there is a partition $R=(R_m)_{m\in\om}$ which dominates all of these partitions, that is, $\forall$ $P=(P_n)_{n\in\om}\in\mc{Q}$ $\forall^\infty$ $m$ $\exists$ $n$ $P_n\subseteq R_m$ (see \cite[Thm. 2.10]{Blass}). Let $Y=\bigcup\{R_{2n}:n\in\om\}$ and $\mc{I}_{\al}=\mrm{id}(\mc{I}'_\al\cup\{Y\})$.

Then (i) is clearly satisfied, in order to see (ii) notice that by the fact that the partition $R_m$ was dominating and $P_\alpha \in \mc{Q}$, for almost every $m$ there exists an $n$ with $P^\alpha_n \subset R_{2m}$. Condition (iii) is also clear if $X_\alpha \in\mc{I}'_\al$.

If $X_\alpha \not \in \mc{I}'_\al$ then by definition $\alpha \in D'_\alpha$ so to see (iii) and (iv) we have to show that for every $\beta \in D'_\alpha$ the family $\mc{A}_\beta$ is not just an $(\mc{I}'_\al,\mrm{Fin})$-AD family, but also an $(\mc{I}_\al,\mrm{Fin})$-AD family. In other words, it is enough to check that for every $A \in \mc{A}_\beta$ and $B \in \mc{I}'_\al$ we have $A \setminus (B \cup Y) \ne\emptyset$. Fix such $A$ and $B$, we can assume that $B\in\mc{C}_\al$. Then for almost every $m$, there is an $n_m$ such that $Q^{\be,B}_{n_m}\subseteq R_{2m+1}$, and by the claim we know that $A\cap Q^{\be,B}_{n_m}\setminus B\ne\0$. Therefore, $A\setminus (B\cup Y)$ is infinite, hence $\mc{A}_\be\cap\mc{I}_\al=\0$ for every $\be\in D'_\al$.
\end{proof}

What can we say about ideals on the second level of the projective hierarchy, do there always exist perfect or at least uncountable $(\mc{I},\mrm{Fin})$-AD families? If all $\Ubf{\Sigma}^1_2$ and $\Ubf{\Pi}^1_2$ sets have the Baire property, then of course, yes because then $\Ubf{\Sigma}^1_2$ and $\Ubf{\Pi}^1_2$ ideals are meager and we can apply  Corollary \ref{c-sAD}. On the other hand, if $\mc{I}$ is a $\Sigma^1_2$ (i.e. $\Delta^1_2$) prime ideal (e.g. in $L$) then every $\mc{I}$-AD family is a singleton.

Similarly, we can construct a $\Sigma^1_2$-ideal $\mc{J}$ in $L$ such that there are infinite $\mc{J}$-AD families but all of them are countable: Copy the above ideal $\mc{I}$ to the elements of a partition $\{P_n:n\in\om\}\subseteq [\om]^\om$ of $\om$, and let $\mc{J}$ be the generated ideal.

This last example is very artificial in the sense that, this ideal is constructed from maximal ideals in a very ``obvious'' way, many of its restrictions are prime ideals. However, we can construct even more peculiar ideals:
\begin{prop} \label{strangeideal}
Suppose that there exists a $\Ubf{\Delta}^1_n$ prime ideal on $\omega$ for some $n$. Then there exists a $\Ubf{\Delta}^1_n$ ideal $\mc{I}$ such that $\mc{I}$ is nowhere maximal but every $\mc{I}$-AD family is countable. In particular, there exists such a $\Ubf{\Delta}^1_2$ ideal in $L$.
\end{prop}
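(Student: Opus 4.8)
The plan is to take a $\Ubf{\Delta}^1_n$ prime ideal $\mc{P}$ on $\om$ (which exists by hypothesis) and use it as a ``building block'', but arranged so cleverly that no single restriction $\mc{I}\!\upharpoonright\! X$ is itself prime, while still forbidding uncountable AD families. The naive construction — copying $\mc{P}$ onto each piece of a fixed partition $\{P_n:n\in\om\}$ and taking the generated ideal — fails condition ``nowhere maximal'' badly, since restricting to a single $P_n$ gives a prime ideal. The fix I would use is to index the copies not by a fixed partition but by a perfect AD (or better, a perfect $(\mrm{Fin},\mrm{Fin})$-AD) family $\mc{A}=\{A_t:t\in 2^\om\}$ on $\om$, obtained from the branches of $2^{<\om}$ as in the proof of Corollary \ref{c-sAD}. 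On each $A_t$ fix (uniformly, using a $\Ubf{\Delta}^1_n$-good wellorder or simply a recursive copying scheme along a fixed homeomorphism $A_t\cong\om$) a copy $\mc{P}_t$ of the prime ideal $\mc{P}$. Then set
\[
\mc{I}=\big\{X\subseteq\om:\exists\,k\ \exists\,t_0,\dots,t_{k-1}\in 2^\om\ \big(X\setminus(A_{t_0}\cup\dots\cup A_{t_{k-1}})\in\mrm{Fin}\ \text{and}\ X\cap A_{t_i}\in\mc{P}_{t_i}\ \forall i<k\big)\big\},
\]
i.e. the ideal generated by $\mrm{Fin}$ together with all the $\mc{P}_t$'s. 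The complexity bookkeeping is exactly as in the proof that $\mc{I}_n=\mrm{id}(\mc{J}\cup\mc{A}_n)$ is $\Ubf{\Sigma}^1_n$ in the theorem above: the displayed formula is an existential quantifier over a natural number and finitely many reals in front of a $\Ubf{\Delta}^1_n$ matrix, so $\mc{I}$ is $\Ubf{\Sigma}^1_n$; for the $\Ubf{\Pi}^1_n$ (hence $\Ubf{\Delta}^1_n$) direction one writes the complement using that distinct $A_t,A_s$ meet finitely and the $\mc{P}_t$'s are prime, so ``$X\notin\mc{I}$'' says there are infinitely many $t$ with $X\cap A_t\notin\mc{P}_t$ (equivalently $X\cap A_t\in\mc{P}_t^*$), which is again $\Ubf{\Sigma}^1_n$ over a $\Ubf{\Delta}^1_n$ matrix.

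Next I would verify the two combinatorial properties. \emph{Every $\mc{I}$-AD family is countable:} suppose $\mc{B}\subseteq\mc{I}^+$ is $\mc{I}$-AD. For each $B\in\mc{B}$ the set $S(B)=\{t\in 2^\om:B\cap A_t\notin\mc{P}_t\}$ is infinite (since $B\notin\mc{I}$). If $B\ne B'$ are in $\mc{B}$ then $B\cap B'\in\mc{I}$, so $B\cap B'\cap A_t\in\mc{P}_t$ for all but finitely many $t$; combined with $B\cap A_t,\ B'\cap A_t$ both outside the \emph{prime} ideal $\mc{P}_t$ (for $t\in S(B)\cap S(B')$), and primeness of $\mc{P}_t$, one gets that $S(B)\cap S(B')$ is finite. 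Thus $\{S(B):B\in\mc{B}\}$ is an almost disjoint family of infinite subsets of the countable set $2^\om$'s... — wait, $2^\om$ is uncountable, so I should instead note each $S(B)$ contains an infinite \emph{countable} subset and pass to an AD family indexed inside a fixed countable subset of $2^\om$; more cleanly, re-index the copies by a countable set. This is the point that needs care and is the \textbf{main obstacle}: I must choose the index set for the copies $\mc{P}_t$ to be \emph{countable} (so that the $S(B)$'s live in a countable set and an uncountable AD family among them is impossible), yet \emph{rich enough inside $\mc{I}^+$} that no restriction of $\mc{I}$ becomes prime. The resolution: index by a countable perfect-in-itself... no — index by a \emph{countable dense} set; better, use a countable $(\mrm{Fin},\mrm{Fin})$-AD family $\{A_n:n\in\om\}$ that is moreover \emph{nowhere maximal as a partition skeleton}, and arrange that for each $X\in\mc{I}^+$ there are infinitely many $n$ with $X\cap A_n\notin\mc{P}_n$ AND $X\cap A_n$ splits into two $\mc{P}_n$-positive pieces not separated by $\mc{I}$. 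Concretely: take $\{A_n:n\in\om\}$ to be a partition of $\om$ into infinite sets, on each put the copy $\mc{P}_n$; then $S(B)\subseteq\om$ is infinite and the $S(B)$'s are almost disjoint, forcing $|\mc{B}|\le\mf{c}$ — not good enough, I need $\le\aleph_0$. So the index set genuinely must be countable AND the argument must upgrade ``AD family of subsets of $\om$'' to ``finite or countable''; that only works if additionally the $S(B)$'s are \emph{linearly (mod finite) ordered} or pairwise \emph{almost equal}, which holds if $\mc{I}$ is a P-ideal-like ``thin'' ideal. Hence I would make $\mc{P}$ a prime \emph{P}-ideal (as produced in the Remark after Theorem \ref{main}'s proof) and the copies arranged so that $\mc{I}$ is ``essentially principal on a P-filter'', forcing all $S(B)$ almost equal, hence $\mc{B}$ countable.

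\emph{$\mc{I}$ is nowhere maximal:} given $X\in\mc{I}^+$, pick $n$ with $X\cap A_n\notin\mc{P}_n$; since $\mc{P}_n$ is prime, $X\cap A_n\in\mc{P}_n^*$, but $X$ also meets infinitely many other $A_m$ in a $\mc{P}_m$-positive way (again because $X\notin\mc{I}$ and $\mc{I}$ is generated over $\mrm{Fin}$ by the $\mc{P}_m$'s), so I can split $X$ as $X=(X\cap A_n)\sqcup(X\setminus A_n)$ into two $\mc{I}$-positive pieces, witnessing that $\mc{I}\!\upharpoonright\! X$ is not prime. This also exhibits infinite $\mc{I}$-AD families, as claimed. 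Finally, the ``in particular'' clause: in $L$ there is a $\Ubf{\Delta}^1_2$ prime (P-)ideal — construct it by recursion along a $\Delta^1_2$-good wellorder of $\mc{P}(\om)$ exactly as sketched in the Remark following the proof of Theorem \ref{main} — so the construction yields a $\Ubf{\Delta}^1_2$ example in $L$. I expect the delicate part of the write-up to be pinning down the exact combinatorial requirement on the block ideal $\mc{P}$ and the index set so that countability of $\mc{I}$-AD families genuinely follows; everything else is routine complexity-counting in the style already used for the $\Ubf{\Sigma}^1_n$-complete ideals above.
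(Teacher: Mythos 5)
Your approach has a fatal gap, and it lies earlier than the place you flagged. You assert that $B\in\mc{I}^+$ implies $S(B)=\{t:B\cap A_t\notin\mc{P}_t\}$ is infinite. This is false for the ideal $\mc{I}$ generated by $\mrm{Fin}$ together with the copies $\mc{P}_t$: if $\{A_n:n\in\om\}$ is a partition and $B$ selects one point $a_n\in A_n$ for each $n\in D$ with $D\in[\om]^\om$, then $B\cap A_n$ is finite for every $n$, so $S(B)=\emptyset$, yet $B\notin\mc{I}$ since no finite union $A_{t_0}\cup\cdots\cup A_{t_{k-1}}$ covers $B$ modulo $\mrm{Fin}$. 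Worse, your construction genuinely admits uncountable $\mc{I}$-AD families: take any uncountable AD family $\mc{D}\subseteq[\om]^\om$ and put $B_D=\{a_n:n\in D\}$. Then each $B_D\in\mc{I}^+$, and for $D\ne D'$ we have $B_D\cap B_{D'}\subseteq\{a_n:n\in D\cap D'\}$, which is finite, hence in $\mc{I}$. No choice of index set or additional property of $\mc{P}$ (P-ideal, etc.) fixes this, because the culprit is structural: an ideal generated by ``block'' ideals and $\mrm{Fin}$ always has ``thin selectors'' as positive sets, and these support uncountable AD families just as in the classical $\mrm{Fin}$ case.

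The paper's proof runs along entirely different lines and sidesteps all of these issues. Take the ultrafilter $\mc{U}$ dual to the given prime ideal and define $\mu:\mc{P}(\om)\to[0,1]$ by $\mu(A)=\lim_\mc{U}|A\cap n|/n$, a finitely additive non-atomic probability measure on $\om$; then set $\mc{I}=\{A\subseteq\om:\mu(A)=0\}$. Nowhere maximality is immediate from non-atomicity (any positive set splits into two positive pieces). Countability of $\mc{I}$-AD families is a pigeonhole argument: if $\mc{A}$ were uncountable, then some $\mc{A}_n=\{A\in\mc{A}:\mu(A)>1/n\}$ would be uncountable, but already $n$ elements of $\mc{A}_n$ with pairwise null intersections would force $\mu(\om)>1$. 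The complexity claim follows since $A\in\mc{I}$ iff $\forall k\ \{m:|A\cap m|/m<2^{-k}\}\in\mc{U}$, and the map $A\mapsto\{m:|A\cap m|/m<2^{-k}\}$ is continuous. The idea you were missing is to pass from the prime ideal to the null ideal of a $\mc{U}$-limit density measure, which inherits both desired combinatorial properties at once.
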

\begin{proof}
Let $\mc{U}$ be an ultrafilter and define $\mu:\mc{P}(\om)\to [0,1]$ as $\mu(A)=\lim_\mc{U}\frac{|A\cap n|}{n}$ where $\lim_\mc{U}$ stands for the {\em $\mc{U}$-limit} operation on sequences in topological spaces, that is, $\lim_\mc{U}(a_n)=a$ iff $\{n\in\om:a_n\in V\}\in\mc{U}$ for every neighbourhood $V$ of $a$. It is easy to see that if $\overline{\{a_n:n\in\om\}}$ is compact, then $\lim_\mc{U}(a_n)_{n\in\om}$ exists, in particular, $\mu$ is defined on every $A\in\mc{P}(\om)$. It is also straightforward to show that $\mu$ is a finitely additive non-atomic probability measure on $\mc{P}(\om)$, that is, $\mu(\0)=0$, $\mu(A\cup B)=\mu(A)+\mu(B)$ if $A\cap B=\0$, $\mu(\om)=1$, and if $\mu(X)=\eps>0$ then for every $\delta\in (0,\eps)$ there is a $Y_\delta\subseteq X$ with $\mu(Y_\delta)=\delta$.

Let $\mc{I}=\{A\subseteq\om:\mu(A)=0\}$. Then $\mc{I}$ is an ideal. $\mc{I}$ is nowhere maximal because of $\mu$ is non-atomic (in particular, there are infinite $\mc{I}$-AD families). We show that every $\mc{I}$-AD family is countable. If there was an uncountable $\mc{I}$-AD  family $\mc{A}$, then $\mc{A}_n=\{A\in\mc{A}:\mu(A)>1/n\}$ would be uncountable for some $n\in\om$ and therefore among every $n$ many element of $\mc{A}_n$ there would be two with $\mc{I}$-positive intersection.

Notice that if $\mc{U}$ is $\Ubf{\Delta}^1_n$ ($n\geq 2$) then $\mc{I}$ is also $\Ubf{\Delta}^1_n$ because $A\in\mc{I}$ iff $\forall$ $k\in\om$ $\{n\in\om: |A\cap n|/n<2^{-k}\}\in\mc{U}$, and the function $A\mapsto \{n\in\om: |A\cap n|/n<2^{-k}\}$ is continuous (for every $k$).
\end{proof}

\section{On $(\mc{I},\mrm{Fin})$-ADR's}\label{stradrs}

In this section, we study Question \ref{str}.

\begin{thm}
Assume $\mrm{MA}_\ka$ and let $\mc{I}$ be an everywhere meager ideal, then every $\mc{H}\in [\mc{I}^+]^{\leq\ka}$ has an $(\mc{I},\mrm{Fin})$-ADR.
\end{thm}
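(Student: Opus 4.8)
The plan is to build the refinement $\{A_H : H \in \mc{H}\}$ by a transfinite recursion of length $|\mc{H}| \le \ka$, at each stage using a ccc forcing and $\mrm{MA}_\ka$ to produce a single new set $A_H \subseteq H$ that is $\mc{I}$-positive and almost disjoint (in the $\mrm{Fin}$ sense) from all previously chosen $A_{H'}$. So enumerate $\mc{H} = \{H_\al : \al < \ka\}$ and suppose $\{A_{H_\be} : \be < \al\}$ has been constructed so that each $A_{H_\be} \in \mc{I}^+$, $A_{H_\be} \subseteq H_\be$, and $|A_{H_{\be_0}} \cap A_{H_{\be_1}}| < \om$ for distinct $\be_0,\be_1 < \al$. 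I want a poset $\PP_\al$ whose generic is (a name for) an infinite subset $A$ of $H_\al$ with $A \in \mc{I}^+$ and $|A \cap A_{H_\be}| < \om$ for all $\be < \al$.

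\textbf{The forcing.} I would take conditions in $\PP_\al$ to be pairs $p = (s_p, F_p)$ where $s_p \in [H_\al]^{<\om}$ is a finite ``stem'' and $F_p \in [\al]^{<\om}$ is a finite ``promise'' set, ordered by $q \le p$ iff $s_q \supseteq s_p$, $F_q \supseteq F_p$, and $(s_q \setminus s_p) \cap A_{H_\be} = \0$ for every $\be \in F_p$. This is $\si$-centered (conditions with the same stem are compatible), hence ccc. The generic real $A = \bigcup\{s_p : p \in G\}$ is then almost disjoint from each $A_{H_\be}$ with $\be < \al$: the dense set $E_\be = \{p : \be \in F_p\}$ forces that only finitely much of $A$ (namely $s_p \cap A_{H_\be}$ for $p \in G \cap E_\be$) meets $A_{H_\be}$. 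The two families of dense sets I need are: (1) the $E_\be$ above, for $\be < \al$; and (2) dense sets forcing $A \in \mc{I}^+$, equivalently $A \notin \mc{I}$, \emph{and} $A$ infinite.

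\textbf{Ensuring $\mc{I}$-positivity --- the main obstacle.} This is the crux. Given a condition $p = (s_p, F_p)$, I must find $q \le p$ that puts a ``large'' finite piece into the stem, so that the final $A$ escapes every potential member of $\mc{I}$. The set $T = H_\al \setminus \bigcup_{\be \in F_p} A_{H_\be}$ is a cofinite-in-$H_\al$ restriction of $H_\al$ (since $H_\al \cap A_{H_\be}$ is infinite only for the relevant $\be$, but still each is a set, and $F_p$ is finite --- so $T$ need not be cofinite, but the key point is $T \in \mc{I}^+$, because $H_\al \in \mc{I}^+$ and removing finitely many sets $A_{H_\be}$, each $\subseteq H_\be$ but more to the point each an $\mc{I}$-set intersected... no): here I must be careful. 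The sets $A_{H_\be}$ for $\be \in F_p$ are $\mc{I}$-positive, so $H_\al \setminus \bigcup_{\be \in F_p} A_{H_\be}$ could a priori be in $\mc{I}$. To handle this, I would \emph{not} try to keep $A$ positive against all of $\mc{I}$ by a single dense-set argument; instead, I use that $\mc{I} \restriction H_\al$ is meager, apply Theorem~\ref{t-char} to fix a partition $(P_n)_{n \in \om}$ of $H_\al$ into finite sets with $\{n : P_n \subseteq B\}$ finite for every $B \in \mc{I}$, and include in $\PP_\al$ the requirement/dense sets that $A$ contains $P_n \cap T_p$ for infinitely many $n$, where $T_p = H_\al \setminus \bigcup_{\be \in F_p} A_{H_\be}$. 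For this to work I need: for each condition $p$ and each $N$, there is $n > N$ with $P_n \setminus \bigcup_{\be \in F_p} A_{H_\be} \neq \0$ --- indeed infinitely many such $n$ --- so that I can enlarge the stem by such a $P_n \cap T_p$. If instead $P_n \subseteq \bigcup_{\be \in F_p} A_{H_\be}$ for all large $n$, then $H_\al =^* \bigcup_{\be \in F_p} A_{H_\be}$ modulo an $\mc{I}$-small set, and I claim this cannot happen for a properly maintained recursion: this is exactly the kind of obstruction I must rule out, and the fix is to strengthen the recursion hypothesis so that each $A_{H_\be}$ is chosen ``$\mc{I}$-small outside a fixed skeleton'' --- concretely, to also demand that the generic $A$ avoids infinitely many blocks $P_n$ entirely (a density requirement $\{p : \exists n > N\ P_n \cap s_p = \0 \text{ and } n \in F'_p\}$, using a second promise coordinate), which keeps $H_\al \setminus A_{H_\al}$ in $\mc{I}^+$ and propagates positivity downward through the recursion.

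\textbf{Wrap-up.} With the poset $\PP_\al$ (ccc, in fact $\si$-centered) and the family $\mc{D}_\al$ of dense sets --- the $E_\be$ for $\be < \al$ ($|\al| \le \ka$ many), the ``infinite'' and ``contains infinitely many full blocks $P_n \cap T_p$'' requirements, and the ``avoid infinitely many blocks'' requirements (countably many each) --- we have $|\mc{D}_\al| \le \ka$, so $\mrm{MA}_\ka$ yields a filter $G$ meeting all of them. Set $A_{H_\al} = \bigcup\{s_p : p \in G\}$. Then $A_{H_\al} \subseteq H_\al$; it meets infinitely many $P_n$ in a full (nonempty) piece $P_n \cap T_p$, and since the $P_n$ are the Talagrand partition, any $B \in \mc{I}$ contains $P_n$ for only finitely many $n$, so $A_{H_\al} \not\subseteq B$ modulo finite --- more carefully, one shows $A_{H_\al} \in \mc{I}^+$ directly from ``$\{n : P_n \cap A_{H_\al} \neq \0\}$ infinite'' being incompatible with $A_{H_\al} \in \mc{I}$ \emph{once} we also know $A_{H_\al} \supseteq^* $ nothing forcing it into $\mc{I}$; and by the $E_\be$'s, $|A_{H_\al} \cap A_{H_\be}| < \om$ for all $\be < \al$. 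This closes the recursion, and $\{A_{H_\al} : \al < \ka\}$ is the desired $(\mc{I},\mrm{Fin})$-ADR. The delicate point throughout, and the one I expect to require the most care in the real write-up, is exactly the interaction between the almost-disjointness promises (which remove $\mc{I}$-positive chunks of $H_\al$) and the positivity requirement --- that is, verifying that the relevant dense sets really are dense, which is where the Talagrand partition and a suitably strengthened induction hypothesis (each $A_H$ leaves an $\mc{I}$-positive remainder respecting the partition) do the work.
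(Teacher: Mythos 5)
Your approach --- a transfinite recursion with a separate Mathias-style $\si$-centered forcing at each stage --- correctly sets up the almost-disjointness part, but you have correctly flagged the exact spot where it breaks, and the patch you sketch does not close the hole. The issue: at stage $\al$, the dense sets that are supposed to push the stem of $A_{H_\al}$ into large new blocks must find room inside $T_p = H_\al \setminus \bigcup_{\be\in F_p} A_{H_\be}$. Each $A_{H_\be}$ with $\be \in F_p$ is $\mc{I}$-positive, and pairwise almost disjointness in $\mrm{Fin}$ does \emph{not} prevent a finite union of $\mc{I}$-positive sets from covering $H_\al$ modulo $\mc{I}$ (consider, e.g., partitioning $\om\times\om$ into evens and odds in the second coordinate, relative to $\mrm{Fin}\otimes\mrm{Fin}$). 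Your proposed reinforcement --- arranging that each $A_{H_\be}$ misses infinitely many blocks of \emph{its own} Talagrand partition, so $H_\be\setminus A_{H_\be}\in\mc{I}^+$ --- talks about the partition of $H_\be$, but the density argument at stage $\al$ needs the blocks of the partition of $H_\al$, which is a completely different partition. There is no evident way to propagate "misses infinitely many blocks of the $H_\be$-partition" into "its trace on $H_\al$ misses infinitely many blocks of the $H_\al$-partition," and in fact you would need the much stronger inductive invariant that $H_\al \setminus \bigcup_{\be\in F} A_{H_\be}\in\mc{I}^+$ for \emph{every} finite $F$ and \emph{every} $\al$ simultaneously --- a condition with $\ka$-many instances, none of which is visible to the stage-$\al$ forcing alone. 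So the recursion has a genuine gap at the density step.

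The paper avoids this entirely by using a \emph{single} forcing, not $\ka$ separate ones. A condition is a finite partial function $p$ with $\dom(p)\in[\ka]^{<\om}$ and $p(\al)\in[H_\al]^{<\om}$, and the crucial point of the ordering is that it \emph{freezes} pairwise intersections of stems already present in the domain: $p\leq q$ requires $p(\al)\cap p(\be)=q(\al)\cap q(\be)$ for all $\{\al,\be\}\in[\dom(q)]^2$. This is ccc by a $\Delta$-system argument, and it gives almost disjointness for free (the intersection of $F_G(\al)$ and $F_G(\be)$ is what it was at the first condition with both coordinates). The positivity is then obtained, not by trying to hit large blocks against promises, but by exhibiting a \emph{projection} $e:\PP\to\mbb{C}(H_\al)$ onto the Cohen forcing on $H_\al$; the key to showing $e$ is a projection is that a new element $n$ with $e(p')(n)=0$ merely needs to be placed in some coordinate $\ga_n\neq\al$ with $n\in H_{\ga_n}$ (after extending $\mc{H}$ to cover $\om$), not to avoid any $\mc{I}$-positive set. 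Hence $F_G(\al)$ is a Cohen real in $\mc{P}(H_\al)$, and since $\mc{I}\upharpoonright H_\al$ is meager, it is covered by countably many closed nowhere dense sets, so only countably many extra dense sets per $\al$ are needed to force $F_G(\al)\in\mc{I}^+$. The total is $\ka$ dense sets, and $\mrm{MA}_\ka$ finishes the proof. The moral: your per-stage forcing makes the positivity requirement compete against finitely many $\mc{I}$-positive promise sets, and that competition can be lost; the paper's simultaneous forcing turns positivity into Cohen-genericity over $V$, where the constraints from other coordinates are purely finite and never touch $\mc{I}$-largeness.
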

\begin{proof}
Let $\mc{H}=\{H_\al:\al<\ka\}$ be an enumeration. Define $p\in\PP=\PP(\mc{H})$ iff $p$ is a function, $\dom(p)\in [\ka]^{<\om}$, and $p(\al)\in [H_\al]^{<\om}$ for every $\al\in\dom(p)$; $p\leq q$ iff $\dom(p)\supseteq \dom(q)$, $\forall$ $\al\in\dom(q)$ $p(\al)\supseteq q(\al)$, and $\forall$ $\{\al,\be\}\in[\dom(q)]^2$ $p(\al)\cap p(\be)=q(\al)\cap q(\be)$.

\smallskip
Then $\PP$ is a poset. First of all, we show that $\PP$ has the ccc. Let $\{p_\xi:\xi<\om_1\}\subseteq\PP$. Then $\{\dom(p_\xi):\xi<\om_1\}\subseteq [\ka]^{<\om}$. We can assume that this family forms a $\Delta$-system, $\dom(p_\xi)=D_\xi\cup R$. There are at most $\om$ many functions $R\to \mrm{Fin}$, hence we can also assume that there is a $q\in\PP$ such that $p_\xi\upharpoonright R= q$ for every $\xi<\om_1$. Clearly, $p_\xi\cup p_\zeta\in\PP$ and $p_\xi\cup p_\zeta\leq p_\xi$ for every $\xi,\zeta<\om_1$.

\smallskip
It is easy to see that for every $\al<\ka$ the set $D_\al=\{p\in\PP:\al\in\dom(p)\}$ is dense in $\PP$. If $G$ is a $\{D_\al:\al<\ka\}$-generic filter, then let $F_G:\ka\to\mc{P}(\om)$, $F_G(\al)=\bigcup\{p(\al):p\in G\}$. Clearly, $F_G(\al)\subseteq H_\al$ for every $\al$.

\smallskip
We show that $F_G(\al)\cap F_G(\be)$ is finite for every distinct $\al,\be<\ka$. Let $p\in D_\al\cap G$, $q\in D_\be\cap G$, and $r\in G$ be a common lower bound of them. It is easy to see that $F_G(\al)\cap F_G(\be)=r(\al)\cap r(\be)$.

\smallskip
If somehow we can make sure that $F_G(\al)\in\mc{I}^+$, then we are done because $\{F_G(\al):\al<\ka\}$ will be an $(\mc{I},\mrm{Fin})$-ADR of $\mc{H}$. We show that if $G$ is $(V,\PP)$-generic then $F_G(\al)$ is a Cohen-real in $\mc{P}(H_\al)$ over $V$. It is enough because then $F_G(\al)\notin \mc{I}\upharpoonright H_\al$ (we know that $\mc{I}\upharpoonright H_\al$ is meager) and to show that $V[F_G(\al)]\models F_G(\al)\notin \mc{I}\upharpoonright H_\al$, it is enough to use countable many dense sets. Why? For every $\al$ we can fix a countable family $\mc{C}_\al=\{C^\al_n:n\in\om\}$ of closed nowhere dense subsets of $\mc{P}(H_\al)$ which covers $\mc{I}\upharpoonright H_\al$, and hence have countable many dense subsets of the Cohen forcing such that if a filter is generic for this family then the generic real is not covered by any element of $\mc{C}_\al$. More precisely, we have to translate these dense subsets of the Cohen forcing to dense subsets in $\PP$, it can be done by applying the (inverse of the) projection $\PP\to\mbb{C}(H_\al)$ defined below.

Fix an $\al<\ka$, let $\mbb{C}(H_\al)=\{s:s$ is a finite partial function form $H_\al$ to $2\}$ where $s\leq t$ iff $s\supseteq t$ (then $\mbb{C}(H_\al)$ adds a Cohen subset of $H_\al$ over $V$), and define the map $e=e_\al:\PP\to\mbb{C}(H_\al)$ as follows:
\begin{itemize}
\item[(i)] $\dom(e(p))=\bigcup\{p(\be)\cap H_\al:\be\in \dom(p)\}$;
\item[(ii)] $e(p)(n)=1$ iff $n\in p(\al)$.
\end{itemize}

We show that $e$ is a {\em projection} (see e.g. \cite[page 335]{abraham}) , that is,
\begin{align*} (1)\;\, & e\;\text{is order-preserving, onto, and}\;e(\0)=\0;\\
(2)\;\, & \forall\;p\in\PP\;\forall\;s\in\mbb{C}(H_\al)\;\big(s\leq e(p)\rightarrow\exists\;p'\leq p\;e(p')=s\big).
\end{align*}
Clearly, $e(\0)=\0$. Assume that  $p \leq q$. Then clearly $\dom(e(p))\supseteq \dom(e(q))$. If $n \in \dom(e(q))$ and $n\in q(\al)\subseteq p(\al)$ then $e(q)(n)=e(p)(n)=1$; if  $n\in\dom(e(p))$ and $n\in q(\be)\setminus q(\al)$ for some $\be\ne\al$ then, as $p(\al)\cap p(\be)=q(\al)\cap q(\be)$, $n\in p(\be)\setminus p(\al)$ and hence $e(q)(n)=e(p)(n)=0$. This yields that  $e$ is indeed order preserving.

To show that $e$ is onto, we have to assume that $H_\al\subseteq\bigcup\{H_\be:\be\ne\al\}$ (and w.l.o.g. we can do so by extending $\mc{H}$ to be a cover of $\om$ and adding $\om$ as an element to $\mc{H}$). For an $s\in\mbb{C}(H_\al)$ define $p\in\PP$ as follows: Fix a finite $D\subseteq\ka$ containing $\al$ such that $\dom(s)\subseteq\bigcup\{H_\be:\be\in D\}$, let $\dom(p)=D$, and define $p(\al)=s^{-1}(1)$ and $p(\be)=\{n\in H_\be\cap H_\al:s(n)=0\}$. Then $e(p)=s$.

To show that $e$ satisfies (2), fix a $p\in\PP$, an $s\in \mbb{C}(H_\al)$, and assume that $s\leq e(p)$. Define $p'\in\PP$ as follows: For every $n\in J=(s\setminus e(p))^{-1}(0)$ pick a $\ga_n\in\ka\setminus\{\al\}$ such that $n\in H_{\ga_n}$. Let $\dom(p')=\dom(p)\cup \{\ga_n:n\in J\}$ and define $p'(\al)=p(\al)\cup s^{-1}(1)$, if $\be\in\dom(p')\setminus\{\al\}$ then $p'(\be)=p(\be)\cup\{n\in J:\be=\ga_n\}$. It is straightforward to see that $p'\in\PP$, $p'\leq p$, and $e(p')=s$.

\smallskip
We know that if $G$ is $(V,\PP)$-generic then $e[G]$ generates a $(V,\mbb{C})$-generic filter $G'$. Notice that the Cohen real defined from $G'$ is $F_G(\al)$, so we are done.
\end{proof}

Unfortunately, at this moment, we do not know whether we really needed Martin's Axiom in the previous theorem or it holds in $\mrm{ZFC}$. We show that if we attempt to construct a counterexample, that is, say a tall Borel ideal $\mc{I}$ and a family $\mc{H}\in [\mc{I}^+]^{<\mf{c}}$ without a $(\mc{I},\mrm{Fin})$-ADR, we have to be careful.
Let us define the following cardinal invariants of tall ideals on $\om$: The {\em star-additivity} of $\mc{I}$ is
\[ \mrm{add}^*(\mc{I})=\min\big\{|\mc{X}|:\mc{X}\subseteq\mc{I}\;\text{and}\;\nexists\;A\in \mc{I}\;\forall\;X\in\mc{X}\;X\subseteq^* A\big\},\]
the {\em Fodor number} of $\mc{I}$ is
\[ F(\mc{I})=\min\big\{|\mc{H}|:\mc{H}\subseteq\mc{I}^+\;\text{has no}\;\mc{I}\text{-ADR}\big\},\]
and the {\em star-Fodor number} of $\mc{I}$ is
\[ F^*(\mc{I})=\min\big\{|\mc{H}|:\mc{H}\subseteq\mc{I}^+\;\text{has no}\;(\mc{I},\mrm{Fin})\text{-ADR}\big\}.\]

Clearly, $\mc{I}$ is a P-ideal iff $\mrm{add}^*(\mc{I})>\om$.
Proposition \ref{refgen} says that $F(\mc{I})=\mf{c}$ whenever $\mc{I}$ is everywhere meager; and clearly, $F^*(\mc{I})\leq F(\mc{I})$.

\begin{fact}
If $\mrm{add}^*(\mc{I})< F(\mc{I})$ then $\mrm{add}^*(\mc{I})< F^*(\mc{I})$. If $\mrm{add}^*(\mc{I})=F(\mc{I})$ then $F(\mc{I})=F^*(\mc{I})$.
\end{fact}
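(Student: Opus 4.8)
The plan is to deduce both parts from a single statement, which I will call $(\star)$:
\emph{if $\mc{H}\subseteq\mc{I}^+$ has an $\mc{I}$-ADR and $|\mc{H}|\leq\mrm{add}^*(\mc{I})$, then $\mc{H}$ has an $(\mc{I},\mrm{Fin})$-ADR.}
Granting $(\star)$ and the inequality $F^*(\mc{I})\leq F(\mc{I})$ already noted above, both implications are immediate. For the first, assume $\mrm{add}^*(\mc{I})<F(\mc{I})$ and let $|\mc{H}|\leq\mrm{add}^*(\mc{I})$. Then $|\mc{H}|<F(\mc{I})$, so $\mc{H}$ has an $\mc{I}$-ADR, and by $(\star)$ it has an $(\mc{I},\mrm{Fin})$-ADR. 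Since this holds for every $\mc{H}$ of size $\leq\mrm{add}^*(\mc{I})$, we get $F^*(\mc{I})>\mrm{add}^*(\mc{I})$. For the second, assume $\mrm{add}^*(\mc{I})=F(\mc{I})$ and let $|\mc{H}|<F(\mc{I})$. Then $\mc{H}$ has an $\mc{I}$-ADR and $|\mc{H}|<\mrm{add}^*(\mc{I})$, so $(\star)$ yields an $(\mc{I},\mrm{Fin})$-ADR of $\mc{H}$; hence $F^*(\mc{I})\geq F(\mc{I})$, which together with $F^*(\mc{I})\leq F(\mc{I})$ gives $F^*(\mc{I})=F(\mc{I})$.

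I would prove $(\star)$ as follows. Fix an enumeration $\mc{H}=\{H_\al:\al<\lam\}$ with $\lam=|\mc{H}|$ and an $\mc{I}$-ADR $\{E_\al:\al<\lam\}$ of $\mc{H}$, so $E_\al\subseteq H_\al$, $E_\al\in\mc{I}^+$, and $E_\al\cap E_\be\in\mc{I}$ whenever $\al\neq\be$. For each $\al<\lam$ the family $\{E_\al\cap E_\be:\be<\al\}$ is a subfamily of $\mc{I}$ of size $|\al|<\lam\leq\mrm{add}^*(\mc{I})$, so by the definition of $\mrm{add}^*(\mc{I})$ there is $B_\al\in\mc{I}$ with $E_\al\cap E_\be\subseteq^*B_\al$ for every $\be<\al$; put $A_\al=E_\al\setminus B_\al$. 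Then $A_\al\subseteq E_\al\subseteq H_\al$, and since $E_\al=A_\al\cup(E_\al\cap B_\al)$ with $E_\al\cap B_\al\in\mc{I}$ and $E_\al\notin\mc{I}$, we have $A_\al\in\mc{I}^+$. Finally, for $\be<\al$,
\[ A_\al\cap A_\be\ \subseteq\ A_\al\cap E_\be\ =\ (E_\al\cap E_\be)\setminus B_\al, \]
which is finite because $E_\al\cap E_\be\subseteq^*B_\al$; as every unordered pair is handled at its larger index, $\{A_\al:\al<\lam\}$ is an $(\mc{I},\mrm{Fin})$-ADR of $\mc{H}$.

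I do not expect a serious obstacle: once $(\star)$ is isolated, the rest is routine, and in fact $A_\al$ depends only on the fixed data $\{E_\al\cap E_\be:\be<\al\}$, so no genuine recursion is needed. The one point deserving care is the cardinal arithmetic in the construction, namely that $|\al|<\mrm{add}^*(\mc{I})$ holds at \emph{every} index $\al<\lam$; this is exactly the hypothesis $\lam=|\mc{H}|\leq\mrm{add}^*(\mc{I})$ together with the fact that an ordinal strictly below a cardinal has strictly smaller cardinality. (When $\mc{H}$ is finite or countable one may simply take $B_\al=\bigcup_{\be<\al}(E_\al\cap E_\be)\in\mc{I}$, so the construction never stalls, in accordance with $\mrm{add}^*(\mc{I})\geq\om$.)
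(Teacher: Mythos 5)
Your proof is correct and follows essentially the same approach as the paper: fix an $\mc{I}$-ADR, and at each index use $\mrm{add}^*(\mc{I})$ to find a pseudo-union $B_\al$ of the earlier intersections and remove it. The only difference is cosmetic — you isolate the auxiliary lemma $(\star)$ and derive both implications from it, whereas the paper proves the first implication directly and notes that the second follows by the same argument.
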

\begin{proof}
Assume that $\mc{H}=\{H_\al:\al<\ka\}\subseteq\mc{I}^+$ where $\ka=\mrm{add}^*(\mc{I})<F(\mc{I})$. First fix an $\mc{I}$-ADR $\{A_\al:\al<\ka\}$ of $\mc{H}$ ($A_\al\subseteq H_\al$). Then for every $\al<\ka$ fix a $B_\al\in\mc{I}$ such that $A_\al\cap A_\be\subseteq^* B_\al$ for every $\be<\al$, and let $A'_\al=A_\al\setminus B_\al$. Then $\{A'_\al:\al<\ka\}$ is an $(\mc{I},\mrm{Fin})$-ADR of $\mc{H}$.  The second statement can be proved by the same argument.
\end{proof}

In particular, if $\mc{I}$ is an everywhere meager P-ideal and $F^*(\mc{I})<\mf{c}$, then $F^*(\mc{I})<F(\mc{I})$ hence  $\mrm{add}^*(\mc{I})<F(\mc{I})$ and so $\om_1\leq\mrm{add}^*(\mc{I})<F^*(\mc{I})<\mf{c}$, therefore  $\mf{c}\geq\om_3$.

\section{Mixing reals}\label{mixrealssec}

In this section, we study two closely related properties of forcing notions, one of which is slightly stronger then ``$[\om]^\om\cap V$ has an ADR in $V^\PP$''.

\begin{Def}
Let $\PP$ be a forcing notion. We say that an $f\in \om^\om\cap V^\PP$ is a {\em mixing real} over $V$ if  $|f[X]\cap Y|=\om$ for every $X,Y\in [\om]^\om\cap V$.
If $f$ is one-to-one, then we call it an {\em injective mixing real} or  {\em mixing injection}.
\end{Def}

Clearly, in the definition above, it is enough to require that $f[X]\cap Y\ne\0$ for every $X,Y\in [\om]^\om\cap V$.

\begin{prop}\label{mixchar}
Let $\PP$ be a forcing notion.
Then the following are equivalent:
\begin{itemize}
\item[(i)] There is a mixing real $f\in\om^\om\cap V^\PP$ over $V$.
\item[(ii)] There is an $f\in\om^\om\cap V^\PP$ such that $f[X]=\om$ for all $X\in [\om]^\om\cap V$.
\item[(iii)] There is a partition $(Y_n)_{n\in\om}$ of $\omega$ into infinite sets in $V^\PP$ such that $\forall$ $X\in [\om]^\om\cap V$ $\forall$ $n$ $|X\cap Y_n|=\om$.
\item[(iii)'] There is a partition $(Y_n)_{n\in\om}$ of $\omega$ into infinite sets in $V^\PP$ such that $\forall$ $X\in [\om]^\om\cap V$ $\forall$ $n$ $X\cap Y_n\ne\0$.
\end{itemize}
\end{prop}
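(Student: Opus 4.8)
The plan is to prove the chain of equivalences by showing $(\mathrm{ii}) \Leftrightarrow (\mathrm{iii}) \Leftrightarrow (\mathrm{iii})'$ and then $(\mathrm{ii}) \Rightarrow (\mathrm{i}) \Rightarrow (\mathrm{iii})'$, exploiting the remark just before the proposition that in (i) it suffices to require $f[X] \cap Y \ne \emptyset$ for all $X,Y \in [\om]^\om \cap V$. The key observation is that a function $f \colon \om \to \om$ with $f[X] = \om$ for every ground-model infinite $X$ is essentially the same datum as a partition of $\om$ into infinitely many pieces each of which meets every ground-model infinite set: given such an $f$, set $Y_n = f^{-1}(\{n\})$; conversely, given the partition $(Y_n)_{n\in\om}$, let $f$ be the function sending each $k$ to the unique $n$ with $k \in Y_n$.

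First I would do $(\mathrm{ii}) \Leftrightarrow (\mathrm{iii})$. For $(\mathrm{ii}) \Rightarrow (\mathrm{iii})$: given $f$ with $f[X] = \om$ for all $X \in [\om]^\om \cap V$, put $Y_n = f^{-1}(\{n\})$. These are pairwise disjoint with union $\om$. To see each $Y_n$ is infinite and in fact meets every $X \in [\om]^\om \cap V$ infinitely: fix $X$ and $n$; if $X \cap Y_n$ were finite, then $X' = X \setminus Y_n \in [\om]^\om \cap V$ would satisfy $n \notin f[X']$, contradicting $f[X'] = \om$. (Infinitude of each $Y_n$ follows by taking $X = \om$.) For $(\mathrm{iii}) \Rightarrow (\mathrm{ii})$: given the partition, define $f$ by $f(k) = n \Leftrightarrow k \in Y_n$; then for $X \in [\om]^\om \cap V$ and any $n$, $X \cap Y_n \ne \emptyset$ gives $n \in f[X]$, so $f[X] = \om$. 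Next, $(\mathrm{iii}) \Rightarrow (\mathrm{iii})'$ is trivial. For $(\mathrm{iii})' \Rightarrow (\mathrm{iii})$: suppose $(Y_n)$ is a partition into infinite sets with $X \cap Y_n \ne \emptyset$ for all $X \in [\om]^\om \cap V$ and all $n$; fix $X$ and $n$ and suppose $X \cap Y_n$ is finite — then $X \setminus (X \cap Y_n) \in [\om]^\om \cap V$ (removing finitely many elements keeps it in $V$ and infinite) still must meet $Y_n$, contradiction. Hence $|X \cap Y_n| = \om$.

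Then I would close the loop through (i). For $(\mathrm{ii}) \Rightarrow (\mathrm{i})$: if $f[X] = \om$ for every $X \in [\om]^\om \cap V$, then in particular for any $X, Y \in [\om]^\om \cap V$ we have $Y \subseteq \om = f[X]$, so $f[X] \cap Y = Y$ is infinite; thus $f$ is a mixing real. For $(\mathrm{i}) \Rightarrow (\mathrm{iii})'$: let $f$ be a mixing real and set $Y_n = f^{-1}(\{n\})$. These partition $\om$. Each $Y_n$ is infinite: otherwise $\om \setminus Y_n \in [\om]^\om \cap V$ but $f[\om \setminus Y_n] \cap \{n\} = \emptyset$, which — using the weakened form of (i) with $X = \om \setminus Y_n$ and $Y = \{n\}$... wait, $\{n\}$ is finite, so I should instead argue directly: $|f[\om \setminus Y_n] \cap Y| = \om$ for every infinite ground-model $Y$, but $f[\om \setminus Y_n] \subseteq \om \setminus \{n\}$, which is fine; to get infinitude of $Y_n$, note rather that if $Y_n$ were finite then $f$ restricted to the cofinite set $\om \setminus Y_n$ would still be mixing and omit value $n$, and then taking $X = \om \setminus Y_n$, $Y = \om$ gives $f[X] \cap Y = f[X] = \om \setminus \{n\} \ne \om$... this doesn't contradict mixing. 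So infinitude of $Y_n$ needs the honest argument: pick $Y = \om$ (infinite, in $V$) and observe $f[\om] \supseteq$ — hmm. Actually the clean route: for any $m$, $\{m\} \cup (\text{an infinite set})$... The correct fix is that mixing with $X$ ranging over all infinite ground-model sets forces $f$ surjective with infinite fibers; concretely $|f[\om] \cap Y| = \om$ for all infinite $Y \in V$ forces $f[\om] = \om$, i.e. surjectivity, and then if some $Y_n$ were finite, $\om \setminus Y_n \in [\om]^\om\cap V$ and mixing applied to $X = \om\setminus Y_n$, $Y = Y_n'$ where $Y_n'$ is any infinite ground model set disjoint from... no — simplest: $Y_n$ finite $\Rightarrow f$ is not mixing because for the infinite ground-model set $X = \om \setminus Y_n$ and infinite ground-model set $Y = \{n\} \cup Z$ with $Z$ any infinite set, $f[X] \cap Y \subseteq Z$, still infinite. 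Given these subtleties, I will argue infinitude via: $f^{-1}(\{n\})$ finite would make $f \restriction (\om \setminus f^{-1}(\{n\}))$ a function whose image misses $n$, and then shrink: since $f$ is mixing, for $X \in [\om]^\om \cap V$, $f[X] \cap \{n, n+1, n+2, \ldots\}$ must be infinite hence $f$ takes infinitely many values $\geq n$ on $X$; iterating / using that the fibers must be infinite because a finite fiber can be removed from $\om$ and $f$ stays mixing but now a value is simply absent in a single fiber — I will present the clean version: $X := \om$, and for each $n$ pick an infinite $Y \in V$; mixing gives $f[\om] \cap Y$ infinite, so $f[\om] = \om$; then for the fiber, use $X := \om \setminus Y_n$ and note $f[X] = \om$ still by the same argument, forcing $Y_n$ not to have been all of $f^{-1}(\{n\})$ — i.e. $Y_n$ infinite.) Finally the non-emptiness condition in $(\mathrm{iii})'$: for $X \in [\om]^\om \cap V$ and $n$, $X \cap Y_n = X \cap f^{-1}(\{n\})$, and if empty then $n \notin f[X]$, contradicting $f[X] = \om$ which we just derived.

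The main obstacle is exactly the bookkeeping around infinite fibers in the direction $(\mathrm{i}) \Rightarrow (\mathrm{iii})'$: one must leverage that $X$ ranges over \emph{all} infinite ground-model sets (including cofinite ones) to upgrade ``$f[X]\cap Y$ infinite'' first to surjectivity of $f$ and then to infinitude of every fiber; the trick is that removing a finite fiber from $\om$ leaves a cofinite ground-model set on which $f$ is still mixing, so the omitted-value argument bites. Everything else is elementary set-theoretic manipulation with the decomposition $Y_n = f^{-1}(\{n\})$ and its inverse, together with the standing observation that deleting finitely many elements from a ground-model infinite set keeps it infinite and in the ground model.
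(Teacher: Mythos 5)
Most of your proposal is fine and matches the spirit of the paper's proof: the equivalences (ii)~$\leftrightarrow$~(iii)~$\leftrightarrow$~(iii)${}'$ via $Y_n = f^{-1}(\{n\})$ and the trivial implication (ii)~$\Rightarrow$~(i) are all correct as you argue them, including the absoluteness point that removing the finite set $X\cap Y_n$ from a ground-model $X$ leaves a ground-model set. But you never actually close the loop: the direction from (i) back to the others is where your argument breaks down, and you yourself notice that it is breaking down without repairing it. The problem is that a mixing real $f$ need \emph{not} be surjective and need not have infinite (or even nonempty) fibers. Your ``clean version'' asserts that $|f[\omega]\cap Y|=\omega$ for all infinite $Y\in V$ forces $f[\omega]=\omega$, but this is false: $f[\omega]=\omega\setminus\{0\}$ satisfies the mixing condition perfectly well, since deleting one point from $Y$ does not affect infinitude. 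Likewise, modifying a mixing real to make one fiber empty or finite preserves mixing. So the fibers $Y_n=f^{-1}(\{n\})$ of a mixing real simply do not constitute the partition required in (iii)${}'$, and your ``honest argument'' for infinitude of the fibers cannot be made to work.

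The missing idea is to \emph{compose} $f$ with a fixed ground-model map. Fix in $V$ a partition $(C_n)_{n\in\omega}$ of $\omega$ into infinite sets and let $g\colon\omega\to\omega$ be the ground-model function that is constantly $n$ on $C_n$. If $f$ is mixing, set $h = g\circ f$. Then for every $X\in[\omega]^\omega\cap V$ and every $n$, mixing gives $f[X]\cap C_n$ infinite (since $C_n\in[\omega]^\omega\cap V$), and applying $g$ to any point of this intersection yields $n\in h[X]$; hence $h[X]=\omega$, i.e.\ $h$ witnesses (ii). This is exactly what the paper does, and it is the step that leverages the freedom in (i) to range over \emph{all} ground-model infinite targets $Y$ (here $Y=C_n$) to manufacture a function with genuinely infinite fibers. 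Once you have (i)~$\Rightarrow$~(ii), the rest of your argument establishes the cycle.
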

\begin{proof}
(ii)$\rightarrow$(i) and (iii)$\leftrightarrow$(iii)' are trivial. (ii)$\leftrightarrow$(iii)' because let $Y_n=f^{-1}(n)$ (and vice versa). Finally, (i) implies (ii): Fix a partition $(C_n)_{n\in\om}$ of $\om$ into infinite sets in $V$ and let $g:\om\to\om$, $g\upharpoonright C_n\equiv n$. If $f$ is a mixing real over $V$, then $h=g\circ f$ has the required property.
\end{proof}

(iii) says that mixing reals can be seen as ``infinite splitting parititions''. Recall that a set $S\subseteq\om$ is a {\em splitting real} over $V$ if $|X\cap S|=|X\setminus S|=\om$ for every $X\in [\om]^\om\cap V$, in other words, $P=\{S,\om\setminus S\}$ is a partition of $\om$ such that $\forall$ $X\in [\om]^\om\cap V$ $\forall$ $Y\in P$ $|X\cap Y|=\om$.

Why is this property relevant to almost-disjoint refinements?
Fix an AD family $\mc{A}=\{A_\al:\al<\mf{c}\}$ in $V$, and let $\{X_\al:\al<\mf{c}\}$ be an enumeration of $[\om]^\om$ in $V$. If $f\in\om^\om\cap V^\PP$ is a mixing injection over $V$, then the family $\{f[A_\al]\cap X_\al:\al<\mf{c}\}\in V^\PP$ is an ADR of $[\om]^\om\cap V$.

\begin{prop}\label{i-iv} Let $\PP$ be a forcing notion.
\begin{itemize}
\item[(i)] If $\PP$ adds random  reals then it adds mixing reals.
\item[(ii)] If $\PP$ adds dominating reals, then it adds mixing reals.
\item[(iii)] If $\PP$ adds Cohen reals then it adds mixing injections.
\item[(iv)] If $\PP$ adds mixing injections then it adds unbounded reals.
\item[(v)] If $\PP$ has the Laver-property, then it does not add injective mixing reals.
\end{itemize}
\end{prop}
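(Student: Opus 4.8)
The plan is to prove the five claims of Proposition~\ref{i-iv} separately, reducing each to a standard combinatorial or forcing-theoretic fact about the relevant type of generic real; the characterization in Proposition~\ref{mixchar} will be the main tool throughout, since it lets us replace ``mixing real'' by ``partition $(Y_n)_{n\in\om}$ of $\om$ into infinite pieces each meeting every ground-model infinite set''.

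\textbf{Parts (i) and (ii).} For (ii) I would start from a dominating real $g\in\om^\om\cap V^\PP$ and build a partition $(Y_n)_{n\in\om}$ by letting the $n$-th block of each $Y_k$ be an interval whose length is governed by $g$: partition $\om$ into consecutive intervals $I_n$ with $|I_n|$ growing fast enough (say $|I_n|=g(n)+n$), and inside $I_n$ distribute the points among $Y_0,\dots,Y_n$; then any ground-model infinite $X$ must, by domination, eventually have at least one point in each $I_n$ landing in a prescribed residue class, forcing $X\cap Y_k$ infinite for every $k$. (The bookkeeping here is the one point needing care — one wants the intervals long enough that $X$'s gaps, which are bounded by a ground-model function, cannot skip a designated position.) For (i) I would use a random real, viewed as an element of $2^\om$ or as a sequence of independent choices, to define $f:\om\to\om$ (or the partition) so that for fixed ground-model infinite $X$ and fixed $n$, the event ``$f[X]\cap\{n\}=\0$'' has measure zero — e.g.\ let $f(k)$ be chosen ``uniformly'' from an $n$-fold alphabet on the $n$-th interval, so the probability that none of the (infinitely many) elements of $X$ in those intervals maps to a given value is $0$; since there are only countably many pairs $(X,n)$ with $X\in V$, genericity over $V$ gives the conclusion.

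\textbf{Parts (iii) and (iv).} For (iii), a Cohen real naturally codes a function $\om\to\om$, and the sets $\{p : p \text{ forces } f[X]\cap Y\ne\0\}$ are dense for each ground-model pair $X,Y$ — in fact one can even arrange $f$ injective, since at each finite stage only finitely many values are committed and $X$ is infinite, so an extension can always place a fresh value of $f$ on a point of $X$ inside $Y$; there are only $|\mc{P}(\om)\cap V|$-many such dense sets to meet, but the point is that \emph{Cohen forcing} meets them all, so this is a genericity argument using density of the natural requirements. For (iv), I would argue contrapositively via Proposition~\ref{mixchar}(iii): given a partition $(Y_n)_{n\in\om}$ of $\om$ into infinite sets in $V^\PP$ meeting every ground-model infinite set, define $h(n)=\min(Y_n\setminus n)$ or better, let $h$ enumerate, for each $n$, an element of $Y_n$ large enough; if $h$ were dominated by some ground-model $g$, then the ground-model infinite set $X=\bigcup_n (Y_n\cap[0,g(n)])$... more cleanly: the set $X=\{\min Y_n : n\in\om\}$ is infinite but essentially ``captured'' by a slow ground-model function unless $h$ is unbounded; I expect the cleanest route is to show directly that a mixing injection $f$ itself, or the function $n\mapsto f(n)$ suitably modified, cannot be dominated, because if $f\le^* g$ with $g\in V$ then $f[\,\om\setminus\dom?\,]$ ... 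I would choose $X\in[\om]^\om\cap V$ and $Y\in[\om]^\om\cap V$ with $Y$ sparse enough (relative to $g$) that $f[X]\subseteq g$-bounded region forces $f[X]\cap Y$ finite, contradiction.

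\textbf{Part (v), the main obstacle.} This is the one genuinely forcing-theoretic step. Recall the Laver property: for every $h\in\om^\om\cap V^\PP$ bounded by a ground-model function there is a ground-model ``slalom'' $S:\om\to[\om]^{<\om}$ with $|S(n)|\le 2^n$ (or $\le n$, depending on the formulation) and $h(n)\in S(n)$ for all $n$. Suppose toward a contradiction that $f\in\om^\om\cap V^\PP$ is an injective mixing real. The issue is that $f$ need not be bounded by a ground-model function — indeed by part (iv) it is unbounded — so the Laver property does not apply to $f$ directly. The fix is to extract a \emph{bounded} derived real: since $f$ is injective, for each $n$ the value $f^{-1}(n)$ (if it exists) is a single natural number; consider instead, for a fixed ground-model partition $(I_k)_{k\in\om}$ of $\om$ into finite intervals, the function $h(k)=|f^{-1}(I_k)\cap I_k|$ or, more usefully, the function recording \emph{which} elements of $I_k$ are hit by $f\restriction I_k$ — this is bounded by a ground-model function (it lives in $\prod_k \mc{P}(I_k)$). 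Apply the Laver property to get a ground-model slalom, and then use the slalom to build, in $V$, two infinite sets $X,Y$ with $f[X]\cap Y=\0$: roughly, the slalom tells us in the ground model a small set of ``possible behaviors'' of $f$ on each interval, and because each $|S(k)|$ is much smaller than $2^{|I_k|}$ we can diagonalize — in $V$ — choosing $Y$ to avoid all the finitely many values that $f$ could possibly take on a suitably chosen infinite $X$. The delicate part, and where I would spend the most care, is setting up the intervals $I_k$ and the bounded coding of $f$ so that the slalom bound $2^k$ genuinely beats the number of configurations we must avoid, so that the ground-model diagonalization producing the ``bad'' pair $X,Y$ actually goes through; this is analogous to (and can probably cite) the standard proof that the Laver property implies no Cohen reals are added, since a mixing injection is in particular ``Cohen-like'' on each interval.
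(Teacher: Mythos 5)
Your high-level plan matches the paper's: treat the five parts separately and lean on Proposition~\ref{mixchar} where relevant. Parts (i) and (iii) are essentially right. For (i) the paper works directly with the natural probability measure on $\om^\om$ (with $\lam([s])=2^{-s(0)-\dots-s(n-1)-n}$) and shows the set $A_{X,Y}=\{f:|f[X]\cap Y|<\om\}$ is null; your ``choose values independently so each bad event has measure zero'' is the same calculation in disguise. For (iii) the paper reformulates the problem as ``the generic of $(\mathrm{Inj},\supseteq)$ avoids the meager set $A_{X,Y}\cap\mathrm{INJ}$,'' which is the Baire-category twin of your density argument; either form works.

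The remaining three parts have real gaps. For (ii), the paper does \emph{not} try to distribute each interval $I_n$ among $Y_0,\dots,Y_n$ directly. That approach breaks down because a ground-model $X$ may meet each $I_n$ in only a single point, which then lies in exactly one $Y_k$; nothing forces these lone points to rotate through all residue classes. The paper instead uses the equivalence ``adding a dominating real $=$ adding a dominating interval partition $(P_n)$'' and then takes $Y_k=\bigcup_{n\in B_k}P_n$ for a partition of $\om$ into infinite $B_k$'s: given $X\in V$, cover $\om$ in $V$ by intervals $Q_m$ each meeting $X$, so by domination almost every $P_n$ (hence almost every block of each $Y_k$) meets $X$. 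That extra layer is exactly what fills the gap you flagged.

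For (iv), your sketch never produces the construction. You correctly sense that both $f$ and $f^{-1}$ being dominated by a ground-model $g$ should be fatal, but neither ``$Y$ sparse relative to $g$'' nor ``$X=\{\min Y_n\}$'' (which is not even a ground-model set) gives a pair with $f[X]\cap Y=\0$. What you need is the \emph{interleaving}: assuming $f,f^{-1}<g$ with $g$ strictly increasing, build $x_0<y_0<x_1<y_1<\cdots$ in $V$ with $g$-sized gaps ($x_0=0$, $y_0=g(x_0)$, $x_{n+1}=g(y_n)$, $y_{n+1}=g(x_{n+1})$). Then $f(x_k)=y_l$ forces $y_l<g(x_k)\le x_l<y_l$ when $k\le l$, and $x_k=f^{-1}(y_l)<g(y_l)\le x_k$ when $k>l$; either way a contradiction. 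Also note that appealing to Proposition~\ref{mixchar}(iii) is a wrong turn here: that partition characterization is for mixing \emph{reals}, not injections, since an injection's fibers are singletons rather than infinite sets.

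For (v), your overall strategy (extract a bounded derived object, apply the Laver property, diagonalize in $V$) is the right one and is what the paper does. But the specific bounded object matters: tracking only $f\restriction I_k$ or $f^{-1}(I_k)\cap I_k$ for \emph{disjoint} intervals $I_k$ loses all off-diagonal information — if $i\in I_j$ and $f(i)\in I_k$ with $j\ne k$, none of your blocks ever records the pair $(i,f(i))$, so the slalom cannot help you avoid that collision. The paper uses the \emph{cumulative} rectangles $a_n\times a_n$ (with $a_{n+1}-a_n>(n+2)2^{n+1}$) and the function $\dot g(n)=\dot f\cap(a_n\times a_n)$; this way every potential pair $(x_k,y_l)$ eventually falls inside some $a_m\times a_m$ and is controlled by $S(m)$. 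The pigeonhole then uses injectivity to pick $x_{n+1}\in a_{n+1}\setminus a_n$ avoiding $Y_n$ (at most $(n+1)2^{n+1}$ forbidden pairs vs.\ $>(n+2)2^{n+1}$ candidates), and then picks $y_{n+1}$ avoiding all slalom images of $x_0,\dots,x_{n+1}$. Finally, the idea to ``probably cite'' the fact that the Laver property prevents Cohen reals is not available: that would require the implication ``mixing injection $\Rightarrow$ Cohen real,'' and whether that holds is precisely the open Question~\ref{quemix} in the paper. So (v) genuinely needs its own argument, not a citation.
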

\begin{proof}
(i): Let $\lam$ be the usual probability measure on $\om^\om$, that is, $\lam$ is uniquely determined by the values $\lam([s])=2^{-s(0)-s(1)-\dots-s(n-1)-n}$ where $s:n\to\om$ and $[s]=\{f\in\om^\om:s\subseteq f\}$.  If $\mc{N}_\lam=\{A\subseteq\om^\om:\lam(A)=0\}$, then it is well-know that $\mrm{Borel}(\om^\om)/\mc{N}_\lam$ is forcing equivalent to the random forcing.
It is enough to see that the set $A_{X,Y}=\{f\in\om^\om:|f[X]\cap Y|<\omega\}$ is a null set in $\om^\om$ for every $X,Y\in [\om]^\om$: $A_{X,Y}=\bigcup_{n\in\om}\{f\in\om^\om:f[X]\cap Y\subseteq n\}$ and if $X=\{x_k:k\in\om\}$ and  $n\in\om$ then $\{f:f[X]\cap Y\subseteq n\}=\{f:\forall$ $k$ $f(x_k)\in n\cup (\om\setminus Y)\}$. Clearly, $\sum\{2^{-m-1}:m\in n\cup (\om\setminus Y)\}=\eps<1$ and hence $\lam(\{f:f[X]\cap Y\subseteq n\}) \leq \lim_{k\to\infty}\eps^k=0$.

\smallskip
(ii): Trivial modification of the proof of the fact (see e.g. \cite[Fact 20.1]{gentle}) that adding a dominating real implies adding a splitting real works here as well: Adding a dominating real is equivalant to adding a dominating partition $(P_n)_{n \in\om}$ of $\om$ into finite sets (see \cite[Thm. 2.10]{Blass}), that is, for every partition $(Q_m)_{m\in\om}\in V$ of $\om$ into finite sets, $\forall^\infty$ $n$ $\exists$ $m$ $Q_m\subseteq P_n$. Now any infinite partition of $\om$ containing of unions of infinitely many $P_n$'s satisfy (iii) from Proposition \ref{mixchar}.

\smallskip
(iii): We can talk about {\em injective Cohen-reals}. Simply consider the forcing notion $(\mathrm{Inj},\supseteq)$ where $\mathrm{Inj}=\{s\in\om^{<\om}:s$ is one-to-one$\}$, or the forcing notion $(\mathrm{Borel}(\mathrm{INJ})\setminus \mc{M}(\mathrm{INJ}),\subseteq)$ where $\mathrm{INJ}=\{f\in\om^\om:f$ is one-to-one$\}$ is a nowhere dense closed subset on $\om^\om$ and $\mc{M}(\mathrm{INJ})$ is the meager ideal on this Polish space.  It is not difficult to see that these forcing notions are forcing equivalent to the Cohen forcing (moreover, $\mrm{INJ}$ is homemomorphic to $\om^\om$).

If $c$ is an injective Cohen-real over $V$, then $c$ is mixing: For every $X,Y\in [\om]^\om$, the set  $A'_{X,Y}=A_{X,Y}\cap\mathrm{INJ}=\bigcup_{n\in\om}\big\{f\in\mathrm{INJ}:f[X]\cap Y\subseteq n\big\}$ is meager because $\{f\in\mathrm{INJ}:f[X]\cap Y\subseteq n\}$ is closed and nowhere dense in $\mathrm{INJ}$.

\smallskip
(iv): Let $f\in\mathrm{INJ}\cap V^\PP$ be a mixing injection and assume on the contrary that there is a strictly increasing $g\in \om^\om\cap V$ such that $f,f^{-1}<g$ (where of course $f^{-1}<g$ means that $f^{-1}(k)<g(k)$ for every $k\in\mathrm{ran}(f)$).

We define $X=\{x_k:k\in\om\},Y=\{y_k:k\in\om\}\in [\om]^\om$ in $V$ as follows: $x_0=0$, $y_0=g(0)$, $x_n=\max\{g(y_k):k<n\}$, and $y_n=g(x_n)$. Suppose that  $f(x_k)=y_l$ for some $k,l \in \om$. If $k \leq l$ then   \[f(x_k)<g(x_k) = g\big(\max_{m<k}g(y_m)\big) \leq g\big(\max_{m<l}g(y_m)\big)=x_l<g(x_l)=y_l,\] a contradiction. Now, if $k > l$ then \[x_k=f^{-1}(y_l)<g(y_l) \leq \max\{g(y_m):m<k\} =x_k\] which is again impossible. Thus, $f[X]\cap Y=\0$, so $f$ cannot be a mixing injection.

\smallskip
(v): Fix a sequence $(a_n)_{n\in\om}\in\om^\om\cap V$ satisfying $a_{n+1}-a_n>(n+2)2^{n+1}$ and $a_0>1$. Assume that $p\vd \dot{f}\in\mrm{INJ}$. Let $\dot{g}$ be a $\PP$-name for a function on $\om$ such that $p\vd \dot{g}(n)=\dot{f}\cap (a_n\times a_n)=\{(k,l)\in a_n\times a_n:f(k)=l\}$ for every $n$ (in particular, $p\vd$``$\dot{g}(n)$ is an injective partial function from $a_n$ to $a_n$''). Then $p\vd\dot{g}\in\prod_{n\in\om}\mc{P}(a_n\times a_n)$ hence, applying the Laver property of our forcing notion to the name $\dot{g}$ for a function from $\omega$ to $[\om\times\om]^{<\om}$, there is a $q\leq p$ and a ``slalom'' $S:\om\to \big[[\om\times\om]^{<\om}\big]^{<\om}$ in $V$ which catches $\dot{g}$, that is, $S(n)\subseteq \mathcal{P}(a_n\times a_n)$, $|S(n)|\leq 2^n$, and  $q\vd\dot{g}(n)\in S(n)$ for every $n$. Without loss of generality we can assume that all elements of $S(n)$ are injective partial functions $a_n\to a_n$.

Working in $V$, we will define the sets $X=\{x_n:n\in\om\},Y=\{y_n:n\in\om\}\in [\om]^\om$ by recursion on $n$ such that $q\vd \dot{f}[X]\cap Y=\0$.

Let $x_0\in a_0$ be arbitrary. We know that there is a $y_0\in a_0$ such that $(x_0,y_0)\notin \bigcup S(0)$ (a function cannot cover $\{(x_0,k):k<a_0\}$).

Assume that we already have $X_n=\{x_k:k\leq n\}$ and $Y_n=\{y_k:k\leq n\}$ such that $(X_n\times Y_n)\cap \bigcup_{k\leq n}\bigcup S(k)=\0$. There is an $x_{n+1}\in a_{n+1}\setminus a_n $ such that
\[ \big\{s(x_{n+1}):s\in S(n+1),x_{n+1}\in\dom(s)\big\}\cap Y_n=\0.\]
Why? If for every $m\in a_{n+1}\setminus a_n$ there is an $s_m\in S(n+1)$ such that $s_m(m)\in Y_n$ then there is a set $H\in [a_{n+1}\setminus a_n]^{n+2}$ such that $s_m=s$ does not depend on $m\in H$ (because $|a_{n+1}\setminus a_n|>(n+2)2^{n+1}$ and $|S(n+1)|\leq 2^{n+1}$). But it would mean that $H\subseteq \dom(s)$ and $|s[H]|\leq |Y_n|=n+1$ which is a contradiction because $s$ is injective.

We also want to fix a $y_{n+1}\in a_{n+1}\setminus a_n$ such that
$y_{n+1}\ne s(x_k)$ for any $k\leq {n+1}$, $s\in S(n+1)$ if $x_k\in\dom(s)$. The set of forbidden values is of size at most $2^{n+1}(n+2)$ hence there is such a $y_{n+1}$.
\end{proof}

In the diagram below, we summarize logical implications between classical properties of forcing notions and the ones we defined above. We will show that arrows without an $\ast$ above them are strict (i.e. not equivalences), and that there are no other implications between these properties. The arrow $\rDashto$ with question mark means that we do not know whether this implication holds (but the reverse implication is false). Of course, $\mbb{C}$ stands for the Cohen forcing, $\mbb{B}$ is the random forcing, and to keep the diagram small, we did not put ``$\PP$ adds \dots'' and ``$\PP$ has the \dots'' before the properties we deal with.

\begin{diagram}
\mbb{C}\text{-reals} & & & & \text{dom. reals} &&&&\\
 & \rdTo^{\ast} & && \vLine &\rdTo&&&\\
&& \text{inj. mixing} &&\HonV &\rTo& \text{unb. reals}&&\\
&& &\rdTo&\dTo&&&&\\
\mbb{B}\text{-reals}& \hLine & \VonH& \rTo & \text{mix. reals} & \hLine& \VonH & \rTo^{\ast} & \text{spl. real} \\
&\rdTo&\dTo&&& \rdDashto^{\text{?}} & \dTo&\\
&& \neg\text{Laver prop.} & & \rTo& & \neg\text{Sacks prop.} &\\
\end{diagram}

\medskip
The non-trivial non-implications in the diagram are the following:

\begin{itemize}
\item $\neg$Laver prop. $\nrightarrow$ splitting reals: The infinitely equal forcing $\mbb{EE}$ is $\om^\om$-bounding, preserves P-points (hence cannot add splitting reals), and $\vd_{\mbb{E}}$``$2^\om\cap V$ is a null set'' (see \cite[Lemma 7.4.13-15]{BaJu}). $\mbb{EE}$ cannot have the Laver property because otherwise it would have the Sacks property as well but then it could not force $2^\om\cap V$ to be of measure zero (it follows from e.g. \cite[Thm. 2.3.12]{BaJu}).
\item unbounded reals $\nrightarrow$ splitting reals: The Miller forcing (see \cite[7.3.E]{BaJu}).
\item spl. reals $\nrightarrow$ $\neg$Sacks prop.: The Silver forcing adds splitting reals (see \cite[Lemma 2.3]{gentle}) and it is straightforward to show that it satisfies the Sacks property.
\end{itemize}

We list the remaining questions in the next section.

\section{Related questions}\label{relques}

We already presented $\Ubf{\Sigma}^1_n$- and $\Ubf{\Pi}^1_n$-complete ideals but our construction was pretty artificial.

\begin{que}
Can we define ``natural'' $\Ubf{\Sigma}^1_n$- and $\Ubf{\Pi}^1_n$-complete ideals?
\end{que}

\begin{que}\label{strref}
Assume that $V,W$ and $\mc{I}$ are as in Theorem \ref{main}. Does there exist an $(\mc{I},\mrm{Fin})$-ADR of $\mc{I}^+\cap V$ in $W$? Or at least an $\mc{I}$-ADR $\{A_X:X\in \mc{I}^+\cap V\}\in W$ such that for every distinct $X,Y\in\mc{I}^+\cap V$ (using the notiations from the proof of Theorem \ref{main}) there is a $B_{X,Y}\in\mc{I}\cap V$ such that $A_X\cap A_Y\subseteq B_{X,Y}$?
\end{que}

\begin{que}
Does there exist a non-meager ideal $\mc{I}$ (in $\mrm{ZFC}$) such that there are perfect $(\mc{I},\mrm{Fin})$-AD families on every $X\in\mc{I}^+$?
\end{que}

In Example \ref{strangeideal}, assuming that there is a $\Ubf{\Delta}^1_2$ ultrafilter, we constructed a $\Ubf{\Delta}^1_2$ ideal $\mc{I}$ such that every $\mc{I}$-AD family is countable but $\mc{I}$ is nowhere maximal.

\begin{que}\label{ufbolsigma}
Is it consistent that there are no $\Ubf{\Delta}^1_2$ ultrafilters but there is a $\Ubf{\Sigma}^1_2$ ideal $\mc{I}$ such that every $\mc{I}$-AD family is countable but $\mc{I}$ is nowhere maximal?
\end{que}

A remark to Question \ref{ufbolsigma}: We know (see \cite[Thm. 9.3.9 (2)]{BaJu}) that if there are no dominating reals over $L[r]$ for any $r\in \om^\om$, then there is a $\Sigma^1_2$ unbounded hence non-meager filter. If every $\Ubf{\Delta}^1_2$ set is Lebesgue measurable or has the Baire property, then there are no $\Ubf{\Delta}^1_2$ ultrafilters. For instance,  these conditions above hold  in the Cohen and random models over $V=L$ (see \cite[Thm. 9.2.1]{BaJu}). In these models a non-meager $\Sigma^1_2$ ideal $\mc{I}$ must be nowhere maximal (otherwise a restriction of $\mc{I}$ would be a $\Delta^1_2$ prime ideal). It would be interesting to know the possible sizes of $\mc{I}$-AD families in these models.

\begin{que}
Is it consistent that for some (tall) Borel (P-)ideal $\mc{I}$ a family $\mc{H}\in [\mc{I}^+]^{<\mf{c}}$ does not have an $(\mc{I},\mrm{Fin})$-ADR (i.e. $F^*(\mc{I})<\mf{c}$)?
\end{que}

\begin{que}\label{quemix}
Does adding mixing injections imply adding Cohen reals?
\end{que}

\begin{que}
Does the Sacks property of a forcing notion imply that it does not add mixing reals?
\end{que}

Proposition \ref{mixchar} motivates the following notion: Let $n\geq 2$. We say that a forcing notion adds an {\em $n$-splitting partition}, if there is a partition $(Y_k)_{k<n}$ of $\om$ into infinite sets in $V^\PP$ such that $|X\cap Y_k|=\om$ for every $X\in [\om]^\om\cap V$ and $k<n$. In particular, adding $2$-splitting partitions is the same as adding splitting reals, and adding $\om$-splitting (infinite splitting) partitions is equivalent to adding mixing reals.

It is easy to see that if $\PP$ adds a splitting real then the $n$ stage iteration of $\PP$ adds a $2^n$-splitting partition. In fact, splitting reals and $n$-splitting partitions cannot be separated in terms of cardinal invariants. Let us denote $\mf{s}_n$ ($2\leq n<\om$) the least size of a family $\mc{S}_n$ of partitions of $\om$ into $n$ many infinite sets such that
\[\tag{$\ast$} \forall\;X\in [\om]^\om\;\exists\;P=(P_k)_{k<n}\in\mc{S}_n\;\forall\;k<n\;|X\cap P_k|=\om.\]
Of course, this definition makes sense for $n=\om$ as well but $\mf{s}_\om$ stands for an already defined and studied cardinal invariant. To avoid confusions, let us denote this cardinal by $\mf{s}_\mrm{mix}$.

Then $\mf{s}_n=\mf{s}=\mf{s}_2$ for every $2\leq n<\om$. For the non-trivial direction, assume that we have a family $\mc{S}$ of splitting partitions of size $\mf{s}$ and consider all possible ``$(n-1)$-long iterated nestings'' of these partitions. For example, if $n=3$ then to every pair $(P=(P_0,P_1),Q=(Q_0,Q_1))$ of partitions from $\mc{S}$ we associate a partition of $\om$ into three infinite sets as follows: Let $e_0:\om\to Q_0$ be the increasing bijection and take the partition $(e_0[P_0],e_0[P_1],Q_1)$. We obtain $\mf{s}^{n-1}=\mf{s}$ many partitions of $\om$ into $n$ many infinite sets, the family $\mc{S}_n$ of these partitions satisfies $(\ast)$, and hence $\mf{s}_n\leq\mf{s}$.

\begin{que}
Does adding $n$-splitting partitions ($2\leq n<\om$) imply adding $(n+1)$-splitting partitions?
\end{que}

\begin{que}
Is $\mf{s}_\mrm{mix}=\mf{s}$? Does adding splitting reals (or $n$-splitting partitions for every $n$) imply adding mixing reals? What can we say about the Silver forcing? (It is straightforward to see that it adds $n$-splitting partitions for every $n$.)
\end{que}

\end{document}